\newcommand{\basetheorem}[3]{%
    \newtheorem{#1}{#2}[#3]
    \newtheorem*{#1*}{#2}
    \expandafter\def\csname #1autorefname\endcsname{#2}
}%
\newcommand{\maketheorem}[3]{%
    \newaliascnt{#1}{#3}
    \newtheorem{#1}[#1]{#2}
    \aliascntresetthe{#1}
    \expandafter\def\csname #1autorefname\endcsname{#2}
    \newtheorem{#1*}{#2}
}%
\theoremstyle{plain}   
\newtheorem*{mainthm*}{Main Theorem}
\newtheorem*{theoremA*}{Theorem A}
\newtheorem*{theoremB*}{Theorem B}
\theoremstyle{definition}    
\theoremstyle{remark}    
\numberwithin{equation}{theorem}
\DeclareMathOperator{\Gal}{{Gal}}
\DeclareMathOperator{\Exc}{{Exc}}
\DeclareMathOperator{\Bl}{{Bl}}
\DeclareMathOperator{\Cl}{{Cl}}
\DeclareMathOperator{\chern}{ch}
\DeclareMathOperator{\td}{td}
\DeclareMathOperator{\length}{{length}}
\DeclareMathOperator{\Spf}{{Spf}}
\DeclareMathOperator{\Stab}{{Stab}}
\DeclareMathOperator{\st}{\mathrm{st}}
\DeclareMathOperator{\utd}{\mathrm{utd}}
\DeclareMathOperator{\Spec}{{Spec}}
\DeclareMathOperator{\Aut}{{Aut}}
\DeclareMathOperator{\ord}{{ord}}
\DeclareMathOperator{\Image}{Image}
\DeclareMathOperator{\Char}{{char}}
\DeclareMathOperator{\Hom}{Hom}
\newcommand{\kay}{\mathcal{k}}
\newcommand{\sA}{\mathcal{A}}
\newcommand{\sE}{\mathcal{E}}
\newcommand{\sI}{\mathcal{I}}
\newcommand{\sJ}{\mathcal{J}}
\newcommand{\sK}{\mathcal{K}}
\newcommand{\sM}{\mathcal{M}}
\newcommand{\sN}{\mathcal{N}}
\newcommand{\sO}{\mathcal{O}}
\newcommand{\sT}{\mathcal{T}}
\newcommand{\sX}{\mathcal{X}}
\newcommand{\sY}{\mathcal{Y}}
\newcommand{\sZ}{\mathcal{Z}}
\newcommand{\fram}{\mathfrak{m}}
\newcommand{\bA}{\mathbb{A}}
\newcommand{\bD}{\mathbb{D}}
\newcommand{\bE}{\mathbb{E}}
\newcommand{\bL}{\mathbb{L}}
\newcommand{\bN}{\mathbb{N}}
\newcommand{\bP}{\mathbb{P}}
\newcommand{\bQ}{\mathbb{Q}}
\newcommand{\bZ}{\mathbb{Z}}
\newcommand{\cf}{{\itshape cf.} }
\begin{document}
\title[Stringy motives under Galois quasi-\'etale covers]{On the behavior of stringy motives under Galois quasi-\'etale covers} 
\author[J.~Carvajal-Rojas]{Javier Carvajal-Rojas}
\address{Centro de Investigaci\'on en Matem\'aticas, A.C., Callej\'on Jalisco s/n, 36024 Col. Valenciana, Guanajuato, Gto, M\'exico}
\email{\href{mailto:javier.carvajal@cimat.mx}{javier.carvajal@cimat.mx}}

\author[T.~Yasuda]{Takehiko Yasuda}
\address{Department of Mathematics, Graduate School of Science, Osaka University, Toyonaka, Osaka, 560--0043, Japan} 
\email{\href{mailto:}{yasuda.takehiko.sci@osaka-u.ac.jp}}

\keywords{Kawamata log-terminal, stringy motive, \'etale fundamental group.}
\thanks{The first named author was partially supported by the grants ERC-STG \#804334, FWO Grant \#G079218N, CONAHCYT \#CBF2023-2024-224, and CONAHCYT \#CF-2023-G-33. The second named author was partially supported by JSPS KAKENHI Grant numbers 18K18710, 18H01112, and 16H06337.}
\subjclass[2020]{14E18, 14E30, 14E20, 14B05}

\begin{abstract}
We investigate the behavior of stringy motives under Galois quasi-\'etale covers. We prove that they descend under such covers in a sense defined via their Poincar\'e realizations. Further, we show that such descent is strict in the presence of ramification. As a corollary, we reduce the problem regarding the finiteness of the \'etale fundamental group of KLT singularities to a DCC property for their stringy motives. We verify such DCC property for surfaces in arbitrary characteristic. As an application, we give a characteristic-free proof for the finiteness of the \'etale fundamental group of log terminal surface singularities, which was unknown in equal characteristics $2$ and $3$ and in mixed characteristics. 
\end{abstract}
\maketitle

\section{Introduction} \label{sec.Introduction}

Kawamata log terminal singularities (KLT for short) are arguably the most important class of singularities in algebraic geometry. For instance, they are the gold standard for mild log canonical singularities and are the singularities required to run the Minimal Model Program. Hence, there has been a great effort in the last decades to understand how mild these singularities actually are. In general, much is known over fields of characteristic zero, but the situation over positive characteristic fields---let alone mixed characteristics---is rather thorny. A typical example of this is their rationality. We know that KLT singularities are rational in characteristic zero thanks to vanishing theorems such as Kodaira vanishings and their generalizations. However, they are not rational in general over positive characteristic fields and it is yet to be determined when exactly they are rational. Another but related problem has to do with the purity of the branch locus over KLT singularities, or, to be more precise, with the finiteness of their local \'etale fundamental groups. In this note, we study this problem by means of stringy motives; which is an invariant of KLT singularities amalgamating their log discrepancies with (generalized) Euler characteristics. Next, we recall what the problem inspiring this work is. 

Let $(X,\Delta)$ be a log pair over an algebraically closed field $\kay$ of characteristic $p\geq 0$ and $K_X$ be a canonical divisor on $X$. That is, $X$ is a normal $\kay$-variety and $\Delta$ is a $\bQ$-divisor on $X$ with coefficients in $[0,1]$ such that $K_{(X,\Delta)} \coloneqq K_X + \Delta$ is a $\bQ$-Cartier $\bQ$-divisor. We may also refer to $\Delta$ as a boundary on $X$. Let $r_{(X,\Delta)}$ denote the index of $(X,\Delta)$, i.e., the minimal positive integer $r$ such that $rK_{(X,\Delta)}$ is Cartier. A morphism $g\colon (X',\Delta') \to (X,\Delta)$ between log pairs is said to be a \emph{Galois quasi-\'etale log-cover} if $g$ is a finite dominant morphism such that $K_{X'}= g^*K_X$, $\Delta' = g^*\Delta$, and the extension of function fields $\sK(X')/\sK(X)$ is Galois. In such a case, $g$ is crepant and $r_{(X',\Delta')}$ divides $r_{(X,\Delta)}$. 

This work is concerned with the following fundamental problem.

\begin{question} \label{que.questionIntro.}
Assume that $(X,\Delta)$ is a Kawamata log terminal log pair and consider a tower of Galois quasi-\'etale log-covers
\begin{equation}\label{eqn.tower}
(X, \Delta) = (X_0, \Delta_0) \xleftarrow{f_0} (X_1, \Delta_1) \xleftarrow{f_1} (X_2, \Delta_2) \xleftarrow{f_2} \cdots
\end{equation} 
Does there exist $N \in \bN$ such that $f_i$ is \'etale for all $i\geq N$?
\end{question}
\begin{remark}
In \autoref{que.questionIntro.}, the compositions $g_n \coloneqq f_n \circ \cdots \circ f_0$ are the ones being required to be Galois quasi-\'etale log-covers (not the $f_i$'s), which then implies that the maps $f_i$ are Galois quasi-\'etale. Further, the Galois hypothesis is essential; see \cite[Proposition 11.4]{GrebKebekusPeternellEtaleFundamental}. Additionally, if $(X,\Delta)$ is KLT then so is $(X_i,\Delta_i)$ for all $i$; see \cite[Corollary 2.43]{KollarSingulaitieofMMP}.
\end{remark}

\autoref{que.questionIntro.} has attracted great attention in recent years; see \cite{GrebKebekusPeternellEtaleFundamental, XuFinitenessOfFundGroups, BhattCarvajalRojasGrafSchwedeTucker}. It is well-known that \autoref{que.questionIntro.} is intimately related to the problem of determining the finiteness of the (regional) local \'etale fundamental group of KLT singularities; see \cite{StibitzFundamentalGroups}. This ``twin'' problem has been considered in the works \cite{XuFinitenessOfFundGroups, CarvajalSchwedeTuckerEtaleFundFsignature,CarvajalRojasStablerKollarFundamentalGroupKLTthreefolds,BraunFundamentalGroupsKLT,BraunFilipazziMoragaSvaldiJordanProperty,CaiLeeTuckerSchwedePerfectoidSignature}, \cf \cite[Corollary 1.9]{KawamataTheConeOfCurves} which settles this for surfaces in characteristic zero. Despite these efforts, \autoref{que.questionIntro.} remains open in positive characteristics. Nonetheless, we know its answer to be affirmative for $F$-regular pairs (in full generality) as well as in dimensions $\leq 3$ but characteristics $\geq 7$; see \cite{BhattCarvajalRojasGrafSchwedeTucker,CarvajalRojasStablerKollarFundamentalGroupKLTthreefolds}. In dimension $2$, we may add $p=5$ to the known cases by putting together \cite{KawamataIndex1CoversOfLogTerminalSurfaceSingularities,ArtinCoveringsOfTheRtionalDoublePointsInCharacteristicp}; see \autoref{rem.FundamentalGropuSurfaceCase}. 

It is worth noting that, in dimensions $\leq 3$, \autoref{que.questionIntro.} can be answered in the tame case due to recent advances in the minimal model program; see \cite[Proposition 5.2]{HaconWitaszekMMPLowCharacteristic}, \cf \cite[Theorem 3.4]{XuZhangNonvanishing}.
Note also that the local \'etale fundamental group of an $F$-regular singularity as well as the one of a KLT singularity in dimension $3$ and characteristic $\ge 7$ is not only finite but tame as shown in \cite{CarvajalSchwedeTuckerEtaleFundFsignature,CarvajalRojasStablerKollarFundamentalGroupKLTthreefolds}. On the other hand, even in dimension two, there are rational double points (hence KLT singularities) having no non-trivial tame quasi-\'etale cover, equivalently, whose local \'etale fundamental group has no non-trivial tame quotient \cite[p.15]{ArtinCoveringsOfTheRtionalDoublePointsInCharacteristicp}. In general, the wild aspects in low characteristics make these problems much more difficult. For instance, Kawamata's method using index-$1$ covers does not apply to characteristics $\le 3$ in dimension $2$ as log terminal singularities are not preserved by index-$1$ covers. Thus, a new approach is needed in this case which we provide here. 

In this work, we propose a novel strategy to attack \autoref{que.questionIntro.} via \emph{stringy motives}; at least in dimensions $\leq 3$ where resolution of singularities and ($W\sO$-)rationality of KLT singularities are available. Loosely speaking, stringy motives are a hybrid between log discrepancies and Euler characteristics and are well-suited to the study of wild quotients.

We sketch our approach next. Following \cite[\S 6]{YasudaPcyclicMackayCorrespondenceMotivicIntegration}, we consider the stringy motive 
\[
M_{\mathrm{st}}(X,\Delta) \coloneqq \int_{J_{\infty}X}\bL^{F_{(X,\Delta)}} \mathrm{d}\mu_X \in \hat{\sM}'_{\kay,r_{(X,\Delta)}}
\]
of a log pair $(X,\Delta)$ which, by definition, is convergent (i.e., exists) if and only if $(X,\Delta)$ is \emph{stringily KLT}. Such pairs form a subclass of KLT log pairs. Conversely, a KLT log pair is stringily KLT if it has a log resolution. Hence, these two notions agree in characteristic zero and in dimensions $\leq 3$ in positive characteristics. 

Then, our goal is to examine the behavior of stringy motives under Galois quasi-\'etale log-covers $g\colon (X',\Delta') \to (X,\Delta)$. Let $G \coloneqq \Gal\big(\sK(X')/\sK(X)\big)$ be the corresponding Galois group. To do so, we are going to require a notion of ordering among stringy motives, for which we use the \emph{Poincar\'e realization}, which is a ring homomorphism:
\[
P \colon \hat{\sM}'_{\kay,r} \to \bZ\llparenthesis T^{-1/r} \rrparenthesis
\]
where $r \coloneqq r_{(X,\Delta_X)}$ and $\bZ\llparenthesis T^{-1/r} \rrparenthesis$ is the ring of integral Laurent series on $T^{-1/r}$. Thus, to a stringy motive we may associate its Poincar\'e realization $P_{\mathrm{st}}(X,\Delta_X) \coloneqq P\big(M_{\mathrm{st}}(X,\Delta_X);T \big) \in \bZ\llparenthesis T^{-1/r} \rrparenthesis$. For instance, in the surface case, any stringy motive we consider here will belong to the subring $\bZ \llparenthesis \bL^{-1/r} \rrparenthesis \subset \hat{\sM}'_{\kay,r}$ on which the Poincaré realization map is given by $\bL \mapsto T^2$ (where $\bL \coloneqq \{\bA^1\}$)

We obtain an ordering on $\bZ\llparenthesis T^{-1/r} \rrparenthesis$ as follows. For $0 \neq f \in \bZ\llparenthesis T^{-1/r} \rrparenthesis$, $f>0$ if and only if for $f = \sum_{i=-n}^{\infty} a_i T^{-i/r}$ we have $a_{-n}>0$. This is none other than the lexicographic ordering. We lift this ordering to the ring $\hat{\sM}'_{\kay,r}$ via the Poincar\'e realization. For instance, $M_{\mathrm{st}}(X,\Delta_X) > 0$ means $P_{\mathrm{st}}(X,\Delta_X)>0$. In general, we shall say that an element of $\hat{\sM}'_{\kay,r}$ (e.g. motivic measures and integrals) is positive if so is its Poincaré realization.

Since $(Y,\Delta_Y)$ admits a log $G$-action (with log quotient $(X,\Delta_X)$) so does $M_{\mathrm{st}}(Y,\Delta_Y)$. Then, we may form the quotient $M_{\mathrm{st}}(Y,\Delta_Y)/G$; see \autoref{sec.StringyMotives}. Our first observation is the equality:
\[
M_{\mathrm{st}}(Y,\Delta_Y)/G = \int_{g_{\infty}(J_{\infty} Y)}{\bL^{F_{X,\Delta_X}}}\mathrm{d}\mu_X ,
\]
where $g_{\infty} \colon J_{\infty} Y \to J_{\infty} X$ is the induced morphism on the spaces of arcs; see \autoref{cor.KeyCorollary}. Consequently, $M_{\mathrm{st}}(X,\Delta_X)$ and $M_{\mathrm{st}}(Y,\Delta_Y)/G $ can be compared as follows:
\begin{align*}
    M_{\mathrm{st}}(X,\Delta_X) = \int_{J_{\infty} X}{\bL^{F_{X,\Delta_X}}}\mathrm{d}\mu_X  &= \int_{g_{\infty}(J_{\infty} Y)}{\bL^{F_{X,\Delta_X}}}\mathrm{d}\mu_X  +\int_{J_{\infty} X \setminus g_{\infty}(J_{\infty} Y)}{\bL^{F_{X,\Delta_X}}}\mathrm{d}\mu_X \\
    &= M_{\mathrm{st}}(Y,\Delta_Y)/G + \int_{J_{\infty} X \setminus g_{\infty}(J_{\infty} Y)}{\bL^{F_{X,\Delta_X}}}\mathrm{d}\mu_X .
\end{align*}
Our main result is then the following.
\begin{mainthm*}[\autoref{thm.StringyInvariantGoesDown}]
With notation as above, assume that $(X,\Delta_X)$ is a KLT pair of dimension $\leq 3$ and that $g$ is not \'etale. Further, assume $G$ to be a $p$-group if $d=3$ and $0<p \leq 5$. Then, the motivic integral $\int_{J_{\infty} X \setminus g_{\infty}(J_{\infty} Y)}{\bL^{F_{X,\Delta_X}}}\mathrm{d}\mu_X $ is positive and so:
\[
 M_{\mathrm{st}}(X,\Delta_X) > M_{\mathrm{st}}(Y,\Delta_Y)/G.
\]
\end{mainthm*}

In particular, if there were a non-affirmative answer to \autoref{que.questionIntro.} in any of the following cases:
\begin{enumerate}
    \item $\dim X = 2$ and $p \geq0$,
    \item $\dim X = 3$ and $p \notin \{2,3,5\}$, 
    \item $\dim X =3$, $p \in \{2,3,5\}$, and $\Gal(X_i/X)$ is a $p$-group for all $i \geq 0$,
\end{enumerate}
then, the corresponding tower \autoref{eqn.tower} would yield a strictly descending chain:
\[
M_{\mathrm{st}}(X_0, \Delta_0)/G_0 > M_{\mathrm{st}}(X_1, \Delta_1)/G_1 > M_{\mathrm{st}}(X_2, \Delta_2)/G_2 > \cdots
\]
where $G_i \coloneqq \Gal\big(K(X_i)/(X_0)\big)$. Hence, we reach a contradiction if we can prove a descending chain condition (DCC) for stringy motives in those cases. We are able to achieve this in case (a); see \autoref{prop:DCC}, but leave it open in the remaining two cases. Noteworthy, a DCC property for another version of stringy invariants was discussed by Takahashi; see \cite{TakahashiDCCStringyInvariants}. 

As an application, we give a characteristic-free proof for the finiteness of the local \'etale fundamental group of KLT surface singularities; see \autoref{thm.ApplicationFundamentalGroups}. Of course, if the (purely wild) DCC condition holds for threefolds, the same would hold for KLT threefold singularities.

In proving our main theorem, the main technical step is showing that $J_{\infty} X \setminus g_{\infty}(J_{\infty} Y)$ has positive measure if $g$ is not \'etale. To this end, we have improved upon results of Kato and Kerz--Schmidt; see \cite[Lemma 3.5]{KatoWildRamificationRestrictionCurves}, \cite[Lemma 2.4]{KerzSchmidtOnDifferentNotionsOfTameness}. Our argument was inspired by those of Nakamura--Shibata in \cite{NakamuraShibataInversionOdAdjunctionForQuotientSingularities}. We let $\bD$ denote the formal disk $\Spec \kay \llbracket t \rrbracket$ and $\delta , \eta \in \bD$ denote its closed and generic points, respectively.

\begin{theoremA*}[\autoref{Cor.AbundanceNonLiftableArcs}]
Let $G$ be a finite group and $H\subset G$ be a subgroup. Let $g\colon Y\to X$ be a $G$-cover between normal varieties such that $Y$ is smooth and there is $y \in Y(\kay)$ being fixed by $H$. Let  $\bE\to \bD$ be another $G$-cover such that $H$ fixes some connected component $\bE_0$ of $\bE$ (i.e. $\bE_0 \to \bE$ is $H$-equivariant). Let $\sN \subset J_{\infty} X$ be the subset of arcs $\gamma \colon \bD\to X$ such that: $\gamma(\delta) = g(y)$,  $g$ is \'etale over $\gamma(\eta)$, and the pullback of $\gamma$ along $g$ induces $\bE\to \bD$. Then, $\sN$ has positive measure. 
\end{theoremA*}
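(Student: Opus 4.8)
The plan is to estimate the measure of $\sN$ directly via the change-of-variables formula on the arc space, using the smoothness of $Y$ together with the structure of the $G$-cover along $\bE_0$. First I would reduce to a local computation: since $\delta \mapsto g(y)$ is prescribed and $g$ is étale over the generic point of each arc in $\sN$, work in a $G$-stable affine étale chart around $y$ where $Y \to X$ is the quotient by $G$ (so $X = Y/G$ near $g(y)$, with stabilizer $H = G_y$). Because $Y$ is smooth at $y$ we may choose $G_y$-equivariant local coordinates at $y$; the $G$-action on the completed local ring $\widehat{\sO}_{Y,y}$ is linearizable (or at least we only need a handle on the ramification filtration of the induced action, which is where the Kato / Kerz--Schmidt-type input enters). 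Arcs in $\sN$ correspond, after the prescribed base change $\bE \to \bD$, to $H$-equivariant arcs $\widetilde\gamma \colon \bE_0 \to Y$ with $\widetilde\gamma(\delta_{\bE_0}) = y$ and $\widetilde\gamma(\eta_{\bE_0})$ landing in the locus where $g$ is étale; the latter is the complement of the ramification divisor, hence a nonempty open condition on jets.

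The key step is then to show that the set of such $H$-equivariant arcs forms a cylinder (or a countable union of cylinders with a positive-measure piece) in $J_\infty Y$, and to track how the measure transforms under $g_\infty$. I would argue: (i) the $H$-equivariant arcs of $Y$ through $y$ inducing $\bE_0 \to \bD$ form a nonempty constructible — indeed cylindrical — subset $\widetilde\sN \subset J_\infty Y$, because smoothness of $Y$ makes $J_\infty Y \to Y$ a (pro-)affine-space bundle and the $H$-equivariance plus the fixed base-change cut out a sub-bundle of positive-dimensional fibers at the appropriate jet levels (the condition ``$g$ étale over $\gamma(\eta)$'' only removes the proper closed locus where the arc is tangent to the ramification divisor to infinite order, which has measure zero); (ii) the map $g_\infty$ restricted to $\widetilde\sN$ is finite-to-one onto $\sN$ (the fibers are the different connected components of $\bE$ that $H$ could preserve, i.e. bounded by $[G:H]$ or so), and by the change-of-variables / Kontsevich transformation rule the pushforward of the measure picks up a factor $\bL^{-\ord_t \Jac(g)\circ\widetilde\gamma}$ which is bounded on $\widetilde\sN$ since $g$ is étale along the relevant generic points and the order of the different/Jacobian is bounded in terms of the ramification data of $\bE_0$; therefore $\mu_X(\sN) \geq c \cdot \mu_Y(\widetilde\sN) > 0$.

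The main obstacle I expect is step (i) combined with the measure bound in (ii): making precise that the $H$-equivariance condition relative to a \emph{fixed} choice of component $\bE_0$ carves out something of positive measure rather than something contained in a measure-zero locus, and controlling $\ord_t$ of the Jacobian of $g$ uniformly. This is exactly the place where the improvement on Kato's Lemma 3.5 and the Kerz--Schmidt Lemma 2.4, in the spirit of Nakamura--Shibata, is needed: one needs that for the prescribed ramification type of $\bE_0 \to \bD$ there \emph{exist} arcs realizing it with the expected (minimal) ramification jump, and that the locus of such arcs is cylindrical of the right codimension. Once this cylindricity and the boundedness of the Jacobian order are in hand, positivity of $\mu_X(\sN)$ follows formally from the motivic change-of-variables formula and the fact that cylinders defined by nonempty jet conditions have positive measure. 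I would organize the write-up so that the hard ramification-theoretic lemma is isolated first, and then $\sN$'s positivity is deduced as a short corollary.
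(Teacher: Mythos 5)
There is a genuine gap at the heart of your steps (i) and (ii). The set $\widetilde{\sN}$ of $H$-equivariant maps $\bE_0\to Y$ through $y$, viewed inside $J_\infty Y$ via a choice of uniformizer of $\bE_0$, is \emph{not} a cylinder and in fact has measure zero in $J_\infty Y$ whenever $\bE_0\to\bD$ is genuinely ramified: equivariance imposes a nontrivial closed condition at every jet level, so the codimension of its truncations grows without bound. Already for $Y=\bA^1$, $G=H=\ZZ/2$ (with $p\neq 2$), $X=Y/G$, and $\bE_0=\Spec \kay\llbracket t^{1/2}\rrbracket$, the equivariant arcs are exactly the odd power series in $s=t^{1/2}$, a measure-zero subset of $J_\infty Y$, while $\sN$ (arcs of $X$ with odd contact order at the origin) visibly has positive measure. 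Moreover, composing an equivariant arc with $g$ yields a map $\bE_0\to X$, not an arc $\bD\to X$: passing to $\sN$ requires descent along the ramified cover $\bE_0\to\bD$, i.e. a reparametrization from $s$ to $t$, so this correspondence is not the restriction of $g_\infty$ (indeed, arcs in $\sN$ do not lift to $Y$ at all, hence are not in the image of $g_\infty$), and the ordinary change-of-variables formula for the finite morphism $g$ does not apply to it. Consequently the proposed bound $\mu_X(\sN)\ge c\cdot\mu_Y(\widetilde{\sN})$ is either vacuous (the right-hand side is zero) or unjustified, and no bound on the Jacobian order can repair this, since the failure is in which space the equivariant arcs are being measured.

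This is precisely the issue the paper's proof is built to avoid, and it uses different machinery: the $\bE$-twisted $G$-arcs are re-parametrized as honest sections $\bD\to Y_\bD^{|\bE|}$ of the untwisting scheme, whose generic fiber is smooth over $\bD^*$ because it is a twisted form of $Y\times\bD^*$. Nonemptiness of $J_\infty^{\bE,G}Y_\bD$ is supplied by the constant (``trivial'') arc at the $H$-fixed point $y$, extended $G$-equivariantly from $\bE_0$ to $\bE$; this is the only place where $y$ and $\bE_0$ enter. Positivity of measure is then obtained not from any cylindricity of the equivariant locus but from a N\'eron smoothening $W\to Y_\bD^{|\bE|}$: the map $J_\infty W\to J_\infty Y_\bD^{|\bE|}$ is bijective and $J_\infty W$ is a nonempty cylinder of positive measure because $W$ is smooth over $\bD$. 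One then pushes forward to $J_\infty X$ through the quotient stack $[Y_\bD^{|\bE|}/\Aut(\bE)]$ using the change of variables for formal schemes and Deligne--Mumford stacks; no uniform bound on the Jacobian order is needed, since one stratifies by its value, discards the measure-zero locus where it is infinite, and keeps a single stratum of positive measure. If you wish to salvage your outline, the equivariant-arc locus must be measured on the untwisting scheme (or in an equivalent equivariant motivic-integration formalism), not inside $J_\infty Y$.
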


We have discussed so far only the equal characteristic case, where we have a suitable theory of motivic integration. Motivic integration makes the proof of our Main Theorem rather formal. Unfortunately, such a theory is not available yet in mixed characteristics. Nevertheless, at least for surfaces, we may define the required stringy motives directly via minimal resolutions and we may rely on the strong factorization theorem. In \autoref{sec.Mixed_characteristic_surface case}, we prove our Main Theorem in this case as well without relying on motivic integration. See \autoref{thm.MainTheoremMixChar}. This lets us prove the finiteness of the \'etale fundamental group of log terminal surface singularities in all equal/mixed characteristics.

\begin{theoremB*}[\autoref{cor.FinitenessFUndGrouosMixChar}]
Let $(R,\fram,\kay)$ be a log terminal $2$-dimensional complete local ring with algebraically closed residue field $\kay$. Then, $\pi_1^{\mathrm{\Acute{e}t}}(\Spec R\setminus \{\fram\})$ is finite.
\end{theoremB*}

\subsection*{Acknowledgments} We thank Maciej Zdanowicz for all the technical support he kindly  provided to us regarding $W\sO$-rationality. We are also very thankful to Fabio Bernasconi who pointed out to us Kawamata's work \cite{KawamataIndex1CoversOfLogTerminalSurfaceSingularities} on index-$1$ covers of log terminal surface singularities. The authors are grateful to Richard Crew and Anurag Singh for answering some of our questions by email. The authors also thank Yusuke Nakamura for sending us a copy of the paper \cite{NakamuraShibataInversionOdAdjunctionForQuotientSingularities} which was very helpful in this work. The authors started working on this project during the second named author's visit to the Chair of Algebraic Geometry at the EPFL and during the first named author's visit to Tohoku University to attend the conference ``Singularities and Arithmetics,'' which was supported by JSPS KAKENHI Grant numbers 18K18710, 18H01112, and 16H06337. The authors are, respectively, very thankful to those institutions for their support and hospitality. Finally, we would like to thank the anonymous referees for their thoughtful feedback, which helped us improve this article. 

\begin{convention}
We fix an algebraically closed field $\kay$ of characteristic $p \geq 0$. Except for \autoref{sec.Mixed_characteristic_surface case}, every relative notion/object/operation is defined over $\kay$ unless otherwise explicitly stated. Also, $0 \in \bN$.
\end{convention}

\section{Stringy Motives} \label{sec.StringyMotivicInvaraints}
For the reader's convenience, we recall how to define the stringy motive $M_{\st}(X,\Delta)$ of a KLT log pair $(X,\Delta)$ as well as its main properties. We follow \cite{YasudaMotivicIntegrationDeligneMumfordStack}, where the reader can find full details. See also \cite[Chapter 6]{YasudaBook} for another reference that might be more friendly to non-specialists.

\subsection{Motivic integration}
We briefly review the theory of motivic integration over Deligne--Mumford stacks and in the equivariant setting. Motivic measures and integrals take values in a version of the Grothendieck ring of varieties. In this paper, we choose the one $\hat{\sM}'_{\kay,r}$ used in \cite{YasudaMotivicIntegrationDeligneMumfordStack}. The class of a variety $X$ in this ring is denoted by $\{X\}$ (square brackets $[\cdot]$ are reserved to express quotient stacks).
More generally, we can define the class $\{C\}$ of a constructible subset $C$ of a variety via its partition into locally closed subsets: if $C=\bigsqcup_{i=1}^nC_i$; with $C_i$ locally closed, then $\{C\}\coloneqq\sum_{i=1}^n\{C_i\}$. As usual, $\bL\coloneqq\{\bA^1\}$. The subscript $r \in \bN$ of $\hat{\sM}'_{\kay,r}$ means that the ring is adjoined with the fractional power $\bL^{1/r}$ of $\bL$. We often take a sufficiently divisible $r$ so that every rational number showing up sits in $\frac{1}{r}\bZ$. 

We work with the \emph{Poincar\'e realization} map $\hat{\sM}'_{\kay,r} \to \bZ\llparenthesis T^{-1/r}\rrparenthesis$, which sends $\{X\}$ to the Poincar\'e polynomial $ P(X)=P(X;T)$ of $X$; see \cite[\S8]{NicaiseTraceFormulaForVarietesOverADVF}. For example, $P(\bL)=T^2$. We define an order $>$ on $\bZ \llparenthesis T^{-1/r} \rrparenthesis$ by comparing coefficients lexicographically. That is, for two distinct elements $f,g\in \bZ \llparenthesis T^{-1/r} \rrparenthesis$, $f>g$ (resp. $f<g$) if and only if the leading coefficient (i.e., the coefficient of the highest degree term) of $f-g$ is positive (resp. negative). As usual, the
symbol $\le$ means either $<$ or $=$ (and likewise for $\ge$). 

\begin{proposition}\label{positive}
The Poincaré polynomial $P(X)$ of a variety $X$ is positive (i.e., $>0$ with respect to this
order) unless $X = \emptyset$. Moreover, for a countable family $\{X_i\}_{i \in \bZ}$ of varieties such that $X_i \ne \emptyset$ for some $i$, if a sum
$\sum_{i}P(X_{i})T^{e_{i}}$ with $e_{i}\in\frac{1}{r}\bZ$ is convergent in $\bZ \llparenthesis T^{-1/r} \rrparenthesis$, then it is positive.
\end{proposition}

\begin{proof}
Let $d$ be the dimension of $X$ and $c$ be the number of $d$-dimensional irreducible components of $X\otimes_{\kay} \bar{\kay}$. Then, $P(X)$ is a polynomial  of the form
\[
 c T^{2d} + (\text{terms of degree $<2d$}), 
\]
which shows the first assertion. 
As for the second assertion, from the assumption, at least one of the terms $P(X_i)T^{e_i}$ has degree $>-\infty$. Moreover, the convergence condition shows that the degrees of the terms are bounded above and the maximum is attained by finitely many terms, say $P(X_1)T^{e_1}, \dots, P(X_m)T^{e_m}$. 
Then, the leading coefficient of $P(X)$ is equal to the one of  $\sum_{i=1}^m P(X_i)T^{e_i}$, which is positive from the first assertion.
\end{proof}

\begin{definition}
We shall say that $\alpha \in \hat{\sM}'_{\kay,r}$ is \emph{positive} (i.e., $\alpha >0$) if $P(\alpha) >0$.
For two elements $\alpha, \beta \in \hat{\sM}'_{\kay,r}$, we write $\alpha > \beta$ if $P(\alpha)>P(\beta)$. 
\end{definition}

The positivity and the order defined above for elements of $\hat{\sM}'_{\kay,r}$ play an essential role in the proofs of our main results. Indeed, a key step in our arguments is to show $\alpha > \beta$ for  invariants $\alpha,\beta \in \hat{\sM}'_{\kay,r}$ associated with two singularities. This inequality is then proved by showing that $P(\alpha -\beta)$
is a countable sum of the form in Proposition \ref{positive}.

We denote the \emph{formal disk} by $\bD\coloneqq \Spec \kay\llbracket t \rrbracket $. We let $\delta,\eta \in \bD$ denote its closed and generic points; respectively. The \emph{punctured formal disk} is $\bD^* \coloneqq \Spec \kay \llparenthesis t \rrparenthesis=\bD\setminus \{\delta\}$. An \emph{arc} of a variety $X$ is a morphism $\bD \to X$. Given $n \in \bN$, an \emph{$n$-jet} of $X$ is a morphism $\Spec \kay [t]/(t^{n+1}) \to X$. The \emph{$n$-jet} scheme $J_n X$ of $X$ is the moduli scheme of $n$-jets of $X$, which is a separated scheme of finite type. For $n'\ge n$, there is a natural map $J_{n'} X\to J_n X$. The \emph{arc space} $J_\infty X$ is the moduli space of arcs of $X$ and is identified with the projective limit $\varprojlim J_n X$. For each $n \in \bN$, we have a \emph{truncation map} map $\pi _n \colon J_\infty X \to J_n X$. A subset $C \subset J_{\infty} X$ is said to be a \emph{stable} if there is $n \gg 0$ such that: $\pi_n(C)\subset J_nX$ is constructible, $C=\pi_n^{-1}\big(\pi_n(C)\big)$, and $\pi_{m+1}(C) \to \pi_m(C)$ is a piecewise trivial $\bA^{\dim X}_{\kay}$-bundle for all $m \geq n$. The arc space is equipped with the motivic measure $\mu_X$; for a stable subset $C\subset J_\infty X$, we write 
\begin{equation} \label{eqn.MeasureStableSubset}
\mu_X(C)\coloneqq \{\pi_n(C)\}\bL^{-n\dim X} \in \hat{\sM}'_{\kay,r},\quad n\gg 0.
\end{equation} 
Thus, we may say that a measurable subset $C\subset J_{\infty}X$ has
\emph{positive measure }if $P\big(\mu_{X}(C)\big)>0$.

If $X$ is endowed with an action of a finite group $G$ and if $C$ is a $G$-invariant stable subset, the same formula as in \autoref{eqn.MeasureStableSubset} gives an element of the $G$-equivariant version $G\textrm{-}\hat{\sM}'_{\kay,r}$ of $\hat{\sM}'_{\kay,r}$; which we denote by $\mu_X(C)$ too.
The ring $G\textrm{-}\hat{\sM}'_{\kay,r}$ is constructed from the Grothendieck ring of $G$-varieties (varieties given with a $G$-action) by the same procedure as the construction of $\hat{\sM}'_{\kay,r}$ from the Grothendieck ring of $\kay$-varieties. Using the forgetful map $G\textrm{-}\hat{\sM}'_{\kay,r} \to \hat{\sM}'_{\kay,r}$, the equivariant version of $\mu_X(C)$ maps to its non-equivariant version. However, there is a quotient map $G\textrm{-}\hat{\sM}'_{\kay,r} \to \hat{\sM}'_{\kay,r}$, which extends $\{X\} \mapsto \{X/G\}$ (e.g. for $X$ quasi-projective). The image of $\alpha \in G\textrm{-}\hat{\sM}'_{\kay,r}$ under this map is denoted by $\alpha/G$. This lets us define $\mu_X(C)/G$.

Let $G$ be a finite abstract group. A $G$-cover of $\bD$ is a morphism $\bE \to \bD$ together with a $G$-action on $\bE$ such that: $\bE$ is regular, $\bE\to \bD$ is flat of rank $\# G$, and 
$\bE^* \coloneqq \bD^* \times_\bD \bE \to \bD^*$ is a $G$-torsor. Equivalently, a $G$-cover $\bE\to\bD$ is the same as the normalization of $\bD$ along a $G$-torsor $\bE^{\ast} \to \bD^{\ast}$. \emph{A twisted formal disk} is the quotient stack $\sE=[\bE/G]$ associated to a $G$-cover $\bE\to \bD$, which is equipped with a morphism $\sE \to \bD$. 
Note that isomorphism classes of twisted formal disks are in one-to-one correspondence with isomorphism classes of Galois extensions of $\kay \llparenthesis t \rrparenthesis$ and that these are parametrized by an infinite dimensional space. See \cite[\S2.1]{YasudaPcyclicMackayCorrespondenceMotivicIntegration}, \cite[\S5]{YasudaMotivicIntegrationDeligneMumfordStack}.

Let $\sX$ be a separated, irreducible, and reduced Deligne--Mumford stack of finite type. A \emph{twisted arc} of $\sX$ is a representable morphism $\sE \to \sX$ from a twisted formal disk $\sE$. The moduli stack of twisted arcs of $X$ is denoted by $\sJ_{\infty} \sX$. For a twisted formal disk $\sE$, we denote by $\sJ^\sE_{\infty} \sX$ the locus of twisted arcs $\sE \to \sX$, so $\sJ _{\infty}\sX =\bigsqcup _{\sE} \sJ_{\infty}^\sE \sX$. An \emph{untwisted arc} of $\sX$ is a (necessarily representable) morphism $\bD \to \sX$. Since $\bD$ is a special case of twisted formal disks, untwisted arcs are twisted arcs. The moduli stack of untwisted arcs is denoted by $J_{\infty} \sX$, which is the substack $\sJ^\bD_{\infty}\sX$ of $\sJ_{\infty} \sX$.

To define the motivic measure $\mu_{\sX}$ on $\sJ_{\infty}\sX$, we need first to define the class $\{\sY\}\in \hat{\sM}'_{\kay,r}$ of a Deligne--Mumford stack $\sY$ of finite type. If $\sY$ is an algebraic space, it admits a partition $\sY=\bigsqcup_{i=1}^n\sY_i$ into schemes of finite type and we define $\{\sY\} \coloneqq \sum_{i=1}^n\{\sY_i\}$. For a Deligne--Mumford stack $\sY$, we define $\{\sY\}$ as the class of its coarse moduli space. We can also define the class $\{C\}$ of a constructible subset $C\subset \sY$ via its partition into locally closed subsets. If $C \subset \sJ_\infty \sX$ is a stable subset and if $\sJ_n \sX$ is a suitably defined stack of twisted $n$-jets with the truncation map $\pi_n \colon \sJ_\infty \sX\to \sJ_n \sX$, we define
\[
\mu_\sX(C)\coloneqq\{\pi_n(C)\}\bL^{-n\dim \sX},\quad n \gg 0,
\]
by mimicking \autoref{eqn.MeasureStableSubset}.

\begin{remark}
The above definition of $\mu_\sX$ is implicit in \cite{YasudaMotivicIntegrationDeligneMumfordStack}, which deals with schemes/stacks over $\Spf \kay\llbracket t\rrbracket$. To pass to this setting, we just base change from $\kay$ to $\Spf \kay\llbracket t \rrbracket$ and get a formal Deligne--Mumford stack over $\Spf \kay\llbracket t\rrbracket$, denoted again by $\sX$. To this formal Deligne--Mumford stack, we associate the untwisting stack $\mathrm{Utg}_\Gamma(\sX)^{\mathrm{pur}}$, which is a formal Deligne--Mumford stack over $\Spf \kay\llbracket t\rrbracket \times \Gamma$ for some Deligne--Mumford stack $\Gamma$ locally of finite type over $\kay$. 
The stack of twisted arcs of $\sX$ is then identified with the stack $J_\infty(\mathrm{Utg}_\Gamma(\sX)^{\mathrm{pur}})$ of untwisted arcs of $\mathrm{Utg}_\Gamma(\sX)^{\mathrm{pur}}$, that is, morphisms $\Spf \kay\llbracket t\rrbracket  \to \mathrm{Utg}_\Gamma(\sX)^{\mathrm{pur}}$ that are compatible with a morphism $\Spf \kay\llbracket t\rrbracket \to \Spf \kay\llbracket t\rrbracket \times \Gamma $ induced by a $\kay$-point of $\Gamma$. 
Then, $\mu_\sX$ was defined to be the motivic measure of the untwisted arc space $J_\infty(\mathrm{Utg}_\Gamma(\sX)^{\mathrm{pur}})$ of the untwisting stack $\mathrm{Utg}_\Gamma(\sX)^{\mathrm{pur}}$. In turn, the motivic measure of $J_\infty(\mathrm{Utg}_\Gamma(\sX)^{\mathrm{pur}})$ was defined in terms of truncation maps $J_\infty(\mathrm{Utg}_\Gamma(\sX)^{\mathrm{pur}})\to J_n(\mathrm{Utg}_\Gamma(\sX)^{\mathrm{pur}})$, similarly to the classical case where $\sX$ is a variety over a field. However, since  $\sJ_n \sX=J_n(\mathrm{Utg}_\Gamma(\sX)^{\mathrm{pur}})$ by definition, we may define the motivic measure on $\sJ_\infty \sX$ in the usual way in terms of truncation maps $\sJ_\infty \sX \to \sJ_n \sX$ eventually. 
\end{remark}

When $\sX$ is the quotient stack $[V/G]$ associated to an action of a finite group $G$ on a variety $V$, we then have $J_{\infty} \sX =[(J_{\infty}V)/G]$. 
To describe twisted arcs of $\sX$ in terms of the $G$-action on $V$, we need the notion of $G$-arcs. For a $G$-cover $\bE\to \bD$, an \emph{$\bE$-twisted $G$-arc} of $V$ is a $G$-equivariant morphism $\bE\to V$. Two $G$-arcs $\bE\to V$ and $\bE'\to V$ are isomorphic if there is a $V$-morphism $\bE\to \bE'$ that is an isomorphism as $G$-covers of $\bD$. We denote the space of $\bE$-twisted $G$-arcs of $V$ by 
\[
J_{\infty}^{\bE,G}V \coloneqq \{G\text{-equivariant }\bE\to V\}/ \Aut(\bE)^{\mathrm{opp}},
\]
where $\Aut(\bE)^{\mathrm{opp}}$ is the opposite group of the automorphism group of $\bE$ as a $G$-cover of $\bD$
and isomorphic to the centralizer $C_G(H)$ of the stabilizer $H$ of a connected component of $\bE$. 
In particular, for the trivial $G$-cover $\bE^{\mathrm{triv}}=\bD \times G \to \bD $, we have
$J_{\infty}^{\bE ^{\mathrm{triv}},G}V = (J_\infty V)/G$. Further, we have identifications $\sJ_{\infty}^\sE\sX = J_{\infty}^{\bE,G}V$, where $\sE=[\bE/G]$. Thus,
\[
\sJ_{\infty} \sX =\bigsqcup _{\sE} \sJ^\sE_\infty \sX= \bigsqcup _{\bE} J_\infty ^{\bE,G}V .
\]

\begin{remark}
We follow the convention that groups act on spaces from the left. The automorphism group $\Aut(\bE)\simeq C_G(H)^{\mathrm{opp}}$, \emph{a priori}, acts on $J_{\infty}^{\bE,G}V$ from the right. This induces the left action of $\Aut(\bE)^{\mathrm{opp}} \simeq C_G(H)$ on $J_{\infty}^{\bE,G}V$. This action is identical to the one induced by restricting the $G$-action on $V$ to $C_G(H)$.
\end{remark}

The measure $\mu_{\sX}$ restricted to $J_{\infty} \sX = [(J_\infty V)/G]$ and the measure $\mu_V$ are related as follows (with $\sX=[V/G]$ as above). For a measurable subset $C\subset J_\infty \sX$, its preimage $\tilde{C}\subset J_{\infty} V$ is a $G$-invariant measurable subset, and the following equality holds
\begin{equation} \label{eqn.MeasureOfQuotient}
\mu_\sX(C)=\mu_V(\tilde{C})/G.
\end{equation}

Let $\sY$ be another Deligne--Mumford stack satisfying the same conditions as $\sX$ and let $f\colon \sY \to \sX$ be a (not necessarily representable) morphism. For a twisted arc $\gamma\colon\sE \to \sY$, the composition $f\circ \gamma \colon \sY \to \sX$ is not generally a twisted arc as it may not be representable. Nevertheless, it factors uniquely as 
\[
 f\circ \gamma \colon \sE \to \sE' \xrightarrow{\gamma'} \sX,
\]
where $\sE \to \sE'$ is a $\bD$-morphism of twisted formal disks and $\gamma'$ is a twisted arc of $\sX$. 
If $\sX$ is an algebraic space, then $\sE'=\bD$ and $\gamma'$ are induced from the universality of the coarse moduli space $\sE\to \bD$. 
Sending $\gamma $ to $ \gamma'$ defines a map $f_\infty \colon \sJ_\infty \sY \to \sJ _\infty \sX$. When $f$ is proper and birational, $f_{\infty}$ is almost bijective (meaning bijective outside subsets of motivic measure zero); see \cite[Example 13.7]{YasudaMotivicIntegrationDeligneMumfordStack}. On the other hand, the restriction of $f_\infty$ to untwisted arcs $f_\infty |_{J_\infty \sY}\colon J_\infty \sY \to J_\infty \sX$ is not necessarily almost bijective, which is the main reason for introducing twisted arcs. When $\sY$ is the quotient stack $[V/G]$ and $\sX$ is the corresponding quotient scheme $V/G$, the map
\[
\sJ_\infty \sY = \bigsqcup_\bE J_\infty ^{\bE,G}V \to J_\infty (V/G)= \sJ_\infty \sX
\]
sends a $G$-arc $\bE\to V$ to the associated morphism of quotient schemes $\bD=\bE/G \to V/G$.

\begin{theorem}[The change of variables formula; {\cite[Theorem 16.1]{YasudaMotivicIntegrationDeligneMumfordStack}}]
Let $f\colon \sY \to \sX$ be a morphism of separated, irreducible, and reduced Deligne--Mumford stacks of finite type over $\kay$.
Let $h\colon f_{\infty}(A) \subset \sJ_\infty \sX \to \frac{1}{r}\bZ\cup\{\infty\}$ be a measurable function where $A \subset \sJ_{\infty}\sY$ is a measurable subset over which $f_{\infty}$ is almost geometrically injective. Then,
\[
\int _{f_\infty(A)} \bL^{h+\mathfrak{s}_\sX} \mathrm{d}\mu_\sX =
\int _{A} \bL^{h\circ f_\infty -\mathfrak{j}_f+\mathfrak{s}_\sY} \mathrm{d}\mu_\sY,
\]
where $\mathfrak{j}_f$ is the Jacobian order function associated to $f$ and $\mathfrak{s}_\sX$ and $\mathfrak{s}_\sY$ are the shift functions of $\sX$ and $\sY$; respectively.
\end{theorem}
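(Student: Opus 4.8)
The plan is to prove the change of variables formula by adapting the Denef--Loeser argument to the stacky (and possibly wildly ramified) setting, the key device being the untwisting construction, which trades twisted arc spaces for genuine arc spaces of auxiliary stacks on which the map $f_\infty$ can be analysed jet by jet.

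\emph{Reduction to jet schemes.} First I would use that, by construction, $\mu_\sX$ is the motivic measure of the untwisted arc space of the purified untwisting stack $\Utg_\Gamma(\sX)^{\pur}$, with $\cJ_n\sX=J_n\big(\Utg_\Gamma(\sX)^{\pur}\big)$, and likewise for $\sY$; under these identifications $f_\infty$ becomes a morphism between untwisted arc spaces of untwisting stacks. Locally in the \'etale topology those are quotient stacks $[V/G]$ with $V$ smooth affine, so one reduces to studying the induced morphisms $f_n$ of (twisted) $n$-jet stacks, where $\mathfrak{j}_f$ is the Jacobian order function and the shift functions $\mathfrak{s}_\sX,\mathfrak{s}_\sY$ are read off from the $\age$ of the inertia of $\sX$ and $\sY$ along an arc.

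\emph{Stratification and the local lifting lemma.} Both sides are countably additive in $A$ and the integrands are measurable, so it suffices to treat the case where $A$ is a stable subset on which $h$, $\mathfrak{j}_f$ and the relevant age data along $f$ are all constant, say $\mathfrak{j}_f\equiv e$ (the locus $\{h=\infty\}$ contributes $0$ to both sides and may be dropped). Choosing $n\gg 0$ so that $A=\pi_n^{-1}\big(\pi_n(A)\big)$ and $f_\infty(A)=\pi_n^{-1}\big(f_n(\pi_nA)\big)$, everything reduces to expressing $\{\pi_nA\}$ in terms of $\{f_n(\pi_nA)\}$. The crux is a local lifting lemma: over the constructible locus where $\mathfrak{j}_f=e$, the morphism $f_n$ is, after an fppf base change on the source, a Zariski-locally trivial fibration with affine-space fibres of the dimension dictated by $e$ and by the discrepancy of the shift functions. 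For representable $f$ and schemes this is the classical infinitesimal-lifting computation with the Jacobian ideal; in general one runs it on a smooth atlas while bookkeeping the inertia --- this is where the $\age$ contributions enter --- and the hypothesis that $f_\infty$ is almost geometrically injective on $A$ is what makes the fibre count honest outside a set of measure zero.

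\emph{Assembly and the general case.} Using $\mu_\sX\big(f_\infty(A)\big)=\{f_n(\pi_nA)\}\,\LL^{-n\dim\sX}$ and $\mu_\sY(A)=\{\pi_nA\}\,\LL^{-n\dim\sY}$, the local lemma writes $\{\pi_nA\}$ as $\{f_n(\pi_nA)\}$ times an explicit power of $\LL$; matching that power to $-\mathfrak{j}_f+\mathfrak{s}_\sY-\mathfrak{s}_\sX$ (together with the jet-dimension correction) is routine and yields $\int_{f_\infty(A)}\LL^{\mathfrak{s}_\sX}\mathrm d\mu_\sX=\int_A\LL^{-\mathfrak{j}_f+\mathfrak{s}_\sY}\mathrm d\mu_\sY$ on the stratum; reinstating the constant weight $\LL^{h}$, which passes through the stratification unchanged, gives the stated identity there, and the general measurable case then follows by approximating $A$ by stable subsets and $h$ by simple functions and invoking countable additivity and convergence in $\hat{\cM}'_{\kay,r}$. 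I expect the main obstacle to be the local lifting lemma in the wildly ramified stacky setting: in positive characteristic the naive jet-lifting statement fails for quotient singularities, so one cannot argue on the coarse space, and it is precisely the untwisting stack that restores the good behaviour --- pinning down the fibre dimension there, with the exact $\age$/shift bookkeeping, is the technical heart of the whole formula.
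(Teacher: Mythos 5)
This statement is not proved in the paper at all: it is quoted, with hypotheses and notation, directly from the cited reference \cite[Theorem 16.1]{YasudaMotivicIntegrationDeligneMumfordStack} and used as a black box, so there is no internal argument to measure your proposal against. Judged on its own terms, your outline reproduces the expected Denef--Loeser/Sebag strategy filtered through the untwisting construction, which is indeed the route of the cited work; but as a proof it has a genuine gap rather than a complete argument. The whole content of the theorem is concentrated in the step you defer: the claim that over the stratum $\{\mathfrak{j}_f=e\}$ the truncated morphism $f_n$ becomes (on the untwisting stacks $\Utg_\Gamma(\cdot)^{\pur}$) a piecewise trivial fibration in affine spaces of the predicted dimension, uniformly in $n\gg0$. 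In the wild stacky setting this is exactly what fails for naive jet spaces of quotient singularities and exactly what the untwisting stacks are built to repair; you explicitly flag it as ``the technical heart'' and then assume it, so the one nontrivial ingredient of the proof is missing rather than established.

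Two further points would need repair even granting that lemma. First, your bookkeeping of the shift functions is wrong in the case that matters: reading $\mathfrak{s}_\sX$ and $\mathfrak{s}_\sY$ off the $\age$ of the inertia is only valid for tame twisted arcs; for wildly ramified twisted disks (the situation this paper actually needs, e.g.\ $p$-groups in characteristic $p\le 5$) the shift function is defined through the untwisting construction and ramification data (dimension/codifferent-type invariants of $Y^{|\bE|}_\bD$), not through $\age$, so an exponent count calibrated to ages would reproduce the tame orbifold formula and give the wrong power of $\LL$ on wild strata. Second, the reduction ``choose $n\gg0$ so that $f_\infty(A)=\pi_n^{-1}\bigl(f_n(\pi_n A)\bigr)$'' presumes that the image of a stable set is stable, which is not automatic for a general (non proper-birational, non-representable) $f$; this is precisely where the hypothesis that $f_\infty$ is almost geometrically injective over $A$ has to be used to control both the measurability of $f_\infty(A)$ and the fibre count outside a measure-zero set, and your sketch treats it as a one-line remark. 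As it stands, then, the proposal is a correct road map of the known strategy but not a proof; the honest course in this paper is what the authors do, namely cite \cite[Theorem 16.1]{YasudaMotivicIntegrationDeligneMumfordStack}.
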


Recall that the shift function $\mathfrak{s}_\sX$ is constantly zero on the space $J_\infty \sX$ of untwisted arcs. 

\subsection{Stringy motives of log pairs} \label{sec.StringyMotives} Let $X$ be a $d$-dimensional normal variety. If $x \in X$ is a closed point, $(J_{\infty}X)_{x} \subset J_{\infty}X$ denotes the preimage of $x$ along the projection $J_{\infty}X\to X$. Let $\Delta$ be a boundary on $X$ such that $(X,\Delta_X)$ has index $r$. Let $\sI_{X, \Delta}\subset\sO_{X}$ be defined by:
\[
\sI_{X,\Delta}\sO_X\big(r(K_X + \Delta)\big)=\Image\left(\left(\Omega_{X/\kay}^{d}\right)^{\otimes r}\to\sO_X\big(r(K_X + \Delta)\big)\right).
\]
In other words, $\sI_{X, \Delta}\subset\sO_{X}$ is the image of the pairing map:
\[
\left(\Omega_{X/\kay}^{d}\right)^{\otimes r} \otimes \sO_X\big(-r(K_X + \Delta)\big) \to \sO_X.
\]
Further, we consider the associated order function $\ord \sI_{X,\Delta}\colon J_{\infty}X\to\bN \cup \{\infty\}$ (namely, $\gamma \mapsto \length \kay\llbracket  t\rrbracket/\gamma^{-1} \sI_{X,\Delta}$) and set for notation ease
\[
F=F_{(X,\Delta)} = \frac{1}{r} \ord \sI_{X,\Delta}\colon J_{\infty}X\to \frac{1}{r} \cdot \bN\cup\{\infty\}.
\]
Note that $F^{-1}(\infty)=J_{\infty}Z$, where $Z = V(\sI_{X,\Delta})$, and that this is a subset of measure zero with respect to the measure $\mu_X$ on $J_{\infty} X$.
\begin{definition}[Stringy motive of a log pair]
Suppose that $(X,\Delta)$ is a KLT log pair that admits a log resolution. The \emph{stringy motive of $(X,\Delta)$} is defined as
\[
M_{\st}(X,\Delta) \coloneqq \int_{J_{\infty}X}\bL^{F_{(X,\Delta)}}\mathrm{d}\mu_{X}\in\hat{\sM}'_{\kay,r}.
\]
More generally, for a constructible subset $C\subset X$, the \emph{stringy motive of $(X,\Delta)$ along $C$}, denoted by $M_{\st}(X,\Delta)_C$, is defined as follows: let $(J_\infty X)_C:=(\pi_0)^{-1} (C) \subset J_\infty X$ and put 
\[
M_{\st}(X,\Delta)_C\coloneqq \int_{(J_{\infty}X)_{C}}\bL^{F_{(X,\Delta)}}\mathrm{d}\mu_{X}\in\hat{\sM}'_{\kay,r}
\]
If $X$ is log terminal, we define $M_{\st}(X)$ and $M_{\st}(X)_C$ as the corresponding stringy motives associated with the log pair $(X,0)$.
\end{definition}

We are mainly interested in the case where $C$ consists of a single closed point $x\in X$. In that case, we write $M_{\mathrm{st}}(X,\Delta)_C=M_{\mathrm{st}}(X,\Delta)_x$. 

\begin{remark} \label{rem.StringyMotivesLogResolution}
Since $(X,\Delta)$ has a log resolution, the KLT condition implies that the above integral converges. In fact, let $\phi\colon \tilde{X}\to (X,\Delta)$ be a log resolution and write:
\[
K_{\tilde{X}} \sim_{\bQ} \phi^*(K_X + \Delta) + \sum_{i\in I} b_i(E,X,\Delta) \cdot E_i,
\]
where the $E_i$ are prime divisors on $\tilde{X}$. For notation ease, we set
\[
K_{\tilde{X}/(X,\Delta)} \coloneqq \sum_{i\in I} b_i(E,X,\Delta) \cdot E_i \eqqcolon \sum_{i\in I} b_i \cdot E_i,
\]
If $r$ is the index of $(X,\Delta)$, then $b_i \in \frac{1}{r} \cdot \bZ$. Moreover, $(X,\Delta)$ being KLT means that the log discrepancies $a_i \coloneqq 1+b_i$ are positive. The change of variables formula and the explicit computation of stringy motives for simple normal crossing divisors yields
\begin{equation}\label{ex-formula}
M_{\st}(X,\Delta)_C=M_{\st}\Big(\tilde{X}, -K_{\tilde{X}/(X,\Delta)}\Big)_{\phi^{-1}(C)} = \sum_{J \subset I} \{E_J^{\circ}\cap \phi^{-1}(C)\} \prod_{j\in J} \frac{\bL - 1}{\bL^{a_j}-1},
\end{equation}
where $ E^{\circ}_J \coloneqq \bigcap_{j\in J} E_j \big\backslash \bigcup_{j\notin J} E_j$.
These formulas are well-known to specialists. They can be found in \cite[Proposition 8.4]{YasudaToward}, \cf \cite[Proposition 3.4.4 and Theorem 4.1.2]{ChambertLoirNicaiseSebag}. Further details are given in \cite{YasudaBook}.

The invariant $M_{\st}(X,\Delta)_C$ satisfies the following additivity property: when $C_1,\dots,C_n\subset X$ are mutually disjoint constructible subsets, then 
\begin{equation}\label{additive}
M_{\st}(X,\Delta)_{\bigsqcup _{i=1}^n C_i}=\sum_{i=1}^n M_{\st}(X,\Delta)_{C_i},
\end{equation}
which is a direct consequence of either the definition or formula \autoref{ex-formula}.
Also, one readily sees that $M_{\st}(X,\Delta)_x$ depends only on the formal completion (or the henselization) of $X$ at $x$ and so we may regard it as an invariant of $\hat{\sO}_{X,x}$ (or $\sO_{X,x}^{\mathrm{h}}$). 
\end{remark}

\begin{example}[Surfaces and minimal resolutions] \label{ex.Surfaces}
 With notation as above, suppose that $(x,X)$ is a log terminal surface singularity (with Gorenstein index $r \in \bN$) and $\phi \colon \Tilde{X} \to (x,X)$ is a minimal log resolution, so that $a_i \in (0,1]$ (see \cite[Claim 2.26.4, p. 56]{KollarSingulaitieofMMP}). In this case, $(x,X)$ is further rational and so the dual graph $\Gamma$ associated to $\phi \colon \Tilde{X} \to (x,X)$ is a tree of $\bP^1$'s, which means that $E_i \cong \bP^1$ for all $i \in I$. Let us recall that the set of vertices of $\Gamma$ is $I$ and there is an edge connecting two different vertices $i,j \in I$ if and only if $E_i \cap E_j \neq \emptyset$. Let $H$ be the set of edges of $\Gamma$. We denote an element of $H$ by $[i,j]$ where $i,j\in I$ are the vertices that edge connects (of course, $[i,j]=[j,i]$). Further, denote by $m_i \geq 1$ the number of edges sticking out of a vertex $i\in I$. From \autoref{ex-formula}, we may compute $M_{\st}(X)_x$ as
\begin{align*}
    M_{\st}(X)_x = &\sum_{i\in I}\left\{E_{i} \Bigg\backslash \bigcup_{j \neq i} E_j \right\}\frac{\bL-1}{\bL^{a_{i}}-1}+\sum_{\{i,j\}\subset I}\{E_{i}\cap E_{j}\}\frac{(\bL-1)^{2}}{(\bL^{a_{i}}-1)(\bL^{a_{j}}-1)}\\
    = &\sum_{i\in I} (\bL +1 - m_i) \frac{\bL-1}{\bL^{a_{i}}-1}+\sum_{[i,j]\in H}\frac{(\bL-1)^{2}}{(\bL^{a_i}-1)(\bL^{a_j}-1)} \in \bZ\llparenthesis\bL^{-1/r} \rrparenthesis \subset \hat{\sM}'_{\kay,r}.
\end{align*}
We will further review how the first equality above works in the proof of \autoref{lem.SurfaceCaseDescription} below. Its Poincar\'e realization then is
\[
P_{\st}(X)_x =  \sum_{i\in I} (T^2 +1 - m_i) \frac{T^2-1}{T^{2a_{i}}-1}+\sum_{h\in H}\frac{(T^2-1)^{2}}{(T^{2a_i}-1)(T^{2a_j}-1)} \in \bZ\llparenthesis T^{-2/r} \rrparenthesis,
\]
which is a formal Laurent series in $T^{-2/r}$ with integral coefficients.
\end{example}

\begin{example}[Cones] \label{ex.ConesOverFanos}
Let $V$ be a \emph{smooth} Fano variety and write $K_V + aL \sim_{\bQ} 0$ for some ample Cartier divisor $L$ on $V$ and some $0<a \in \bQ$. Then, the affine cone $X=\Spec \bigoplus_{i \in \bN} H^0(V,iL)$ is log terminal; let $x \in X$ denote its vertex. See \cite[Lemma 3.1]{KollarSingulaitieofMMP}. The blowup $Y \coloneqq\Bl_x X \to X$ is a resolution with exceptional divisor $E \cong V$. 
From \eqref{ex-formula}, it follows that:
\[
M_{\st}(X)= (\bL-1)\{V\} + \{V\} \frac{\bL-1}{\bL^a-1} \text{ whereas } M_{\st}(X)_x = \{V\} \frac{\bL-1}{\bL^a-1}.
\]
Observe that $\{X\}=(\bL-1)\{V\}+1$. For instance, if $V$ were a smooth hypersurface of degree $d$ inside $\bP^n$ (with $L$ being the hyperplane section), then $a= n+1-d$. For example, if $V \subset \bP^d$ is the rational normal curve, then $\{V\}=\bL+1$ and $a=2/d$ so that $M_{\st}(X)_x=(\bL^2-1)/(\bL^{2/d}-1)$.

The formula above can be generalized to the case in which $V$ is a \emph{log terminal} Fano variety as follows:
\[
 M_{\mathrm{st}}(X)_x = M_{\mathrm{st}}(V)\frac{\bL-1}{\bL^{a+1} -1 }. 
\]
To show this, let us take a log resolution $\tilde{V}\to V$. Since $Y$ is a line bundle over $V$, we have the corresponding log resolution $f\colon\tilde{Y} \to Y $. Let us write $K_{\tilde{V}/V} = \sum _{i\in I} b_i E_i$ and  $K_{\tilde{Y}/Y} = \sum _{i\in I} b_i F_i$, where the $E_i$ are prime divisors on $\tilde{V}$ and the $F_i$ are their corresponding prime divisors on $\tilde{Y}$; respectively. Then 
\[
M_{\mathrm{st}}(V)=\sum_{J\subset I} \{E_J^\circ\} \prod_{j \in J} \frac{\bL-1}{\bL^{b_j+1}-1}. 
\]
To compute $M_{\mathrm{st}}(X)_x$, observe that
\[K_{\tilde{Y}/X}
= K_{\tilde{Y}/Y} +  f^* K_{Y/X} = \sum_{i\in I} b_i F_i + a G,
\]
where $G$ denotes the strict transform of the exceptional divisor of $Y\to X$, which is isomorphic to $V$. Setting $F_0 \coloneqq G$ and $b_0 \coloneqq a$, we may write $K_{\tilde{Y}/X} = \sum _{i\in I\cup \{0\}} b_i F_i $. Thus, letting $g$ denote the map $\tilde{Y} \to Y \to X$, we have:
\begin{align*}
    M_{\mathrm{st}}(X)_x 
    &= \sum_{J\subset I\cup \{0\}} \big\{F_J^\circ \cap g^{-1}(x)\big\} \prod_{j\in J}\frac{\bL-1}{\bL^{b_j+1}-1} \\
    &= \sum_{J\subset I} \big\{F_J^\circ \cap g^{-1}(x)\big\} \prod_{j\in J}\frac{\bL-1}{\bL^{b_j+1}-1} 
    + \sum_{0\in J\subset I\cup \{0\}} \big\{F_J^\circ \cap g^{-1}(x)\big\} \prod_{j\in J}\frac{\bL-1}{\bL^{b_j+1}-1} . 
\end{align*}
Since $g^{-1}(x)=F_0$, we have that $F_J^\circ \cap g^{-1}(x) =\emptyset$ for every $J\subset I$. Therefore,
\begin{align*}
    M_{{\mathrm{st}}}(X)_x 
    &=   \sum_{0\in J\subset I\cup \{0\}} \big\{F_J^\circ \cap g^{-1}(x)\big\} \prod_{j\in J}\frac{\bL-1}{\bL^{b_j+1}-1} \\
    &=  \sum_{J'\subset I} \{E_{J'}^\circ \} \prod_{j\in J'\cup\{0\}}\frac{\bL-1}{\bL^{b_j+1}-1} \\
    &=  \left(\sum_{J'\subset I} \{E_{J'}^\circ \} \prod_{j\in J'}\frac{\bL-1}{\bL^{b_j+1}-1}\right) \frac{\bL-1}{\bL^{b_0+1}-1}\\
    & = M_{\mathrm{st}}(V)\frac{\bL-1}{\bL^{b_0+1}-1};
\end{align*}
as required. This concludes our example about stringy motives of cone singularities.
\end{example}

If a finite group $G$ acts on $X$  preserving the boundary $\Delta$, there exists a canonical lift of $M_{\st}(X,\Delta)$ to $G\textrm{-}\hat{\sM}'_{\kay,r}$, which we denote by $M_{\st}(X,\Delta)$ by abuse of notation. The same lifting principle holds for $M_{\st}(X,\Delta)_x$ if $G$ fixes $x$. This lets us define $M_{\st}(-)/G$ where ``$-$'' denotes any of the data we had considered above. Once again, the invariant $M_{\st}(X,\Delta)_x/G$ depends only on the completion/henselization of $X$ at $x$ (and the $G$-action on it). 

We may also define $M_{\st}(-)/G$ in terms of motivic integration over Deligne--Mumford stacks. This interpretation will be useful when a change of variables formula is required. Let $\sX$ be the quotient stack $[X/G]$. Then, $M_{\st}(X/G) =M_{\st}(\sX)$ (use \cite[Theorem 1.2]{YasudaMotivicIntegrationDeligneMumfordStack}) whereas $M_{\st}(X)/G$ is equal to the untwisted stringy motive $M_{\st}^{\utd}(\sX)$
\cite[Definition 13.2]{YasudaMotivicIntegrationDeligneMumfordStack}, which is
by definition a motivic integral over the space $J_{\infty}\sX$ of untwisted arcs $\bD \to\sX$. That is:
\[
M_{\st}(X)/G =\left ( \int_{J_{\infty} X} \bL^{F_{X}} \mathrm{d} \mu_{X}\right)\Biggm/G= \int_{J_{\infty} \sX} \bL^{F_{\sX}} \mathrm{d} \mu_{\sX} \eqqcolon M_{\st}^{\utd}(\sX),
\]
where $F_{\sX}$ is the function corresponding to the $G$-invariant function $F_{X}$ by identifying $J_\infty \sX$ with $(J_\infty X)/G$. 
Analogous descriptions apply to $M_{\st}(X,\Delta)/G$ and $M_{\st}(X,\Delta)_x/G$, e.g.
\[
M_{\st}(X,\Delta)_x/G = M_{\st}^{\utd}(\sX, \bar{\Delta})_{\bar{x}} \coloneqq \int_{(J_{\infty}\sX )_{\bar{x}} } \bL^{F_{\sX, \bar{\Delta}}} \mathrm{d} \mu_{\sX}
\]
where $\bar{\Delta}$ and $\bar{x}$ are; respectively, the divisor and point of $\sX$ induced by $\Delta$ and $x$. Further, observe that the stringy motive  $M_{\st}(\sX,\bar{\Delta})_{\bar{x}}$ (without the
adjective ``untwisted'') is instead a motivic integral over the
whole space $\sJ_{\infty}\sX$ of twisted arcs of $\sX$:
\[
M_{\st}(\sX, \bar{\Delta})_{\bar {x}} = \int_{(\sJ_\infty \sX)_{\bar{x}}} \bL^{F_{\sX,\bar{\Delta}}+ \mathfrak{s}_\sX} \mathrm{d}\mu_\sX = M_{\st}(X/G, \Delta/G)_{\bar{x}},
\]
where the function $F_{\sX,\bar{\Delta}}$ (as defined above) extends to $\sJ_\infty \sX$, $\mathfrak{s}_{\sX}$ is the shift function, $\Delta/G$ and $x/G$ are respectively the divisor and the point on $X/G$ induced from $\Delta$ and $x$, and the second equality is a direct application of \cite[Theorem 1.2]{YasudaMotivicIntegrationDeligneMumfordStack}. Those functions take values in $\frac{1}{r}\bZ$ except that $F_{\sX,\bar{\Delta}}$ takes value $\infty$
on the twisted arc space $\sJ_\infty \sZ$ of a proper closed substack $\sZ \subsetneq \sX$. However, $\sJ_\infty \sZ$ has measure zero as a subset of $\sJ_\infty \sX$. Note also that $\mathfrak{s}_\sX$ is identically zero on $J_\infty \sX$. Summing up, $M_{\st}^{\utd}(\sX, \bar{\Delta})_{\bar x}$ is obtained from $M_{\st}(\sX, \bar{\Delta})_{\bar x}$ by restricting the domain of integration to untwisted arcs.  Therefore,
\[
M_{\st}(X/G, \Delta/G)_{\bar{x}} = M_{\st}(\sX, \bar{\Delta})_{\bar x} \ge M_{\st}(\sX, \bar{\Delta})_{\bar x}^{\utd}
\]
and the inequality is strict whenever  $\sJ_\infty \sX\setminus J_\infty \sX$ has positive measure.
\begin{setup} \label{setup.MainTheorems}
Let us consider a commutative diagram
\[
\xymatrix{
(X_1,\Delta_1)  \ar[dr]_-{g_1}& & (X_2,\Delta_2) \ar[ll]_-{f} \ar[dl]^-{g_2}\\
&(X_0,\Delta_0)&
}\]
of crepant finite covers among log pairs. Moreover, suppose that $g_i$ is generically Galois with Galois group $G_i$. Let us define $\sX_i\coloneqq[X_i/G_i]$ and let $\bar{\Delta}_i$ be the boundary divisor on $\sX_i$ induced by $\Delta_i$. Let $x_0$ be a $\kay$-point of $X_0$ and let $\bar{x}_i$ be its unique lift to $\sX_i$.
\end{setup}

\begin{proposition}[{\cite[Theorem 16.2]{YasudaMotivicIntegrationDeligneMumfordStack}}]
Work in \autoref{setup.MainTheorems}. Then, for all $i \in \{1,2\}$:
 \[
 M_{\st}(X_0,\Delta_0)=M_{\st}(\sX_i,\bar{\Delta}_i)
\text{ and likewise }
M_{\st}(X_0,\Delta_0)_{x_0}=M_{\st}(\sX_i,\bar{\Delta}_1)_{\bar{x}_i}.
 \]
\end{proposition}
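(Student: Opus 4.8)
The plan is to derive the proposition from the change of variables formula applied to the coarse moduli space morphism of $\sX_i$; this is, in essence, how \cite[Theorem 16.2]{YasudaMotivicIntegrationDeligneMumfordStack} proceeds. Fix $i\in\{1,2\}$ and, to lighten notation, write $G=G_i$, $Y=X_i$, $X=X_0$, $\Delta_X=\Delta_0$, $\sX=\sX_i=[Y/G]$ and $\bar{\Delta}=\bar{\Delta}_i$; we may assume $G$ acts faithfully on $Y$. First I would record two structural facts. The coarse moduli space of $\sX$ is canonically $X$: the finite dominant morphism $g=g_i\colon Y\to X$ factors as $Y\to Y/G\to X$, and $Y/G$ --- normal, as the quotient of a normal variety by a finite group, finite over $X$ because $Y$ is, and birational over $X$ because $K(Y)^G=K(X)$ --- is therefore isomorphic to the normal variety $X$. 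Hence the structure morphism $c\colon\sX\to X$ is proper, and it is birational since it is an isomorphism over the dense open locus where $G$ acts freely. Second, $c$ is \emph{crepant}, i.e.\ $K_\sX+\bar{\Delta}=c^*(K_X+\Delta_X)$: the quotient map $p\colon Y\to\sX$ is an \'etale $G$-torsor, so $p^*K_\sX=K_Y$, $p^*\bar{\Delta}=\Delta_i$, and, since $c\circ p=g$, pulling the claimed equality back along $p$ turns it into $K_Y+\Delta_i=g^*(K_X+\Delta_X)$, which is the crepancy hypothesis of \autoref{setup.MainTheorems}; as $p$ is faithfully flat, $p^*$ is injective on divisor classes, so the implication reverses.

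I would then invoke the change of variables formula \cite[Theorem 16.1]{YasudaMotivicIntegrationDeligneMumfordStack} for $c\colon\sX\to X$. Since $X$ is a scheme, $\cJ_\infty X=J_\infty X$ and $\fs_X\equiv 0$, so that $M_{\st}(X,\Delta_X)=\int_{J_\infty X}\LL^{F_{X,\Delta_X}}\,\mathrm{d}\mu_X$, whereas $M_{\st}(\sX,\bar{\Delta})=\int_{\cJ_\infty\sX}\LL^{F_{\sX,\bar{\Delta}}+\fs_\sX}\,\mathrm{d}\mu_\sX$. Because $c$ is proper and birational, $c_\infty\colon\cJ_\infty\sX\to J_\infty X$ is \emph{almost bijective}: there is a measurable $A\subset\cJ_\infty\sX$ of full measure over which $c_\infty$ is almost geometrically injective and with $c_\infty(A)\subset J_\infty X$ of full measure. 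Applying the formula with $h=F_{X,\Delta_X}$, and using that $J_\infty X\setminus c_\infty(A)$ has measure zero, I get
\[
M_{\st}(X,\Delta_X)=\int_{c_\infty(A)}\LL^{F_{X,\Delta_X}}\,\mathrm{d}\mu_X=\int_{A}\LL^{\,F_{X,\Delta_X}\circ c_\infty\,-\,\fj_c\,+\,\fs_\sX}\,\mathrm{d}\mu_\sX,
\]
where $\fj_c$ is the Jacobian order function of $c$.

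The crux --- and the step I expect to be the main obstacle --- is the pointwise identity $F_{X,\Delta_X}\circ c_\infty-\fj_c=F_{\sX,\bar{\Delta}}$, valid for $\mu_\sX$-almost every twisted arc; granting it, the displayed equation together with $\mu_\sX(\cJ_\infty\sX\setminus A)=0$ gives $M_{\st}(X,\Delta_X)=M_{\st}(\sX,\bar{\Delta})$. To prove the identity I would compare the two maps landing $(c^*\Omega^d_{X/\kay})^{\otimes r}$ in the reflexive sheaf $\sO_\sX\big(r(K_\sX+\bar{\Delta})\big)$, which equals $c^*\sO_X\big(r(K_X+\Delta_X)\big)$ by crepancy: on one hand the pullback along $c$ of the pairing map defining $\cI_{X,\Delta_X}$, whose image in $\sO_\sX$ is $c^*\cI_{X,\Delta_X}$; on the other hand the factorization through $(\Omega^d_{\sX/\kay})^{\otimes r}$ followed by the pairing map defining $\cI_{\sX,\bar{\Delta}}$. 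Since the comparison map $(c^*\Omega^d_{X/\kay})^{\otimes r}\to(\Omega^d_{\sX/\kay})^{\otimes r}$ has image $\Jac_c^{r}\cdot(\Omega^d_{\sX/\kay})^{\otimes r}$, where $\ord_\gamma\Jac_c=\fj_c(\gamma)$ by the definition of the Jacobian order function, one obtains $c^*\cI_{X,\Delta_X}=\Jac_c^{r}\cdot\cI_{\sX,\bar{\Delta}}$, hence $\ord(c^*\cI_{X,\Delta_X})=r\,\fj_c+\ord\cI_{\sX,\bar{\Delta}}$ on arcs outside a measure-zero set; dividing by $r$ yields the identity. The delicate part is to carry this through with the orbifold cotangent sheaf of $\sX$, the fractional $\tfrac{1}{r}$-normalizations, and the shift function $\fs_\sX$ on twisted arcs all handled correctly --- which is precisely the content of \cite[Theorem 16.2]{YasudaMotivicIntegrationDeligneMumfordStack}; alternatively one can organize the computation by passing to a common log resolution dominating $\sX$ and $X$ and reducing to the explicit simple normal crossing formula recalled in \autoref{rem.StringyMotivesLogResolution}.

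Finally, the pointed statement follows by repeating the argument with every integral restricted to arcs based at the marked points. The coarse space morphism $c$ is a homeomorphism, so $c^{-1}(x_0)=\{\bar{x}_i\}$, and the restriction $c_\infty\colon(\cJ_\infty\sX_i)_{\bar{x}_i}\to(J_\infty X_0)_{x_0}$ is again almost bijective; running the same change of variables computation with these fibres as domains of integration gives $M_{\st}(X_0,\Delta_0)_{x_0}=M_{\st}(\sX_i,\bar{\Delta}_i)_{\bar{x}_i}$.
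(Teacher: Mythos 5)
The paper supplies no proof of this proposition: it is stated purely as a citation of \cite[Theorem 16.2]{YasudaMotivicIntegrationDeligneMumfordStack}, so there is nothing internal to compare against. Your reconstruction is nevertheless correct in structure and tracks the argument one would give to prove the cited theorem: identifying $X_0$ with the coarse moduli space of $\sX_i$ (via Zariski's main theorem applied to the finite birational $X_i/G_i\to X_0$), establishing crepancy of $c\colon\sX_i\to X_0$ by pulling back along the \'etale torsor $X_i\to\sX_i$ and using faithfully-flat descent for divisors, and then invoking the stacky change-of-variables formula for the proper birational $c$. You correctly reduce everything to the pointwise identity $F_{X,\Delta_X}\circ c_\infty-\fj_c=F_{\sX,\bar\Delta}$, equivalently $c^*\cI_{X,\Delta_X}=\Jac_c^{\,r}\cdot\cI_{\sX,\bar\Delta}$, which is the genuine content of the theorem. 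Your sketch of that step is in the right spirit, though carrying it out in full requires unwinding Yasuda's definitions of the Jacobian order function and the shift function on twisted arcs and checking the comparison between the pairing map on $\sX$ and the pullback of the one on $X$; you acknowledge this and defer appropriately. I see no gap in the outline, and the pointed version via the homeomorphism property of the coarse-space map is handled correctly.
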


\begin{corollary} \label{cor.KeyCorollary}
Work in \autoref{setup.MainTheorems}. Then, 
\[
M_{\st}(X_1,\Delta_1)/G_1 
=
M_{\st}^{\utd}(\sX_1,\bar{\Delta}_1)
\ge 
M_{\st}^{\utd}(\sX_2,\bar{\Delta}_2)
=
M_{\st}(X_2,\Delta_2)/G_2
\]
Moreover, if $J_\infty X_1 \setminus f_\infty (J_\infty X_2)$ has positive measure, then the inequality is strict. 

Let $x_i\in X_i$ be the preimage of $x_0$ (which we are assuming to be a point and so that $G_i$ fixes $x_i$). Then,
\[
M_{\st}(X_1,\Delta_1)_{x_1}/G_1 
=
M_{\st}^{\utd}(\sX_1,\bar{\Delta}_1)_{\bar{x}_1} 
\ge  
M_{\st}^{\utd}(\sX_2,\bar{\Delta}_2)_{\bar{x}_2}
=
M_{\st}(X_2,\Delta_2)_{x_2}/G_2.
\]
Moreover, the inequality is strict if $(J_\infty X_1)_{x_1} \setminus f_\infty ((J_\infty X_2)_{x_2})$ has positive measure.
\end{corollary}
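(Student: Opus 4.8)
The outer equalities require nothing new: since $g_i\colon X_i\to X_0$ is generically Galois with group $G_i$ and $X_0$ is normal, we have $X_0=X_i/G_i$, so $M_\st(X_i,\Delta_i)/G_i=M_\st^\utd(\sX_i,\bar\Delta_i)$ and its pointed version $M_\st(X_i,\Delta_i)_{x_i}/G_i=M_\st^\utd(\sX_i,\bar\Delta_i)_{\bar x_i}$ are exactly the ``analogous descriptions'' recorded in \autoref{sec.StringyMotives}. So the whole content is the middle inequality $M_\st^\utd(\sX_1,\bar\Delta_1)\ge M_\st^\utd(\sX_2,\bar\Delta_2)$ (and its analogue over $\bar x_i$) together with the strictness criterion, and the plan is to get it from the change of variables formula along a suitable morphism of quotient stacks covering $f$.

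First I would build that morphism. Since $K(X_0)\subseteq K(X_1)\subseteq K(X_2)$ with both extensions Galois over $K(X_0)$, the subgroup $N\coloneqq\Gal\big(K(X_2)/K(X_1)\big)$ is normal in $G_2$ with $G_2/N\cong G_1$ and $X_1=X_2/N$; hence $f$ is $G_2$-equivariant for the $G_2$-action on $X_1$ through $G_2\twoheadrightarrow G_1$ and induces a (in general non-representable) morphism $\bar f\colon\sX_2\to\sX_1$. The key elementary observation is that $\bar f$ carries untwisted arcs to untwisted arcs: an untwisted arc of $\sX_2$ is represented by a map $\bD\to X_2$, and its image under $\bar f$ is the composite $\bD\to X_2\xrightarrow{f}X_1$, again an untwisted arc. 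Thus $\bar f$ restricts to $\bar f_\infty\colon J_\infty\sX_2\to J_\infty\sX_1$, which under $J_\infty\sX_i=(J_\infty X_i)/G_i$ is induced by $f_\infty$ (likewise on the fibers over $\bar x_i$, using $f(x_2)=x_1$, which follows from $g_2=g_1\circ f$ and uniqueness of $x_1$ over $x_0$).

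Next come the two facts that make the change of variables formula applicable to $\bar f$ with $A=J_\infty\sX_2$ and $h=F_{\sX_1,\bar\Delta_1}$. (i) $\bar f_\infty$ is almost geometrically injective on $J_\infty\sX_2$: arcs of $X_1$ whose (unique) point of $\bD^*$ lands on the branch locus $B\subsetneq X_1$ of $f$ factor through $B$, hence form a set of measure zero; over an arc $\gamma$ with $\gamma(\eta)\notin B$ the normalized pullback $\bE_\gamma\to\bD$ is an $N$-cover that is an $N$-torsor near $\eta$, so $\gamma$ either has no lift to $J_\infty X_2$ (nontrivial torsor) or its lifts form a single free $N$-orbit, which collapses to one point of $(J_\infty X_2)/G_2$; a short chase with the $G_2$-equivariance of $f$ then shows the fiber of $\bar f_\infty$ over $[\gamma]$ is exactly that point. (ii) $\bar f$ is crepant: pulling $K_{X_2}+\Delta_2=f^*(K_{X_1}+\Delta_1)$ back along the \'etale covers $X_i\to\sX_i$ gives $K_{\sX_2}+\bar\Delta_2=\bar f^*(K_{\sX_1}+\bar\Delta_1)$, so the Jacobian order function of $\bar f$ cancels the difference $F_{\sX_1,\bar\Delta_1}\circ\bar f_\infty-F_{\sX_2,\bar\Delta_2}$ off a measure-zero set (the mechanism behind the Proposition above; compare \autoref{rem.StringyMotivesLogResolution}). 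Since the shift functions vanish on untwisted arcs, the change of variables formula recalled above yields
\[
M_\st^\utd(\sX_2,\bar\Delta_2)=\int_{J_\infty\sX_2}\bL^{F_{\sX_2,\bar\Delta_2}}\,\mathrm d\mu_{\sX_2}=\int_{\bar f_\infty(J_\infty\sX_2)}\bL^{F_{\sX_1,\bar\Delta_1}}\,\mathrm d\mu_{\sX_1}.
\]

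Finally, since $\bar f_\infty(J_\infty\sX_2)\subseteq J_\infty\sX_1$ and the integrand has everywhere positive Poincar\'e realization, subtracting from $M_\st^\utd(\sX_1,\bar\Delta_1)=\int_{J_\infty\sX_1}\bL^{F_{\sX_1,\bar\Delta_1}}\mathrm d\mu_{\sX_1}$ gives
\[
M_\st^\utd(\sX_1,\bar\Delta_1)-M_\st^\utd(\sX_2,\bar\Delta_2)=\int_{J_\infty\sX_1\setminus\bar f_\infty(J_\infty\sX_2)}\bL^{F_{\sX_1,\bar\Delta_1}}\,\mathrm d\mu_{\sX_1}\ \ge\ 0,
\]
with equality iff the domain has measure zero. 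As $f$ is $G_2$-equivariant, $f_\infty(J_\infty X_2)$ is $G_1$-invariant, so $J_\infty\sX_1\setminus\bar f_\infty(J_\infty\sX_2)=\big(J_\infty X_1\setminus f_\infty(J_\infty X_2)\big)/G_1$, whose measure equals $\mu_{X_1}\big(J_\infty X_1\setminus f_\infty(J_\infty X_2)\big)/G_1$ by \autoref{eqn.MeasureOfQuotient}; this is positive exactly when $\mu_{X_1}\big(J_\infty X_1\setminus f_\infty(J_\infty X_2)\big)$ is, because $\{V\}\mapsto\{V/G_1\}$ preserves dimension and the (non)emptiness of varieties, hence positivity of Poincar\'e realizations. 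This is the asserted criterion; the pointed statement is identical with $(J_\infty X_i)_{x_i}$ in place of $J_\infty X_i$ throughout. I expect the main obstacle to be steps (i)--(ii): checking that the non-representable $\bar f$ nevertheless restricts to an almost geometrically injective crepant map on untwisted arc spaces, so that the change of variables formula applies cleanly; everything else is bookkeeping with the quotient operation $(-)/G$ and with the vanishing of the shift functions on untwisted arcs.
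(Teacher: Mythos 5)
Your proof is correct and takes essentially the same approach as the paper: both derive the inequality by applying the change of variables formula to the induced morphism $[X_2/G_2]\to[X_1/G_1]$ and expressing the difference $M_\st^\utd(\sX_1,\bar\Delta_1)-M_\st^\utd(\sX_2,\bar\Delta_2)$ as a motivic integral over $J_\infty\sX_1\setminus\bar f_\infty(J_\infty\sX_2)$, then transferring positivity through \autoref{eqn.MeasureOfQuotient}. The paper's proof is terse and leaves the construction of $\bar f$ and the verification of the change-of-variables hypotheses implicit; your steps (i) and (ii) are exactly that implicit content spelled out, so there is no substantive difference in route.
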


\begin{proof}
From the change of variables formula,
\[
M_{\st}^{\utd}(\sX_1,\bar{\Delta}_1)
-
M_{\st}^{\utd}(\sX_2,\bar{\Delta}_2)
= \int_{J_\infty \sX_1 \setminus f_\infty(J_\infty  \sX_2)}\bL^{F_{\sX_1,\bar{\Delta}_1} +\mathfrak{s}_{\sX_1}}\mathrm{d}\mu_{\sX_1}\ge 0.
\]
The inequality is strict if $J_\infty \sX_1 \setminus f_\infty(J_\infty  \sX_2)$ has positive measure. By \autoref{eqn.MeasureOfQuotient}, this is the case if $J_\infty X_1 \setminus f_\infty (J_\infty X_2)$ has positive measure. The local statements are shown likewise. 
\end{proof}
\subsection{Explicit description for log terminal surface singularities} \label{sec.ExplicitDescriptionSurfaceCase}

In this section, we compute $M_{\st}(X)_x/G$ explicitly for a log terminal surface singularity $x\in X$ with a given action by a finite group $G$ fixing $x$. Our results will be analogous to those in \autoref{ex.Surfaces} but we will need to use $G$-equivariant minimal log resolutions instead. 

Recall that if $(X,\Delta)$ is a KLT surface pair then $X$ is necessarily $\bQ$-factorial and so log terminal; see \cite[Corollary 4.11]{TanakaMMPExcellentSurfaces}, \cite[Corollary 2.35]{KollarMori}. Thus, since we are ultimately interested in the topology around $x \in X$, there is no harm in assuming $\Delta = 0$. This will turn out being a considerable simplification on the possible shapes of the dual graph associated to the minimal resolution (i.e., in the terminology of\cite[\S 2.2]{KollarSingulaitieofMMP}, we shall avoid dealing with \emph{extended dual graphs}). Further, we may shrink $X$ such that $X\setminus \{x\}$ is nonsingular. In such case:
\begin{equation} \label{eqn.GlobalvsLocalSurfaceCase}
M_{\st}(X)/G = \big\{\big(X \setminus \{x\}\big)/G \big\} + M_{\st}(X)_x/G. 
\end{equation}
More generally, if $X \setminus \{x_1,\ldots, x_k\}$ is regular and $G$ acts on $\{x_1, \ldots, x_k\}$ then from \autoref{additive},
\[
M_{\st}(X)= \bigl\{X\setminus \{x_1,\ldots, x_k\}\bigr\} + \sum_{i=1}^k M_{\st}(X)_{x_i}
\]
and so
\begin{equation} \label{eqn.GlobalvsLocalSurfaceCaseManyPoints}
M_{\st}(X)/G= \bigl\{\bigl(X\setminus \{x_1,\ldots, x_k\}\bigr)/G\bigr\} + \sum_{j=1}^l M_{\st}(X)_{x_j}/\Stab(x_j), 
\end{equation}
where the $x_1, \ldots, x_l$ are representatives for the orbits of the action of $G$ on $\{x_1, \ldots, x_k\}$; in particular $\{x_1,\ldots, x_k\} = \bigsqcup_{j=1}^l  G x_j$.

Let us commence by making a general remark on how a $G$-equivariant log resolution can be used to compute $M_{\mathrm{st}}(X)_x/G$.
Let us take a $G$-equivariant log resolution $\phi\colon \tilde{X}\to (x,X)$ and use the notation of \autoref{rem.StringyMotivesLogResolution} (for $I$, $E_i$, $a_i$, and so on). Following \cite[Definition 5.1]{batyrevNonArchimedeanIntegralsStringy1999}, we introduce the following notion:
\begin{definition}
We say that $\phi$ is \emph{$G$-normal} if, for every node $w\in E$,
the stabilizer $\Stab(w)$ preserves each of the two irreducible components
$E_{i}$ including $w$. In terms of the dual graph $\Gamma$ of $E$, this means that if an
element $g\in G$ fixes some edge $e$ then it fixes the two vertices adjoint to $e$ (instead of swapping them). 
\end{definition}

\emph{Let us suppose that $\phi$ is $G$-normal}. Then, for all $\iota \in I/G$, the prime divisors
$\{E_{i}\}_ {i\in\iota}$ are mutually disjoint. Here, we think of $\iota$ as an orbit of $G$ acting on $I$. Thus, we may define 
\[
E_{\iota}\coloneqq\bigsqcup_{i\in\iota}E_{i}\text{ and }
E_{\iota}^{\circ}\coloneqq E_{\iota} \Biggm\backslash \bigcup_{\kappa\in(I/G)\setminus\{\iota\}}E_{\kappa}.
\]
Observe that, for all $\iota \in I/G$, the prime divisors $\{E_{i}\}_{i\in\iota}$ have the same log discrepancy; which we denote by $a_{\iota}$ (i.e., $a_{\iota} = a_i$ for all $i \in \iota$). Our general remark is the following:

\begin{lemma} \label{lem.SurfaceCaseDescription}
With notation and hypotheses as above, the following formula holds
\begin{align*}
M_{\st}(X)_x & /G=\sum_{\iota\in I/G}\{E_{\iota}^{\circ}/G\}\frac{\bL-1}{\bL^{a_{\iota}}-1}+\sum_{\{\iota,\kappa\}\subset I/G}\{(E_{\iota}\cap E_{\kappa}) /G\}\frac{(\bL-1)^{2}}{(\bL^{a_{\iota}}-1)(\bL^{a_{\kappa}}-1)},
\end{align*}
where $\{\iota,\kappa\}$ runs over the subsets of $I/G$ with two elements.
\end{lemma}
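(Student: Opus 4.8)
The plan is to compute $M_{\st}(X)_x/G$ by starting from the standard formula for the stringy motive of a log resolution (as recalled in \autoref{rem.StringyMotivesLogResolution}), lifting it to the $G$-equivariant Grothendieck ring, and then carefully grouping the strata $E_J^\circ$ according to the orbits of $G$ acting on the index set $I$. Since $\phi$ is assumed $G$-normal, the only nonempty intersections $E_i \cap E_j$ occur for $i,j$ in distinct orbits (two divisors in the same orbit are mutually disjoint), so the only subsets $J \subset I$ contributing to the sum $\sum_{J\subset I}\{E_J^\circ\}\prod_{j\in J}\frac{\LL-1}{\LL^{a_j}-1}$ are $J = \emptyset$, $J = \{i\}$ a singleton, and $J = \{i,j\}$ with $i,j$ in different orbits (here we also use $\dim X = 2$, so no triple intersections occur). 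First I would record this reduction explicitly.

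Next I would organize the singleton contributions. For a fixed orbit $\iota \in I/G$, the divisors $\{E_i\}_{i\in\iota}$ all share the same log discrepancy $a_\iota$, and $G$ permutes the locally closed pieces $E_i^\circ \coloneqq E_i \setminus \bigcup_{j\neq i} E_j$ transitively within the orbit; by $G$-normality their union is exactly $E_\iota^\circ$ (the open part of the disjoint union $E_\iota = \bigsqcup_{i\in\iota}E_i$ away from all the other orbits). Passing to the quotient, $\bigsqcup_{i\in\iota}\{E_i^\circ\}$ as a $G$-equivariant class maps to $\{E_\iota^\circ/G\}$ under $-/G$, so the total singleton contribution is $\sum_{\iota\in I/G}\{E_\iota^\circ/G\}\frac{\LL-1}{\LL^{a_\iota}-1}$, since the coefficient $\frac{\LL-1}{\LL^{a_i}-1}$ is constant on each orbit and is a scalar (invariant) in the ring. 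The double-intersection contributions are handled the same way: for a two-element subset $\{\iota,\kappa\}\subset I/G$, the pieces $E_i\cap E_j$ with $i\in\iota$, $j\in\kappa$ are permuted by $G$ and their disjoint union is $E_\iota\cap E_\kappa$; collecting them gives $\{(E_\iota\cap E_\kappa)/G\}\frac{(\LL-1)^2}{(\LL^{a_\iota}-1)(\LL^{a_\kappa}-1)}$. Summing over all such pairs yields the second term of the claimed formula.

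The main thing to be careful about — and the only real obstacle — is justifying that the operation $-/G$ (induced by $\{V\}\mapsto\{V/G\}$ on quasi-projective $G$-varieties) is additive over the $G$-stable stratification and commutes with multiplication by the scalar factors $\frac{\LL-1}{\LL^{a}-1}$. Additivity over $G$-stable locally closed decompositions holds because the quotient of a $G$-stable stratification is a stratification of the quotient, which is how the $G$-equivariant class and the map $-/G$ are set up in \autoref{sec.StringyMotivicInvaraints}; and the scalar factors lie in the image of $\hat{\cM}'_{\kay,r}$ with trivial $G$-action, for which $-/G$ is just the identity, so they pull out of the sum. Once these two formal points are in place, the computation is a matter of reindexing $\sum_{J\subset I}$ by orbit-type as above, and the formula in \autoref{lem.SurfaceCaseDescription} follows directly. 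I would also note in passing that the equivariant class being used is the canonical lift of $M_{\st}(X)_x$ described after \autoref{rem.StringyMotivesLogResolution}, computed on the $G$-equivariant log resolution $\phi$ via the equivariant change of variables formula, so that passing to $-/G$ at the end is legitimate.
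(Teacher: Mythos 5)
Your overall strategy — decompose into strata by $G$-orbit, use $G$-normality to control intersections, and then pass to the quotient — is the same as the paper's, and the bookkeeping about $E_\iota^\circ$, disjointness within orbits, and the reindexing by pairs $\{\iota,\kappa\}$ is fine. However, there is a genuine gap in the key step where you assert that the canonical $G$-equivariant lift of $M_{\st}(X)_x$ has the form $\sum_{\iota}\{E_\iota^\circ\}\tfrac{\LL-1}{\LL^{a_\iota}-1}+\cdots$ with the factor $\tfrac{\LL-1}{\LL^{a_\iota}-1}$ sitting in the trivially-$G$-acting subring. That lift is not obtained by naively lifting the non-equivariant closed formula; it is the $G$-equivariant motivic integral, and when one decomposes the arc space into $G$-stable cylinders $C_{\iota,n}$ (arcs meeting $E_\iota$ with order exactly $n$ and no other divisor), the equivariant measure $\mu_{\tilde X}(C_{\iota,n})$ is the class $\{\overline{C_{\iota,n}}\}\LL^{-n\dim \tilde X}$ where $\overline{C_{\iota,n}}\to E_\iota^\circ$ is a $\GG_m$-bundle (times an affine space) on which $G$ acts \emph{nontrivially on the fibers} — via derivatives in the normal direction. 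So $\{\overline{C_{\iota,n}}\}$ is not, on the face of it, the product $\{E_\iota^\circ\}\cdot(\LL-1)\LL^N$ with the second factor trivially acting; you would need a $G$-equivariant trivialization of the bundle, which is not available. This is exactly the point your "pull the scalar out of $-/G$" move silently assumes.

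The paper's proof deals with this differently: it \emph{first} applies the quotient and only then invokes bundle relations. Concretely, it considers the cylinders $C_n$ and $C_{\ge n}$, takes their truncations $\overline{C_n}$, $\overline{C_{\ge n}}$ in $J_n\tilde X$, and observes that the \emph{quotients} $\overline{C_{\ge n}}/G$ and $(\overline{C_{\ge n}}-\overline{C_n})/G$ are respectively an $\AA^1$-bundle and a section over $E_\iota^\circ/G$. It then applies the affine-fibration relation built into the definition of the complete Grothendieck ring (in the \emph{non-equivariant} ring, applied to the quotient) to get $\{\overline{C_{\ge n}}/G\}=\{E_\iota^\circ/G\}\LL$ and hence $\{\overline{C_n}/G\}=\{E_\iota^\circ/G\}(\LL-1)$. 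Summing the geometric series in $n$ (and doing the analogous two-divisor case) recovers precisely the claimed formula. This "quotient first, apply the fibration relation second" ordering is the essential technical content of the proof and is what your proposal is missing; the property of $-/G$ that you do correctly cite (it is $\hat{\cM}'_{\kay,r}$-linear for trivially-acting classes) is not enough on its own, because the equivariant class you want to feed into it is not visibly a product of that shape.
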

\begin{proof}
This is basically the $g=1$ part of the formula defining the orbifold
$E$-function \cite[Definition 6.3]{batyrevNonArchimedeanIntegralsStringy1999}. 
Note also that the formula in the case $G=1$ is well-known and is a special case of \eqref{ex-formula}. Let us first recall how the formula in that case is deduced:
\begin{align*}
M_{\st}(X)_x 
& =\int_{(J_{\infty}X)_x}\bL^{F_{(X,0)}}\mathrm{d}\mu_{X}\\
& =\int_{(J_{\infty}\tilde{X})_{\phi^{-1}(x)}}\bL^{-\ord K_{\tilde{X}/X}}\mathrm{d}\mu_{\tilde{X}}\\
& =\sum_{\substack{i\in I\\ n\in \bN}}\{E_{i}^{\circ}\}(\bL-1)\bL^{-a_{i}n}+\sum_{\substack{\{i,j\}\subset I\\ n,m \in \bN }}\{E_{i}\cap E_{j}\}(\bL-1)^{2}\bL^{-na_{i}-ma_{j}}\\
& =\sum_{i\in I}\{E_{i}^{\circ}\}(\bL-1)\sum_{n\ge 0}\bL^{-a_{i}n}+\sum_{\{i,j\}\subset I}\{E_{i}\cap E_{j}\}(\bL-1)^{2}\sum_{n,m \geq 0}\bL^{-na_{i}-ma_{j}}\\
& =\sum_{i\in I}\{E_{i}^{\circ}\}\frac{\bL-1}{\bL^{a_{i}}-1}+\sum_{\{i,j\}\subset I}\{E_{i}\cap E_{j}\}\frac{(\bL-1)^{2}}{(\bL^{a_{i}}-1)(\bL^{a_{j}}-1)}.
\end{align*}
The second equality follows from the change of variables formula. For the third one, observe that the
term $\{E_{i}^{\circ}\}(\bL-1)\bL^{-a_{i}n}$ is the contribution
of arcs on $\tilde{X}$ meeting $E_{i}$ with order $n$ but
not meeting any other exceptional prime divisor: $\{E_{i}^{\circ}\}(\bL-1)\bL^{-n}$
is the measure of the set of those arcs (see \cite[Chapter 7, Lemma 3.3.3]{ChambertLoirNicaiseSebag}; note that our normalization of the motivic measure is different from the one in the cited reference by the factor of $\bL$ to the power of the dimension of the variety in question)
and since $b_i = a_i -1$ is the multiplicity of $E_i$ in $K_{\tilde{X}/X}$, then $\bL^{-b_{i}n}$ is the
value of the function $-\ord K_{\tilde{X}/X}$ there. Likewise, the
term $\{E_{i}\cap E_{j}\}(\bL-1)^{2}\bL^{-na_{i}-ma_{j}}$ is the
contribution of arcs that meet $E_{i}$ and $E_{j}$ with orders
$n$ and $m$ respectively: $\{E_{i}\cap E_{j}\}(\bL-1)^{2}\bL^{-2n}$ is the
measure of the set of those arcs  (again \cite[Chapter 7, Lemma 3.3.3]{ChambertLoirNicaiseSebag}) and $\bL^{-nb_{i}-mb_{j}}$ is the
value of the function $-\ord K_{\tilde{X}/X}$ there. The remaining two equalities are simple algebraic manipulations.

We now explain how we modify the above computation to get the formula of the lemma for a general finite group $G$. 
Let $\iota\in I/G$ and consider the set $C_{n}$ (resp. $C_{\ge n}$)
of arcs on $\tilde{X}$ that meet $E_{i}$ for some $i\in\iota$ with order $n$
(resp. $\ge n$) but do not meet any exceptional prime divisor (with index) not
belonging to $\iota$. These are cylinders of level $n$ and their
images $\bar{C}_n$ and $\bar{C}_{\ge n}$ in the $n$-th
jet scheme $J_{n}\tilde{X}$ are; respectively, a $\mathbb{G}_{\mathrm{m}}$-bundle and an $\bA^{1}$-bundle
over $E_{\iota}^{\circ}$. Therefore, 
\[
\{\bar{C}_{\ge n}/G\}=\{E_{\iota}^{\circ}/G\}\bL \text{ and } \left\{ \left(\bar{C}_{\ge n}-\bar{C}_n\right)/G\right\} =\{E_{\iota}^{\circ}/G\}
 \]
by the relation imposed in the definition of our complete Grothendieck
ring of varieties \cite[Definitions 9.5 and 9.6]{YasudaMotivicIntegrationDeligneMumfordStack}.
Hence, 
\[
\{\bar{C}_n/G\}=\{E_{\iota}^{\circ}/G\}(\bL-1).
\] 
This explains the first summation on the right-hand side in the formula of the lemma. The second summation is explained likewise.
\end{proof}

\subsubsection{On the $G$-normality of the minimal resolution of a surface}
\autoref{lem.SurfaceCaseDescription} raises the question of whether a $G$-normal log resolution exists. We discuss next the $G$-normality of the minimal resolution of $X$, say $\psi\colon X'\to (x,X)$; see \cite[Theorem 2.25]{KollarSingulaitieofMMP}. By
\cite[p. 123]{KollarSingulaitieofMMP} (also see \cite[\S3]{KollarFlipsAndAbundance}), the exceptional set of $\psi$ is a simple normal crossing divisor whose dual graph $\Gamma'$
is either a straight line (Figure \ref{fig:straight}) or has three
straight branches sticking out of a single vertex (Figure \ref{fig:three-branches}). In the latter case, we refer to the single vertex as the \emph{node.}
\begin{figure}[H]
\centering
\begin{minipage}{.5\textwidth}
  \centering
  \[
\xymatrix{\circ\ar@{-}[r] & \circ\ar@{-}[r] & \cdots & \circ\ar@{-}[l]}
\]
  \captionof{figure}{Straight dual graph.}
  \label{fig:straight}
\end{minipage}%
\begin{minipage}{.5\textwidth}
  \centering
  \[
\xymatrix{\circ\ar@{-}[r] & \cdots & \circ\ar@{-}[l]\ar@{-}[r]\ar@{-}[d] & \cdots & \circ\ar@{-}[l]\\
 &  & \vdots\\
 &  & \circ\ar@{-}[u]
}
\]
  \captionof{figure}{Three branches dual graph.}
  \label{fig:three-branches}
\end{minipage}
\end{figure}

We study the $G$-normality of $\psi$ by checking
three distinct cases separately:

\emph{The case where $\Gamma'$ is a straight line and has an odd number
of vertices:} In this case, $\Gamma'$ has only two automorphisms; namely,
the identity and the involution switching the two ends. The involution
does not fix any edge. Thus, the minimal resolution $\psi$ is $G$-normal (for all $G$)
and we choose it as our $G$-normal log resolution $\phi\colon\tilde{X}\to X$.

\emph{The case where $\Gamma'$ is a straight line and has an even
number of vertices:} As in the previous case, $\Gamma'$ has only two automorphisms. This time, however, the involution does fix the
middle edge and switches the two vertices adjacent to it. Hence, $\psi$ may not be $G$-normal. To fix it, we take the
blowup $\tilde{X}\to X'$ at the point corresponding to the middle
edge. The dual graph associated to the resulting log resolution $\phi\colon\tilde{X}\to X$
is a straight line with an odd number of vertices. As before, $\phi$ is $G$-normal and we choose it as our $G$-normal log
resolution. 

\emph{The case where $\Gamma'$ has three branches:} Any automorphism
of $\Gamma'$ fixes the node. If it fixes some edge $e$, then it
fixes the branch including $e$. Since one end of the branch; the
node, is fixed, the automorphism fixes all the edges and the vertices
in the branch. In particular, it fixes the two vertices adjacent to
$e$. This shows that the minimal resolution $\psi\colon X'\to X$
is $G$-normal. We choose $\psi$ to be our log resolution $\phi\colon\tilde{X}\to X$. 

Summing up, we have constructed a $G$-normal log resolution $\phi\colon\tilde{X}\to X$
in each of the above three cases, which is the minimal resolution possibly
followed by the blowup at a $G$-fixed point. Furthermore, it is independent of the group action and is minimal among $G$-normal log resolutions.
\begin{definition}
We refer to $\phi \colon \tilde{X} \to (x,X)$ as the \emph{modified minimal resolution
of $x\in X$}.
\end{definition}
According to \cite[Claim 2.26.4, p. 56]{KollarSingulaitieofMMP},
an exceptional prime divisor of the minimal resolution $\psi\colon X'\to X$
has log discrepancy $\leq 1$. In the case where $\tilde{X}$ is a one-point
blowup of $X'$, then the exceptional divisor of this blowup has log
discrepancy $\leq 2$ over $X$. Thus:
\begin{lemma}
\label{lem:log discrep at most 1}All but one exceptional prime divisors
of the modified minimal resolution have log discrepancy $\le1$. Moreover,
the possible exception has log discrepancy $\le2$. 
\end{lemma}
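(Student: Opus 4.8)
The plan is to follow the case division already set up in the construction of the modified minimal resolution $\phi\colon \tilde X\to (x,X)$, and in each case to reduce the claim to Kollár's bound \cite[Claim 2.26.4, p.~56]{KollarSingulaitieofMMP} that every exceptional prime divisor of the \emph{minimal} resolution $\psi\colon X'\to X$ has log discrepancy $\le 1$ over $X$.

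First, whenever $\phi=\psi$ — that is, in the straight-line-with-an-odd-number-of-vertices case and in the three-branches case — every exceptional prime of $\phi$ is an exceptional prime of $\psi$, so by Kollár's bound all of them have log discrepancy $\le 1$ and there is in fact no exceptional divisor of log discrepancy exceeding $1$; the statement then holds vacuously for the ``possible exception''.

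The only remaining case is $\phi=\psi\circ\sigma$, where $\sigma\colon\tilde X\to X'$ is the blowup of the smooth surface $X'$ at the point $p$ corresponding to the middle edge of the straight dual graph with an even number of vertices; after relabelling the exceptional primes $E_i$ of $\psi$, we may assume $p=E_1\cap E_2$, and we let $E$ denote the $\sigma$-exceptional curve. The exceptional primes of $\phi$ are then $E$ together with the strict transforms $\tilde E_i$ of the $E_i$. I would compute the relative canonical divisor directly: pulling back the identity $K_{X'}=\psi^*K_X+\sum_i b_iE_i$ (with $b_i=a_i-1$) along $\sigma$, using $K_{\tilde X}=\sigma^*K_{X'}+E$ and $\sigma^*E_i=\tilde E_i+(\mult_p E_i)E$, and using that the $\psi$-exceptional locus is simple normal crossing so that $\mult_pE_1=\mult_pE_2=1$ while $\mult_pE_i=0$ for $i\ne 1,2$, one obtains
\[
K_{\tilde X}=\phi^*K_X+\sum_i b_i\tilde E_i+(b_1+b_2+1)E .
\]
Hence each $\tilde E_i$ carries the same coefficient $b_i$, so the same log discrepancy $a_i\le 1$, as $E_i$, while $E$ has log discrepancy $(b_1+b_2+1)+1=a_1+a_2$, which is $\le 2$ since $a_1,a_2\le 1$ by Kollár's bound. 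Thus $E$ is the unique possible exceptional divisor of log discrepancy exceeding $1$, and its log discrepancy is at most $2$.

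I do not expect a genuine obstacle here: the argument is a one-line discrepancy computation for a single point blowup on a smooth surface, and the only external input is the bound of $1$ on log discrepancies of minimal-resolution exceptional divisors, which is invoked as a black box. The only point requiring (trivial) care is that $p$ is genuinely a node of a simple normal crossing divisor, so that the two multiplicities $\mult_pE_i$ entering the computation are exactly $1$ — and this is guaranteed by the SNC property of the minimal-resolution exceptional locus recorded above.
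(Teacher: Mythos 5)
Your proof is correct and follows the same route as the paper: invoke Kollár's bound for the minimal resolution and account for the one extra divisor arising from the mid-edge blowup. The paper merely asserts that the blowup exceptional divisor has log discrepancy $\le 2$; your explicit discrepancy computation (giving exactly $a_1+a_2$) fills in the detail the paper leaves implicit, but adds nothing conceptually new.
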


With the above in place, we can now specialize the computation of $M_{\st}(X)_x/G$ in \autoref{lem.SurfaceCaseDescription} to the case where $\phi$ is the modified minimal resolution. Let $\Gamma$ be the dual graph associated to the modified minimal resolution $\phi\colon\tilde{X}\to X$. Note that $\Gamma$ also has one of the forms of either \autoref{fig:straight} or \autoref{fig:three-branches} and the same is true for the quotient $\Gamma/G$.  We may think of a vertex $\iota \in I/G$ as a set of orbits of vertices of $I$. Note that two vertices $i, j \in \iota$ correspond to disjoint exceptional prime divisors (precisely because the modified minimal resolution is $G$-normal). Further, the log discrepancies are constant across $G$-orbits, i.e., $a_i=a_j$ if $i,j \in \iota$. This let us define the quotient log discrepancy $a_{\iota}$ of $\iota \in I/G$. An edge $[\iota,\kappa] \in H/G$ of $\Gamma/G$ corresponds to at least one vertex $i \in \iota$ being connected to a vertex in $j \in \kappa$ (by an edge $[i,j] \in H$). Note that the set of edges $\{[i,j] \in H \mid i\in \iota, j\in \kappa\}$ corresponds (bijectively) to the intersection points of $E_{\iota} \coloneqq \bigsqcup_{i \in \iota} E_i$ and $E_{\kappa} \coloneqq \bigsqcup_{j \in \kappa} E_j$, which get all identified under the action of $G$. Thus, an edge $[\iota,\kappa]$ of $\Gamma/G$ corresponds to $E_{\iota} \cap E_{\kappa} \neq \emptyset$ and $\{(E_{\iota} \cap E_{\kappa})/G\} = \delta_{\iota, \kappa} \in \hat{\sM}'_{\kay,r}$ (where $\delta_{\iota,\kappa}$ is Kronecker's delta) as there is only one edge connecting two vertices in $\Gamma/G$. Further, we see that $\{(E_{\iota} \setminus \bigcup_{\kappa \neq \iota} E_{\kappa})/G\}\in \hat{\sM}'_{\kay,r}$ equals $\bL+1 - m_{\iota}$ where $m_{\iota}$ is the number of edges of $\Gamma/G$ sticking out of $\iota$---this uses L\"uroth's theorem to see that $(E_{\iota} \setminus \bigcup_{\kappa \neq \iota} E_{\kappa})/G$ are rational curves.

In conclusion:
\begin{equation} \label{eqn.DescriptionOfStringyInvariantsForCurves}
M_{\st}(X)_x/G = \sum_{\iota\in I/G} (\bL +1 - m_{\iota}) \frac{\bL-1}{\bL^{a_{\iota}}-1}+\sum_{[\iota,\kappa]\in H/G}\frac{(\bL-1)^{2}}{(\bL^{a_{\iota}}-1)(\bL^{a_{\kappa}}-1)} \in \bZ\llparenthesis\bL^{-1/r} \rrparenthesis \subset \hat{\sM}'_{\kay,r}.
\end{equation}

\begin{example}
Rational double points (also known as Du Val singularities) are exactly the canonical surface singularities or, equivalently, the rational Gorenstein surface singularities. These can be further characterized as those surface singularities whose minimal resolution is crepant. As such, the dual graphs associated to their minimal resolution are the simply laced Dynkin diagrams. See \cite[Example 3.26]{KollarSingulaitieofMMP} for further details. Following \cite[\S 3]{ArtinCoveringsOfTheRtionalDoublePointsInCharacteristicp}, it is standard to denote the formal germ at a rational double point by $X_n^r$ where $X\in \{A,D,E\}$ and $n,r$ vary in between certain intervals of nonnegative integers. However, if $p\geq 7$, the index $r$ is superfluous and in that case we only have the so-called standard forms of the rational double points. Else; $p=2,3,5$, we obtain an interesting zoo of non-standard forms; \cite[Ibid.]{ArtinCoveringsOfTheRtionalDoublePointsInCharacteristicp} for details. In general, we let $X_n=X^0_n$ denote the standard forms, which can be realized as linear quotients $\hat{\bA}^2/G$ by a finite subgroup $G \subset \mathrm{SL}_2(\kay)$. As far as stringy motives are concerned, what matters is that the dual graph of the minimal resolution of $X_n^r$ is the simply laced Dynkin diagram $X_n$ with vertices of weight $2$. In particular, the vertices correspond to rational smooth curves intersecting transversely at exactly one point on the minimal resolution.  Let us denote by $M_{\mathrm{st}}(X_n^r)$ the stringy invariant $M_{\mathrm{st}}(X)_x$ associated to a rational double point $x \in X$. Since the minimal resolution $\tilde{X} \to (x, X)$ is crepant (i.e., $a_i=1$), $M_{\mathrm{st}}(X_n^r)$ equals the motive associated to the exceptional set of curves (which is a tree of $n$ $\mathbb{P}^1$'s intersecting as prescribed by the diagram $X_n$). An easy calculation then shows:
\[
M_{\mathrm{st}}(X_n^r) = n \bL + 1.
\]
Further, using \autoref{eqn.GlobalvsLocalSurfaceCase}, we see that:
\[
M_{\st}(\bA^2/G) = \big\{ \bA^2/G \setminus \{0\}  \big\} + M_{\st}(X_n) = \{\bA^2/G \} -1 + n \bL +1 = \bL^2+n\bL 
\]
for the standard forms. Here, we used $\{\bA^2/G \} = \bL ^2$, which follows from the relations imposed in the definition of $\hat{\sM}'_{\kay,r}$. 

It is well-known that there are many Galois quasi-\'etale covers among rational double points; see \cite[p. 15]{CarvajalMaPosltraSchwedeTickerCoversOfRationalDoublePoints}. For instance, if $p \neq 3$, there is a degree-$3$ Galois quasi-\'etale cover $D_4 \to E_6$. Let us consider the action of $\bZ/3$ on $D_4$. Recall that the dual graph of $D_4$ is the one in \autoref{fig:D_4}. Then, one readily sees that the action of $\bZ/3$ on this graph is given by fixing the node and permuting the other three vertices cyclically. The quotient graph is then displayed in \autoref{fig:quotientGraph}. One vertex represents a copy of $\mathbb{P}^1$ (which is the quotient of the disjoint union of three $\mathbb{P}^1$'s under the trivial cyclic action) whereas the other vertex represents the quotient of $\mathbb{P}^1$ under the action of $\bZ/3$ (where the points $0,1, \infty$ are being permuted cyclically). This let us conclude that
\[
M_{\mathrm{st}}(D_4)/(\bZ/3) = \bL +  \Bigl\{\bigl(\mathbb{P}^1\setminus \{0,1,\infty\} \bigr)\bigm/{(\bZ/3)}\Bigr\} + 1 = 2 \bL + 1.
\]
\begin{figure}[H]
\centering
\begin{minipage}{.5\textwidth}
  \centering
  \[
\xymatrix{\circ\ar@{-}[r]  & \circ\ar@{-}[l]\ar@{-}[r]\ar@{-}[d]  & \circ\ar@{-}[l]\\
   & \circ\ar@{-}[u] 
}
\]
  \captionof{figure}{Dual graph of $D_4$.}
  \label{fig:D_4}
\end{minipage}%
\begin{minipage}{.5\textwidth}
  \centering
  \[
\xymatrix{\circ\ar@{-}[r] & \circ }
\]
  \captionof{figure}{Quotient of the dual graph of $D_4$ by the action of $\bZ/3$.}
  \label{fig:quotientGraph}
\end{minipage}
\end{figure}
Similarly, if $p \neq 2$, there is an action of $\bZ/2$ on $E_6$ whose quasi-\'etale quotient is $E_7$. To compute $M_{\mathrm{st}}(E_6)/(\bZ/2)$, we note that the induced action of $\bZ/2$ on the dual graph of $E_6$---being it depicted in \autoref{fig:E_6}---is given by reflection across the symmetry axis. This implies that the quotient graph is the one in \autoref{fig:E_6Quotient}. 
\begin{figure}[H]
\centering
\begin{minipage}{.5\textwidth}
  \centering
  \[
\xymatrix{
\circ\ar@{-}[r] & \circ\ar@{-}[r]  & \circ\ar@{-}[l]\ar@{-}[r]\ar@{-}[d]  & \circ\ar@{-}[l] & \circ\ar@{-}[l]\\
 &  & \circ\ar@{-}[u]  &
}
\]
  \captionof{figure}{Dual graph of $E_6$.}
  \label{fig:E_6}
\end{minipage}%
\begin{minipage}{.5\textwidth}
  \centering
  \[
\xymatrix{\circ\ar@{-}[r] & \circ\ar@{-}[r] & \circ & \circ\ar@{-}[l]}
\]
  \captionof{figure}{Quotient of the dual graph of $E_6$ by the action of $\bZ/2$.}
  \label{fig:E_6Quotient}
\end{minipage}
\end{figure}

As before, we then have:
\[
M_{\mathrm{st}}(E_6)/(\bZ/2) = \bL +  \Bigl\{\bigl(\mathbb{P}^1\setminus \{0,1,\infty\} \bigr)\bigm/{(\bZ/2)}\Bigr\} + \bL-1 + \bL +3 = 4\bL+1.
\]
Finally, we consider the example $A_{2n-5} \xrightarrow{\bZ/2} D_n$ where $p \neq 2$ and $n\geq 4$. The dual graph is as in \autoref{fig:straight} with $2n-5$ vertices, which is an odd number. The only automorphism of such graph is the one switching the two branches (with $n-3$ vertices each) sticking out of the middle point which remains fixed under the action. Thus, the quotient graph is another straight graph but with $n-2$ vertices. By a similar computation to the ones above, we get:
\[
M_{\mathrm{st}}(A_{2n-5})/(\bZ/2) = (n-2) \bL+1.
\]
There is also the trivial example $A_0 \xrightarrow{G} X_n$ whenever $(p,\#G)=1$. Since $A_0$ is the smooth germ, $M_{\st}(A_0)=1$ and so $M_{\st}(A_0)/G = 1$ for all $G$.

Observe that we have built a tower (for $p \neq 2,3$):
\[
E_7 \xleftarrow{\bZ/2} E_6 \xleftarrow{\bZ/3} D_4 \xleftarrow{\bZ/2} A_3 \xleftarrow{\bZ/4} A_0,
\]
where the displayed arrows are all Galois quasi-\'etale covers whose Galois group is the one displayed on top of them. Note that the composite arrows $E_7 \xleftarrow{ S_3 } D_4$ and the ones $X_n \leftarrow A_0$ (i.e., those with $A_0$ as the source) are all Galois quasi-\'etale covers as well (with Galois group displayed on top). Here, $S_n$ denotes the $n$-th symmetric group. Nonetheless, the composite arrow $E_6 \leftarrow A_3$ is not Galois although it is quasi-\'etale.\footnote{Indeed, if it were Galois its Galois group would have to be cyclic as $S_3$ does not have $\bZ/2$ as a normal subgroup. In that case, $\bZ/3$ would have to act non-trivially on $A_3$ which is impossible as it cannot act non-trivially on its dual graph.} In particular,
\[
E_7 \xleftarrow{} E_6 \xleftarrow{} D_4 \leftarrow A_0
\]
is a tower of Galois quasi-\'etale covers inducing (by the computations we had done above) the following sequence on stringy motives
\[
M_{\mathrm{st}}(E_7)/1 = 7 \bL + 1 > M_{\mathrm{st}}(E_6)/(\bZ/2) = 4 \bL+1 > M_{\mathrm{st}}(D_4)/S_3 = 2 \bL+1 > M_{\mathrm{st}}(A_0)/\text{BO} = 1, 
\]
where $\text{BO}$ denotes the binary octahedral group. 
\end{example}
\begin{example} \label{ex.ConesOverRationalNormalCurves}
Let $X_d$ be the (affine) cone over the rational normal curve $C \subset \bP^d$ and $0\in X_d$ be its vertex. As we had seen in \autoref{ex.ConesOverFanos}, $M_{\st}(X_d)_0 = (\bL^2-1)/(\bL^{2/d}-1)$ and its minimal resolution consists of blowing up $0 \in X_d$ so that its dual graph is just one vertex. On the other hand, we may think of $X_d$ as a quotient singularity $\bA^2/\bm{\mu}_d$ as it is the spectrum of the $d$-th Veronese subring of $\kay[x,y]$. In fact, $\bA^2 \to X_d$ is a connected $\bm{\mu}_d$-torsor over $X_d \setminus \{0\}$. Thus, if $d=nm$, we may think of $X_d$ as a quotient $X_n/\bm{\mu}_m$. Assume now $p\nmid m$. By the triviality of the dual graphs in this example, $M_{\st}(X_n)_0/\bm{\mu}_m = M_{\st}(X_n)_0$. Furthermore,
\begin{align*}
M_{\st}(X_d)_0 = \frac{\bL^2-1}{\bL^{2/d}-1} = (\bL^2-1)\sum_{i=1}^{\infty} \bL^{\frac{-2i}{d}} &= (\bL^2-1)\sum_{i=1}^{\infty} \bL^{\frac{-2im}{d}} + (\bL^2-1)\sum_{(i,m)=1} \bL^{\frac{-2i}{d}}\\
&= M_{\st}(X_n)_0 +(\bL^2-1)\sum_{(i,m)=1} \bL^{\frac{-2i}{d}} > M_{\st}(X_n)_0.
\end{align*}
\end{example}

\section{Strict Descent in the Presence of Ramification} \label{sec.StrictDecentRamification}
In this section, we establish the core result of this work. That is, we explain why stringy motives get smaller across ramified Galois quasi-\'etale covers. The main step is to show that there is an abundance of non-liftable arcs. \subsection{Abundance of non-liftable arcs} Let us consider the following setup.
\begin{setup} \label{set:MainSetup}
Let $(X,\Delta_X)$ be a $d$-dimensional KLT log pair. Let $g \colon (Y,\Delta_Y) \to (X,\Delta_X)$ be a Galois quasi-\'etale log-cover with Galois group $G$. As customary, $g_{\infty}  \colon J_{\infty} Y \to J_{\infty} X$ is the induced map by functoriality. Further, let $\phi \colon (\tilde{X}, \Delta_{\tilde{X}}) \to (X,\Delta)$ be a log resolution; where we mean that $\phi$ is crepant and $\phi_* \Delta_{\tilde{X}} = \Delta_X$. Let $\psi \colon (\tilde{Y},\Delta_{\tilde{Y}}) \to (Y,\Delta_Y)$ be the normalization of $\phi$ along $g$ (i.e., with respect to $K(Y)$) and $\tilde{g} \colon (\tilde{Y}, \Delta_{\tilde{Y}}) \to (\tilde{X},\Delta_{\tilde{X}})$ be the induced morphism. Schematically:
\begin{equation} \label{eqn.CommSquareSetup}
\xymatrix{
(\tilde{X},\Delta_{\tilde{X}}) \ar[d]_-{\phi} & (\tilde{Y},\Delta_{\tilde{Y}}) \ar[d]^-{\psi} \ar[l]_-{\tilde{g}} \\
(X,\Delta_X) & (Y,\Delta_Y) \ar[l]_-{g}
}
\end{equation}
Note that $\Delta_{\tilde{Y}}$ is defined as the $\bQ$-divisor on $\tilde Y$ that makes both $\psi$ and so $\tilde{g}$ crepant and $\psi_* \Delta_{\tilde{Y}} = \Delta_Y$. Also, $\psi$ is a proper birational morphism whereas $\tilde{g}$ is a $G$-cover; i.e., a finite cover whose extension of function fields is Galois with Galois group $G$.
\end{setup}

\begin{remark}[On the nature of \'etaleness for $G$-covers]
It is well-known to experts that a finite dominant morphism $g\colon Y \to X$ between smooth varieties is \'etale if and only if it is unramified. In fact, for this to work, we only need $X$ and $Y$ to be; respectively, regular and Cohen--Macaulay. Indeed, one uses \cite[Corollary 18.17]{EisenbudCommutativeAlgebraWithAView} to conclude that finiteness implies flatness in that case. In particular, if $g$ is further a $G$-cover, then $g$ is \'etale if and only if for all $\kay$-points $x\in X$ the set-theoretic fiber $Y_x(\kay)$ is a $G$-torsor under the induced action. Nevertheless, the same principle works with no hypotheses whatsoever on the singularities of $Y$ and $X$ except for normality. Indeed:
\begin{claim} \label{claim:NatureOfEtaleNessGCovers}
With notation as above, suppose that $g$ is a $G$-cover between normal varieties. If the set-theoretic fiber $Y_x(\kay)$ is a $G$-torsor for all $x \in X(\kay)$ then $g$ is \'etale.
\end{claim}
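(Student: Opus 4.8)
My plan would be to test \'etaleness one closed point of $X$ at a time, translating the torsor hypothesis into a statement about decomposition groups and then into an isomorphism of completed local rings. Since $g$ is finite and dominant it is surjective, so for a closed point $x\in X(\kay)$ the fibre $F_x\coloneqq g^{-1}(x)(\kay)$ (which is just the set of points of $Y$ over $x$, as $\kay$ is algebraically closed) is nonempty; by hypothesis $G$ acts on $F_x$ freely and transitively. For $y\in F_x$ let $D_y\coloneqq\Stab_G(y)$ be its decomposition group. Freeness forces $D_y=\{e\}$ for every $y\in F_x$ --- this is the only place the torsor assumption enters.

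Next I would descend to completions. The semilocalization $\sO_{Y}\otimes_{\sO_X}\sO_{X,x}$ is module-finite over $\sO_{X,x}$, carries a $G$-action with invariant ring $\sO_{X,x}$, and its $\mathfrak{m}_x$-adic completion is $\sO_{Y}\otimes_{\sO_X}\hat{\sO}_{X,x}$, which by the structure theory of complete semilocal rings (here $\mathfrak{m}_x$ generates a $\Rad$-primary ideal, the fibre being a finite discrete $\kay$-scheme) splits as $\prod_{y\in F_x}\hat{\sO}_{Y,y}$. Because $\hat{\sO}_{X,x}$ is flat over $\sO_{X,x}$, taking $G$-invariants commutes with $(-)\otimes_{\sO_{X,x}}\hat{\sO}_{X,x}$, so
\[
\hat{\sO}_{X,x}\;=\;\Bigl(\,\textstyle\prod_{y\in F_x}\hat{\sO}_{Y,y}\Bigr)^{G},
\]
and since $G$ permutes the factors freely and transitively, this invariant subring is the diagonal copy of a single factor. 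Hence, for every $y\in F_x$, the natural map $\hat{g}^{\#}\colon\hat{\sO}_{X,x}\to\hat{\sO}_{Y,y}$ is an isomorphism.

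It then remains to deduce \'etaleness of $g$ at each such $y$. Localizing the displayed product at $\mathfrak{m}_y$ gives $\sO_{Y,y}\otimes_{\sO_{X,x}}\hat{\sO}_{X,x}\cong\hat{\sO}_{Y,y}$, which is flat (indeed free) over $\hat{\sO}_{X,x}$ by the previous step; faithfully flat descent along $\sO_{X,x}\to\hat{\sO}_{X,x}$ then yields that $\sO_{X,x}\to\sO_{Y,y}$ is flat. For unramifiedness: $\sO_{Y,y}/\mathfrak{m}_x\sO_{Y,y}$ is a finite-dimensional local $\kay$-algebra, hence equal to its own completion $\hat{\sO}_{Y,y}/\mathfrak{m}_x\hat{\sO}_{Y,y}=\hat{\sO}_{Y,y}/\hat{\mathfrak{m}}_y=\kay$, so $\mathfrak{m}_x\sO_{Y,y}=\mathfrak{m}_y$ and $\kappa(y)=\kay$. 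Thus $g$ is flat and unramified, i.e.\ \'etale, at $y$. As this holds for every $y$ over every closed point $x$, the open \'etale locus of $g$ contains all closed points of the variety $Y$, so its closed complement is empty and $g$ is \'etale.

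I expect the only non-formal point to be the last paragraph: extracting genuine flatness of $\sO_{X,x}\to\sO_{Y,y}$ (not merely of the completed map) from the isomorphism on completions, via faithfully flat descent --- this is exactly where "no hypothesis on singularities beyond normality" is being used. One may alternatively package this step as the fpqc-local nature of the property "\'etale" on the base. Everything else is the standard dictionary among fibre cardinalities, decomposition groups, and completed local rings.
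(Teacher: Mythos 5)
Your argument is correct, and it reaches the same pivotal intermediate fact as the paper --- that the natural map $\hat{\sO}_{X,x}\to\hat{\sO}_{Y,y}$ is an isomorphism for every $y$ over $x$ --- but by a genuinely different mechanism. The paper only uses the numerical consequence $\#Y_x(\kay)=\#G$ of the torsor hypothesis: it compares generic ranks, noting that $\hat{\sO}_{X,x}\to\hat{\sO}_{Y,x}=\prod_{y}\hat{\sO}_{Y,y}$ still has generic rank $\#G$ (rank is stable under completion, which needs a small excellence argument to know $\hat{\sO}_{X,x}$ is a domain), so each of the $\#G$ factors has generic rank $1$ and, being a finite birational extension of the \emph{normal complete} local ring $\hat{\sO}_{X,x}$, must equal it; local freeness of $g_*\sO_Y$ then drops out of the equality of generic and residual ranks (Hartshorne II, Lemma 8.9), with unramifiedness obtained along the way. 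You instead exploit the full Galois structure: $X=Y/G$ because $X$ is normal, invariants commute with the flat base change $\sO_{X,x}\to\hat{\sO}_{X,x}$, and the free transitive permutation of the factors $\hat{\sO}_{Y,y}$ identifies the invariant ring with any single factor via projection; you then recover flatness of $\sO_{X,x}\to\sO_{Y,y}$ by faithfully flat descent along completion rather than through a rank criterion. Your route avoids the paper's excellence footnote and any appeal to normality of the completed rings (normality of $X$ enters only through $\sO_X=(g_*\sO_Y)^G$), at the price of invoking the equivariant bookkeeping; the paper's route is leaner on the group action (it would work knowing only the fiber count) but leans harder on the good behavior of normal local rings under completion. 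Two small presentational points: the invariant subring is a \emph{twisted} diagonal (components related by the Galois action), though your conclusion that projection to one factor is an isomorphism is exactly right; and it is worth saying explicitly that the commutation of $(-)^G$ with $\otimes_{\sO_{X,x}}\hat{\sO}_{X,x}$ holds because invariants are the kernel of the $\sO_{X,x}$-linear map $a\mapsto(\sigma a-a)_{\sigma\in G}$, kernels being preserved by flat base change.
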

\begin{proof}[Proof of claim]
We need to prove that $g_* \sO_Y$ is locally free and that $g$ is unramified, which can be checked at every point $x \in X(\kay)$. Set $\sO_{Y,x} \coloneqq (g_* \sO_Y)_x$. Observe that the generic rank of $\sO_{Y,x}/\sO_{X,x}$ is $\big[K(Y)/K(X)\big] = \# G$. By \cite[II, Lemma 8.9]{Hartshorne}, the local freeness claim would follow if we prove this to be its residual rank as well. To do so, let us pullback $\sO_{Y,x}/\sO_{X,x}$ to the completion of $\sO_{X,x}$ to get $\hat{\sO}_{X,x} \to \hat{\sO}_{Y,x} = \prod_{y \in Y_x(\kay)} \hat{\sO}_{Y,y}$, where $\# Y_x(\kay) = \# G$ by hypothesis. Now, the generic rank of $\hat{\sO}_{Y,x}/\hat{\sO}_{X,x}$ is still $\# G$ as ranks are invariant under completion.\footnote{Indeed, let $M$ be a finitely generated module over a local excellent normal domain $(R,\fram)$ of generic rank $r$. Write a short exact sequence $0 \to R^{\oplus r} \to M \to T \to 0$ where $T$ is torsion; such a sequence is obtained by choosing a basis of $M_{(0)}$ over $R_{(0)}$ consisting of elements in the image of $M \to M_{(0)}$. Base changing the sequence by the flat extension $\hat{R}/R$ and noting that $T \otimes_R \hat{R}$ is torsion gives the desired result. Note that $\hat{R}$ is necessarily a domain \cite[\href{https://stacks.math.columbia.edu/tag/0C23}{Tag 0C23}]{stacks-project}.} Therefore, $\hat{\sO}_{Y,y}/\hat{\sO}_{X,x}$ must have generic rank $1$. Since these rings are assumed normal, we have $\hat{\sO}_{Y,y}\simeq \hat{\sO}_{X,x}$; as required. Along the way, we proved that $\hat{\sO}_{Y,y}/\hat{\sO}_{X,x}$ and so $\sO_{Y,y}/\sO_{X,x}$ are unramified.
\end{proof}
We use this freely in what follows. This finishes our remarks.
\end{remark}

In proving that there are abundant non-liftable arcs of $X$ across $g\colon Y \to X$, the following two lemmas are crucial.

\begin{lemma}[Reduction to the log smooth case] \label{pro.FirstKeyproposition}
Work in \autoref{set:MainSetup} and assume $d\leq 3$. In case $d = 3$ and $0< p \leq 5$, assume that $G$ is a $p$-group. If $\tilde{g}$ is \'etale then so is $g$. Equivalently, if $g$ is not \'etale then neither is $\tilde{g}$. 
\end{lemma}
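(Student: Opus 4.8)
The plan is to prove the statement directly: assume $\tilde g$ is \'etale and deduce that $g$ is \'etale. Since $\tilde X$ is smooth and $\tilde g$ is a finite \'etale $G$-cover, $\tilde Y$ is smooth as well, so $\psi$, which is proper and birational by \autoref{set:MainSetup}, is a resolution of the normal variety $Y$ (which is KLT, being the crepant cover of the KLT pair $(X,\Delta_X)$). By \autoref{claim:NatureOfEtaleNessGCovers} it suffices to show that the set-theoretic fibre $g^{-1}(x)$ is a $G$-torsor for every $x\in X(\kay)$; fixing one $y\in g^{-1}(x)$ and its decomposition group $H\coloneqq\Stab_G(y)$, this amounts to showing $H=1$. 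Thus the whole argument localizes at a single point $x$ and the exceptional fibres $E\coloneqq\phi^{-1}(x)\subset\tilde X$ and $\tilde E\coloneqq\psi^{-1}(y)\subset\tilde Y$.

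The first step is the structural claim that \emph{$\tilde E\to E$ is a connected finite \'etale $H$-Galois cover with $\tilde E/H=E$}. Both $E$ and $\tilde E$ are connected, since $\phi_*\sO_{\tilde X}=\sO_X$ and $\psi_*\sO_{\tilde Y}=\sO_Y$ by normality of $X$ and $Y$, so the fibres of $\phi$ and $\psi$ are connected. For the rest I would exploit that an \'etale $G$-cover is a $G$-torsor, so base change is transparent in any characteristic---no averaging, no tameness needed: $\tilde g^{-1}(E)=\tilde Y\times_{\tilde X}E\to E$ is again a $G$-torsor, its connected components are permuted by $G$ and coincide with the fibres $\psi^{-1}(y')$ for $y'\in g^{-1}(x)$ (here $\psi$ is $G$-equivariant and $\psi_*\sO_{\tilde Y}=\sO_Y$ forces these fibres to be connected), and $\tilde E$ is the component whose stabilizer is $H$; computing $\tilde E\times_E\tilde E$ inside $\tilde g^{-1}(E)\times_E\tilde g^{-1}(E)=\coprod_{g\in G}\tilde g^{-1}(E)$ identifies $\tilde E\to E$ as an $H$-torsor, hence---being connected---a connected Galois cover with group $H$ and $\tilde E/H=E$. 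In particular the natural map $\piet(E_{\red})\twoheadrightarrow H$ is surjective.

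It then remains to show that $E_{\red}$ admits no nontrivial connected finite \'etale cover of the relevant type, which forces $H=1$ and concludes the proof; this is where the hypotheses on $d$ and $p$ enter. If $d=2$, then $E_{\red}$ is a connected simple-normal-crossing curve whose dual graph is a tree and whose components are smooth rational curves---true for any log resolution of a log terminal surface singularity, since such a resolution dominates the minimal one and blow-ups preserve these properties---hence $E_{\red}$ is simply connected, with no restriction on $p$. If $d=3$ and $p=0$ or $p\ge 7$, then $E_{\red}$ is the fibre of a log resolution of a KLT threefold singularity, which I would invoke to be simply connected in these characteristics (this is what makes the tame range special; it is closely tied to the known finiteness of the local \'etale fundamental group there). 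Finally, if $d=3$, $p\in\{2,3,5\}$ and $G$---hence $H$---is a $p$-group, then any nontrivial connected finite \'etale $H$-cover of $E_{\red}$ would dominate a connected $\bZ/p$-cover; but $H^1_{\et}(E_{\red},\bZ/p)=0$ by the Artin--Schreier sequence together with $H^0(E_{\red},\sO_{E_{\red}})=\kay$, surjectivity of $F-1$ on $\kay$, and the vanishing $H^1(E_{\red},\sO_{E_{\red}})=0$ coming from the ($W\sO$-)rationality of KLT threefold singularities in these characteristics (in the Witt incarnation one runs the identical computation with the Artin--Schreier--Witt sequence and $H^{\bullet}(E,W\sO_E)$)---so no such cover exists.

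The main obstacle is this last step in dimension three. In the wild case ($p\le 5$, $G$ a $p$-group), the delicate point is to extract the cohomological vanishing $H^1(E_{\red},\sO_{E_{\red}})=0$, or its Witt-vector analogue, for the---possibly reducible and non-reduced---exceptional fibre from the $W\sO$-rationality of $X$; that is, to run a Grauert--Riemenschneider / theorem-on-formal-functions package in the Witt setting, while keeping the scheme-theoretic bookkeeping around $\tilde g^{-1}(E)/G=E$ under the $G$-action honest. In the tame threefold case, the obstacle is instead to pin down a clean statement or reference for the simple-connectedness of resolution fibres of KLT threefold singularities in characteristic $\ge 7$. By contrast, once the structural claim of the second paragraph is in place the surface case is essentially formal.
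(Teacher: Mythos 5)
Your structural reduction is sound and, in the surface case, coincides with the paper's elementary argument: the identification of $\tilde{E}=\psi^{-1}(y)\to E=\phi^{-1}(x)$ as a connected étale $H$-torsor (components of $\tilde{g}^{-1}(E)$ are the fibres $\psi^{-1}(y')$, transitively permuted by $G$), followed by simple connectedness of the tree of rational curves, is exactly how the paper first treats $d=2$, and it is correct. The genuine gap is in both threefold cases, where your plan hinges on inputs that are either unavailable or unproven. For $d=3$ and $p=0$ or $p\ge 7$ you invoke simple connectedness of resolution fibres of KLT threefold singularities; in characteristic zero this is a theorem of Takayama, but in characteristic $p\ge 7$ no such statement is known, and the paper deliberately avoids it: it uses only rationality of KLT threefold singularities in these characteristics (Arvidsson--Bernasconi--Lacini, Hacon--Witaszek) together with a Grothendieck--Riemann--Roch computation showing $\mathrm{ch}(g_*\sO_Y)=\#G$, hence constant fibre dimension $\#G$, hence flatness, and then purity of the branch locus --- an Euler-characteristic/rank count rather than any $\piet$-vanishing for the fibre.

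For $d=3$, $p\in\{2,3,5\}$ and $G$ a $p$-group, the step you flag as ``delicate'' is in fact where the approach breaks: the vanishing $H^1_{\et}(E_{\red},\ZZ/p)=0$ cannot be extracted from $W\sO$-rationality. KLT threefold singularities in these characteristics need not be rational (nor Cohen--Macaulay), so $H^1(E,\sO_E)=0$ is simply not available; and $W\sO_{\QQ}$-rationality only controls $p$-adic cohomology with $\QQ_p$-coefficients, i.e.\ modulo torsion, so it says nothing about $\ZZ/p$-torsors via Artin--Schreier(--Witt), which live at the level of $W_n\sO$ and torsion classes. Moreover, triviality of \emph{every} $\ZZ/p$-cover of the fibre is strictly stronger than what \autoref{pro.FirstKeyproposition} needs, and it is unclear whether it is even true. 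The paper's missing idea is precisely how to stay within what $W\sO$-rationality gives: by \cite[Lemma 3.19]{PatakfalviZdanowiczOrdinaryVarietiesTrivialCanoniucalNOtUniruled} the resolution is $\QQ_p$-rational, proper base change yields $\chi_{\et}(\tilde{X}_x,\QQ_p)=1$ and (since $\psi$ is a resolution once $\tilde{g}$ is étale) $\chi_{\et}(\tilde{Y}_x,\QQ_p)=\#Y_x(\kay)$, while Crew's formula for torsors under $p$-groups gives $\chi_{\et}(\tilde{Y}_x,\QQ_p)=\#G\cdot\chi_{\et}(\tilde{X}_x,\QQ_p)$; comparing the two counts gives $\#Y_x(\kay)=\#G$ for every $x$, and \autoref{claim:NatureOfEtaleNessGCovers} concludes. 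In short, the paper replaces your fundamental-group/simple-connectedness target by an Euler-characteristic count with $\QQ_p$-coefficients, which is exactly the information the hypotheses provide; without such a substitute your threefold cases do not go through.
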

\begin{proof}[Elementary proof of \autoref{pro.FirstKeyproposition} for surfaces]
We present first a proof in the surfaces case and discuss its extension to threefolds in \autoref{sec.ProofOfReductionToLogSmooth} below. We may assume that $X$ is affine with a $\kay$-rational isolated singularity $x\in X$; so $Y$ is affine too. Since $X$ has rational singularities, $\Exc{\phi}$ is a tree of $\bP^1$'s (see \cite[Theorem 4.1]{EsnaultViehwegSurfaceSingularitiesDominatedSmoothVarieties} for a characterization of when this is the case). In particular, $\Exc \phi$ is algebraically simply connected which implies that the restriction of $\tilde{g}$ to $\Exc \phi$ must consist of the disjoint union of $\# G$ copies of $\Exc \phi$.\footnote{Alternatively, one may use that $\Exc \phi$ is a tree of $\bP^1$'s to show that $\tilde{g}$ induces a local isomorphism over the corresponding dual graph, which is contractible and so simply connected.} However, such a disjoint union is the exceptional locus of $\psi$. This means that over $x$ there must lie $\# G$ many points in $Y$. That is, $\# G = \#Y_x(\kay)$ and so $g$ is \'etale.
\end{proof}
The following lemma and corollary should be thought of as a strengthening of results of Kato and Kerz--Schmidt; see \cite[Lemma 3.5]{KatoWildRamificationRestrictionCurves} which is based on \cite[Lemma 2.4]{KerzSchmidtOnDifferentNotionsOfTameness}. Our proofs  of these results were inspired by arguments in the recent paper of Nakamura--Shibata \cite{NakamuraShibataInversionOdAdjunctionForQuotientSingularities}. 
In the proof of this lemma, we use the \emph{untwisting technique}, which we briefly recall next. For details of this technique, we refer the reader to \cite[\S 4 and \S 7]{YasudaWilderMcKayCorrespondence} or to \cite[\S 6]{YasudaMotivicIntegrationDeligneMumfordStack} for the stacky formulation. 

Consider a $G$-cover $g\colon Y\to X$ between normal varieties and let $g_\bD \colon Y_\bD \to X_\bD$ be its base change to $\bD$. By an arc of $X_\bD$, we mean a section $\bD \to X_\bD$. We denote by $J_\infty X_\bD$ the space of arcs of $X_\bD$. We have the obvious identification $J_\infty X=J_\infty X_\bD$. To each $G$-cover $\bE\to \bD$, we can associate a separated and flat $\bD$-scheme $Y^{|\bE|}_\bD$ of finite type called the \emph{untwisting scheme} \cite[Definition 7.2]{YasudaWilderMcKayCorrespondence}. It comes with a natural $\bD$-morphism $g_\bD^{|\bE|}\colon Y^{|\bE|}_\bD \to X_\bD$. The morphisms at the generic fiber
\[
g_{\bD,\eta} \colon Y_{\bD,\eta} \to X_{\bD,\eta}   \text{   and   } 
g_{\bD,\eta}^{|\bE|} \colon Y_{\bD,\eta}^{|\bE|}\to X_{\bD,\eta}
\]
are twisted forms of each other. That is, letting $\bar{\eta}$ denote the geometric generic point of $\bD$, there exists an isomorphism $Y_{\bD,\bar{\eta}} \to Y_{\bD,\bar{\eta}}^{|\bE|}$ over $X_{\bD,\bar{\eta}}$. It follows that these two morphisms have the same branch locus in $X_{\bD,\eta}$. It also follows that if $Y/\kay$ is smooth then $Y_{\bD,\eta}^{|\bE|}$ is smooth over $\bD^*$. The key property of the untwisting scheme is that there is a one-to-one correspondence
\[
 J_\infty^{\bE,G}Y_\bD \xleftrightarrow{1:1} J_\infty Y_\bD ^{|\bE|}\Bigm/\Aut(\bE)
\]
which is compatible with the maps to $J_\infty X_\bD$; see \cite[the last display of p.~140]{YasudaWilderMcKayCorrespondence}. 

Let $(J_\infty^{\bE,G}Y_\bD)^\natural\subset J_\infty^{\bE,G}Y_\bD$ be the locus of those $\bE$-twisted $G$-arcs that map some (and hence every) generic point into the \'etale locus of $g_{\bD}\colon Y_\bD \to X_\bD$ and let $\big(J_\infty Y_\bD^{|\bE|}\big)^\natural\subset J_\infty Y_\bD^{|\bE|}$ and $\big(J_\infty X_\bD\big)^\natural\subset J_\infty X_\bD$ be the loci of arcs of $Y_\bD^{|\bE|}$ and $X_\bD$  mapping $\eta$ into the \'etale loci of $g_{\bD}^{|\bE|} \colon Y_\bD^{|\bE|}\to X_\bD$, respectively. Note that all of these loci have a complement of measure zero. The above correspondence restricts to
\[
\big(J_\infty^{\bE,G}Y_\bD \big)^\natural \xleftrightarrow{1:1} \Big(J_\infty Y_\bD^{|\bE|}\Big)^\natural\Bigm/\Aut(\bE),\] 
since these are preimages of $\big(J_\infty X_\bD\big)^\natural\subset J_\infty X_\bD$. 
Maps 
\[
\big(J_\infty^{\bE,G}Y_\bD \big)^\natural \to \big(J_\infty X_\bD\big)^\natural
\text{ and }
\Big(J_\infty Y_\bD^{|\bE|}\Big)^\natural\Bigm/\Aut(\bE) \to \big(J_\infty X_\bD\big)^\natural
\]
are injective (\cf\ \cite[Proposition 6.16]{YasudaPAdic}).
 
\begin{remark}
By \cite[\S6.3 and Remark 14.5]{YasudaMotivicIntegrationDeligneMumfordStack}, the untwisting scheme $Y_\bD^{|\bE|}$ is the irreducible component (with the reduced structure) of the $G$-fixed locus of the Hom scheme $\underline{\mathrm{Hom}}_\bD(\bE,Y_\bD)$ (i.e., the Hom scheme parametrizing $G$-equivariant morphisms $\bE\to Y_\bD$) that dominates $\bD$.
In the stacky language, this corresponds to the irreducible component of the Hom stack $\underline{\mathrm{Hom}}^{\mathrm{rep}}_\bD(\sE,[Y_\bD /G])$ of representable morphisms. The generic fiber of the latter Hom stack, which is defined over $\bD$,  is \[
\underline{\mathrm{Hom}}^{\mathrm{rep}}_{\bD^*}(\bD^*,[Y_\bD /G]_\eta)
=[Y_\bD /G]_\eta. 
\]
See \cite[Remark 6.23]{YasudaPcyclicMackayCorrespondenceMotivicIntegration}. Thus, in the stacky setting, untwisting leaves the generic fiber unchanged. 
\end{remark}

\begin{lemma} \label{lem.PositiveMeasureLemma}
Let $g\colon Y\to X$ be a $G$-cover between normal varieties. Suppose that $Y$ is smooth and that $J_\infty^{\bE,G}Y_\bD$ is nonempty. Then, the subset
\[
g_{\bD,\infty} \Big(\big(J_\infty^{\bE,G}Y_\bD\big)^\natural\Big)=g_{\bD,\infty}^{|\bE|} \Big(\big(J_\infty Y_\bD^{|\bE|}\big)^\natural\Big)
\subset J_\infty X_\bD = J_\infty X
\]
is a measurable subset of positive measure.
\end{lemma}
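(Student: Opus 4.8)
The plan is to transport the statement to the untwisting scheme and then apply the change of variables formula, so that positivity becomes a question about arcs of a \emph{smooth} variety.

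\emph{Reduction.} First I would set $W\coloneqq Y_\bD^{|\bE|}$, write $h\coloneqq g_\bD^{|\bE|}\colon W\to X_\bD$ for the structural morphism, and put $A\coloneqq\big(J_\infty Y_\bD^{|\bE|}\big)^\natural$. By the untwisting correspondence recalled above the two displayed subsets of $J_\infty X$ agree, and the map $A\to J_\infty X$ factors through $A/\Aut(\bE)$, on which it is injective; hence it suffices to prove that $h_\infty(A)\subseteq J_\infty X$ is measurable of positive measure. For later use I would record, via the Remark, that $W$ is the flat $\bD$-dominating irreducible component of the $G$-fixed locus of $\underline{\Hom}_\bD(\bE,Y_\bD)$, is of finite type over $\bD$, and that $h$ is generically étale (being a twisted form of the generically étale $g_{\bD,\eta}$, with $W$ irreducible); write $U\subseteq W$ for its dense open étale locus.

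\emph{Positivity of $\mu_W(A)$.} The key input is that $J_\infty^{\bE,G}Y_\bD\simeq(J_\infty W)/\Aut(\bE)$ is a space of twisted $G$-arcs of the smooth variety $Y$. As such it is formally smooth --- a $G$-equivariant arc $\bE\to Y$ deforms without obstruction, its deformations being governed by $H^{0}$ of a vector bundle on the curve $\bE$ --- and it is nonempty by hypothesis, so $\mu_W(J_\infty W)>0$. Since the complement of $A$ in $J_\infty W$ has measure zero (as noted before the lemma), $\mu_W(A)=\mu_W(J_\infty W)>0$; in particular $A\neq\emptyset$.

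\emph{Change of variables.} Because $h$ is generically étale, its Jacobian order function $\mathfrak{j}_h$ is finite on $A$ (it vanishes on arcs lying in $U$, and in general records the order along the arc of the different of $g$). As $\Aut(\bE)$ is finite I may replace $A$ by a measurable subset meeting every $\Aut(\bE)$-orbit once, still of positive measure, on which $h_\infty$ is injective; then the change of variables formula for $h$ gives
\[
\mu_X\big(h_\infty(A)\big)=\int_{h_\infty(A)}\LL^{0}\,\mathrm{d}\mu_X=\int_{A}\LL^{-\mathfrak{j}_h}\,\mathrm{d}\mu_W ,
\]
with $h_\infty(A)$ measurable. Stratifying $A$ by the value of $\mathfrak{j}_h$ and using $\mu_W(A)>0$, some stratum $\{\mathfrak{j}_h=n\}$ has positive measure, so the right-hand side dominates $\LL^{-n}$ times a positive measure and is therefore positive. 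This would finish the proof.

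\emph{Main obstacle.} I expect the real difficulty to sit in the two places where smoothness of $Y$ enters: that the twisted $G$-arc space of a smooth variety carries positive measure \emph{even when $\bE\to\bD$ is wildly ramified}, and that the different of $g$ along the arcs in $A$ is finite and adequately controlled. This is exactly what goes beyond the cited lemmas of Kato and of Kerz--Schmidt, which live in the tame world, and it is where the local computations of Nakamura--Shibata are needed: the delicate feature is that in the wild case the untwisting scheme $W$ is typically singular along every section and its special fibre may be non-reduced, so one cannot argue by naive smoothness of $W\to\bD$. A more elementary-looking alternative --- first perturb a given $G$-arc, using that $Y$ is smooth, so its generic point falls into $U$, and then use that an étale morphism lifts nearby arcs to produce an explicit cylinder inside $h_\infty(A)$ --- founders on the same point, namely lifting across the special fibre.
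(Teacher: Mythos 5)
Your overall architecture (untwist, reduce to $Y_\bD^{|\bE|}$, then push the positivity down to $J_\infty X$ by stratifying by the Jacobian order and applying change of variables) matches the second half of the paper's proof, which implements the quotient by $\Aut(\bE)$ via the stack $[\,Y_\bD^{|\bE|}/\Aut(\bE)]$ rather than your ad hoc fundamental domain. The genuine gap is in your step ``Positivity of $\mu_W(A)$.'' There you assert that the twisted $G$-arc space of the smooth $Y$ is ``formally smooth,'' hence nonempty implies $\mu_{Y_\bD^{|\bE|}}\big(J_\infty Y_\bD^{|\bE|}\big)>0$. Unobstructedness of deformations of a single $G$-equivariant arc $\bE\to Y$ is not enough to conclude positivity of the motivic measure: the measure is computed from the dimensions of the truncations $\pi_n(J_\infty Y_\bD^{|\bE|})$, so one needs the images of the jets to grow like $\LL^{nd}$ up to a bounded defect. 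In the wild case this is precisely where the difficulty sits --- taking $G$-invariants is no longer exact, the untwisting scheme is singular along its sections with possibly non-reduced special fibre (as you yourself note in your ``Main obstacle'' paragraph), and no quotable ``formally smooth $+$ nonempty $\Rightarrow$ positive measure'' statement is available. In effect your proof defers the only hard point to an assertion.

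The paper closes exactly this point with a N\'eron smoothening: since the generic fibre $Y^{|\bE|}_{\bD,\eta}$ is smooth over $\bD^*$, there is $h\: W\to Y_\bD^{|\bE|}$ with $W$ smooth over $\bD$, an isomorphism on generic fibres, and $h_\infty$ \emph{bijective} on arc spaces. For smooth $W$ the arc space is a cylinder of measure $\{W\times_\bD\kay\}\neq 0$, the Jacobian order $\mathfrak{j}_h$ is bounded (isomorphism on generic fibres), and Sebag's change of variables transfers positivity to $J_\infty Y_\bD^{|\bE|}$. This is the device that ``lifts across the special fibre,'' the very issue on which you say the naive approach founders, and it is what replaces Nakamura--Shibata's use of rational chain connectedness in the paper's setting. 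To repair your proposal, insert this smoothening argument (or an equivalent quantitative jet-level estimate) in place of the formal-smoothness claim; the remainder of your argument then goes through, though you should either verify measurability of your chosen $\Aut(\bE)$-fundamental domain or argue on the quotient stack as the paper does.
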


\begin{proof}
Since the generic fiber $Y_{\bD,\eta}^{|\bE|}$ is smooth, there exists a N\'eron smoothening $h\colon W\to Y_\bD^{|\bE|}$; see \cite[\S3.1]{BoschLutkebohmertRaynaudNeronModels}. We have the following properties: $W$ is smooth over $\bD$, the induced morphism $W_\eta \to Y_{\bD,\eta}^{|\bE|}$ of generic fibers is an isomorphism, and $h_\infty \colon J_\infty W \to J_{\infty} Y_\bD^{|\bE|}$ is bijective.
Since $J_{\infty} Y_\bD^{|\bE|}$ is non-empty, then so is $J_{\infty} W$. Since $W$ is smooth, the total space $J_{\infty} W$ is a cylinder of  measure $\{W\times _\bD \kay \}\ne 0$. 
Let $\mathfrak{j}_h$ be the Jacobian order function of $h$. This function is bounded above as $h$ is an isomorphism on generic fibers. Thus, $J_\infty W =\bigsqcup_{i=0}^n \mathfrak{j}_h^{-1}(i)$ for all $n\gg 0$. For some $i$, $\mu_W\big(\mathfrak{j}_h^{-1}(i)\big)>0$. By the change of variables formula \cite[Theorem 8.0.5]{SebagIntegrationMotiviqueSurLesSchemasFormels}, it follows that
\[
\mu_{Y_\bD^{|\bE|}}\Big(h_\infty\big(\mathfrak{j}_h^{-1}(i)\big)\Big)=\bL^{-i}\mu_W\big(\mathfrak{j}_h^{-1}(i)\big)>0
\]
for any such $i$. This shows that  
\[
\mu_{Y_\bD^{|\bE|}}\Big(\big(J_\infty Y_\bD^{|\bE|}\big)^\natural\Big)=\mu_{Y_\bD^{|\bE|}}\big(J_\infty Y_\bD^{|\bE|}\big)>0. 
\]
To deduce the desired positivity from this, we apply a similar argument as above to $g_\bD^{|\bE|}\colon Y_\bD^{|\bE|}\to X_\bD$. However, we need a result on motivic integration in the equivariant situation. In characteristic zero, such a theory was developed in \cite{DenefLoeserMotivicIntegrationQuotientSingularitiesMcKayCorrespondnence}. We will use the stacky formulation developed in \cite{YasudaMotivicIntegrationDeligneMumfordStack} as an available theory having the necessary generality. 

Set $\sY \coloneqq [Y_\bD^{|\bE|}/\Aut(\bE)]$. 
The morphism $g_\bE^{|\bD|} \colon Y_{\bD}^{|\bE|} \to X_{\bD}$ factors through $\sY$ and hence the map $g_{\bD,\infty}^{|\bE|}$ factors as
\[
 J_\infty Y_\bD^{|\bE|} \to J_\infty Y_\bD^{|\bE|}\Bigm/\Aut(\bE)= J_\infty \sY \to J_\infty X_\bD . 
\]
The right map is injective outside a subset of measure zero.
Further, we have
\[
\mu_\sY(J_\infty \sY)=\mu_{Y_\bD^{|\bE|}}\big(J_\infty Y_\bD^{|\bE|}\big)\Bigm/\Aut(\bE)>0. 
\]
Let $\mathfrak{j}$ be the Jacobian order function of $\sY\to X_\bD$. We have the partition 
\[
J_\infty \sY =\bigsqcup_{i\in \bN \cup\{\infty\}} \mathfrak{j}^{-1}(i)
\]
by countably many measurable subsets. Since $\mathfrak{j}^{-1}(\infty)$ has measure zero, there exists $i\ne \infty$ such that $\mathfrak{j}^{-1}(i)$ has positive measure. For this $i$, from the change of variables (\cite[Theorem 10.26 or 11.13]{YasudaMotivicIntegrationDeligneMumfordStack}), we have
\[
 \mu_{X_\bD}\Big(g_{\bD, \infty}^{|\bE|}\big(\mathfrak{j}^{-1}(i)\big)\Big)=\bL^{-i}\mu_\sY\big(\mathfrak{j}^{-1}(i)\big)>0.
\]
The subset of the statement has the same measure as $g_{\bD,\infty} (J_\infty \sY)$, which contains $g_{\bD,\infty}^{|\bE|}\big(\mathfrak{j}^{-1}(i)\big)$. This proves the lemma. 
\end{proof}

\begin{corollary} \label{Cor.AbundanceNonLiftableArcs}
Let $G$ be a finite group and $H\subset G$ be a subgroup. Let $g\colon Y\to X$ be a $G$-cover such that $Y$ is smooth and there is $y \in Y(\kay)$ being fixed by $H$. 
Let  $\bE\to \bD$ be another $G$-cover such that $H$ fixes a connected component $\bE_0$ of $\bE$ (i.e. $\bE_0 \to \bE$ is $H$-equivariant). Let $\sN \subset J_{\infty} X$ be the subset of arcs $\gamma \colon \bD\to X$ such that: $\gamma(\delta) = g(y)$,  $g$ is \'etale over $\gamma(\eta)$, and the pullback of $\gamma$ along $g$ induces the given $G$-cover $\bE\to \bD$. Then, $\sN$ has positive measure.
\end{corollary}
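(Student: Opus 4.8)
The plan is to rerun the proof of \autoref{lem.PositiveMeasureLemma} while carrying along the \emph{center} of each arc, so as to land inside the locus of arcs of $X$ through $g(y)$. First I would produce a single $\bE$-twisted $G$-arc of $Y$ sitting over $g(y)$. Since $y$ is fixed by $H$, the stabilizer of the component $\bE_0$ in $G$, the constant morphism $\bE_0 \to Y$ with value $y$ is $H$-equivariant; combined with the structure morphism $\bE_0 \to \bD$ it gives an $H$-equivariant $\bD$-morphism $\bE_0 \to Y_\bD = Y\times_\kay\bD$, which in turn induces a $G$-equivariant $\bD$-morphism $\rho_0\colon\bE\to Y_\bD$. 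Thus $J_\infty^{\bE,G}Y_\bD$ is nonempty, $\rho_0$ sends the closed point of the component $\bE_0$ to $y$, and $g_{\bD,\infty}(\rho_0)$ is the constant arc $\bD\to X$ with value $g(y)$.

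Next I would transport $\rho_0$ through the untwisting scheme. Under the correspondence $J_\infty^{\bE,G}Y_\bD \leftrightarrow J_\infty Y_\bD^{|\bE|}/\Aut(\bE)$ it becomes the $\Aut(\bE)$-orbit of an arc $\bar\rho_0$ of $Y_\bD^{|\bE|}$; put $w_0 \coloneqq \bar\rho_0(\delta)$, so that $g_\bD^{|\bE|}(w_0) = (g(y),\delta)$ by compatibility of the correspondence with the maps to $J_\infty X$. Choosing a N\'eron smoothening $h\colon W \to Y_\bD^{|\bE|}$ (available because $Y$ smooth forces $Y_{\bD,\eta}^{|\bE|}$ smooth over $\bD^*$) I get $W$ smooth over $\bD$ with $h_\infty$ bijective; set $w_0' \coloneqq (h_\infty^{-1}\bar\rho_0)(\delta)\in W(\kay)$, so $h(w_0') = w_0$. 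The crucial point — and the one new ingredient beyond \autoref{lem.PositiveMeasureLemma} — is that the set $\mathcal C$ of arcs of $W$ with center $w_0'$ is a \emph{nonempty} cylinder, since it contains $h_\infty^{-1}\bar\rho_0$, hence has positive measure because $W$ is smooth over $\bD$.

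From there I would replay the argument of \autoref{lem.PositiveMeasureLemma} fiberwise. The Jacobian order function $\mathfrak{j}_h$ being bounded on $J_\infty W$, some $\mathcal C \cap \mathfrak{j}_h^{-1}(i)$ has positive measure, and by the change of variables for $h$ its image in $J_\infty Y_\bD^{|\bE|}$ has positive measure and consists of arcs centered at $w_0$. Saturating by $\Aut(\bE)$ and descending to $\sY \coloneqq [Y_\bD^{|\bE|}/\Aut(\bE)]$ preserves positivity, and since $g_\bD^{|\bE|}$ is $\Aut(\bE)$-invariant these arcs still map to $J_\infty X$ with center $g(y)$. Stratifying $J_\infty \sY$ by the Jacobian order function of $\sY \to X_\bD$ and pushing forward by the stacky change of variables then yields a positive-measure subset of $J_\infty X$ all of whose arcs $\gamma$ satisfy $\gamma(\delta) = g(y)$. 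Intersecting with the full-measure locus where the generic point lands in the \'etale locus of $g$ — which on the $Y_\bD^{|\bE|}$-side is $(J_\infty Y_\bD^{|\bE|})^\natural$ — and noting that arcs coming from $(J_\infty^{\bE,G}Y_\bD)^\natural$ pull back along $g$ to the prescribed cover $\bE\to\bD$, one sees this subset lies in $\sN$. Hence $\mu_X(\sN) > 0$.

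I expect the genuine work to be bookkeeping rather than any single hard step: one must check that ``center $w_0'$, then $w_0$, then $g(y)$'' survives the N\'eron smoothening and both change-of-variables pushforwards, and that restricting to the various $(-)^\natural$-loci before pushing forward does not cost positivity. The only essentially new input is the elementary observation that a nonempty cylinder in a $\bD$-smooth scheme has positive measure, which is exactly why the first step must exhibit the explicit centered twisted arc $\rho_0$.
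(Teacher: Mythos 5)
Your proposal is correct, and its core is the same as the paper's: you build the same ``trivial'' $\bE$-twisted $G$-arc from the $H$-fixed point $y$ and the $H$-stable component $\bE_0$, and you push it through the untwisting scheme, a N\'eron smoothening, and two applications of the change of variables formula. Where you differ is in how positivity of $\mu_X(\sN)$ is extracted: the paper simply remarks that $\sN$ is $g_{\bD,\infty}\big((J_\infty^{\bE,G}Y_\bD)^\natural\big)$ and then quotes \autoref{lem.PositiveMeasureLemma}, whereas you re-run the proof of that lemma carrying the center along. This extra bookkeeping is not wasted: as stated, \autoref{lem.PositiveMeasureLemma} only yields positivity of the full image $g_{\bD,\infty}\big((J_\infty^{\bE,G}Y_\bD)^\natural\big)$, whose arcs may be centered anywhere, while $\sN$ additionally imposes $\gamma(\delta)=g(y)$; your centered variant (the arcs of $W$ through your point $w_0'$ form a nonempty cylinder of positive measure, the center survives both pushforwards, and the $\Aut(\bE)$-invariance of $g_{\bD}^{|\bE|}$ keeps the induced arcs of $X$ centered at $g(y)$) is precisely the content that the paper's one-line identification leaves implicit, so your route is a slightly finer execution of the same argument rather than a different one.

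One shared caveat: extending the constant $H$-equivariant morphism $\bE_0\to Y_\bD$ to a $G$-equivariant morphism $\bE\to Y_\bD$ requires the full stabilizer $\Stab(\bE_0)$ to fix $y$, not merely the subgroup $H$ that the statement assumes preserves $\bE_0$; you implicitly grant this by calling $H$ ``the stabilizer of $\bE_0$.'' This is harmless in practice, since in the application (\autoref{pro.SecondProposition}) $\bE$ is induced from a connected $H$-cover $\bE_0$, so $\Stab(\bE_0)=H$ and the extension exists exactly as both you and the paper claim.
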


\begin{proof}
First of all, note that $\sN$ is none other than $g_{\bD,\infty}\big((J_\infty^{\bE,G}Y_\bD)^\natural\big)$. Thus, in order to apply \autoref{lem.PositiveMeasureLemma}, it suffices to show that $J_\infty^{\bE,G}Y\ne \emptyset$; which we do next.  The $\bD$-scheme $Y_\bD$ has the $H$-invariant section
\[
\bD \xrightarrow{\simeq} \{y\}\times \bD \subset Y_\bD.
\]
The composition $\bE_0\to \bD \to Y_\bD$ is clearly an $H$-equivariant $\bD$-morphism. There exists a unique extension of this morphism to a $G$-equivariant morphism $\bE\to Y_\bD$, i.e., an $\bE$-twisted $G$-arc (a corresponding arc $\bD\to Y_\bD^{|\bE|}$, which is unique up to the $\Aut(\bE)$-action, corresponds to the ``trivial arc'' by Nakamura--Shibata; see \autoref{rem.NakamuraShibata'sWork} below.)
\end{proof}

\begin{remark}\label{rem.NakamuraShibata'sWork}
Nakamura--Shibata work over $\bA^1$, following the setting in \cite{DenefLoeserMotivicIntegrationQuotientSingularitiesMcKayCorrespondnence}. In \cite[Claim 5.2]{NakamuraShibataInversionOdAdjunctionForQuotientSingularities}, they use a result of Hacon--McKernan \cite{HaconMcKernanOnShokurovsRCC} about rational chain connectedness, which is based on a result by  Graber--Harris--Starr \cite{GraberHArrisStarrFamiliesofRCCVarieties}, to show that the ``trivial arc'' deforms. We work instead over the formal disk $\bD$ and use the N\'eron smoothening to deform the ``trivial arc.''
\end{remark}

\begin{proposition}[Abundance of non-liftable arcs] \label{pro.SecondProposition} 
Work in \autoref{set:MainSetup} with $d \leq 3$. If $d = 3$ and $0<p \leq 5$, further assume that $G$ is a $p$-group. If $g$ is not \'etale then there is an arc $\gamma \colon \bD \to X$ that does not lift to $Y$ and so $g_{\infty} \colon J_{\infty} Y \to J_{\infty} X$ is not surjective. Moreover, $J_{\infty} X \setminus g_{\infty} ( J_{\infty} Y )$ has positive measure. Similarly, if $G$ fixes $y \in Y(\kay)$ and $x=g(y)$, then $(J_{\infty} X)_x \setminus g_{\infty} ( (J_{\infty} Y)_y )$ has positive measure.
\end{proposition}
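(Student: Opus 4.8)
The plan is to transport the conclusion of \autoref{Cor.AbundanceNonLiftableArcs}, which lives over the log smooth cover $\tilde g\colon\tilde Y\to\tilde X$, down to $g\colon Y\to X$ along the log resolution $\phi$. First I would observe that $g$ not \'etale forces $\#G>1$, and that \autoref{pro.FirstKeyproposition} (applicable under the standing hypotheses on $d$, $p$ and $G$) makes $\tilde g$ not \'etale either. Since $\tilde X$ is smooth, Zariski--Nagata purity of the branch locus forbids $\tilde g$ from being \'etale in codimension one, so $\tilde g$ ramifies along a prime divisor $R_0\subset\tilde Y$; as $\tilde Y$ is normal it is smooth along a dense open of $R_0$, so I may pick $\tilde y\in R_0(\kay)\cap\tilde Y^{\mathrm{sm}}$. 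The inertia group of $R_0$ fixes $R_0$ pointwise, hence lies in $\Stab_G(\tilde y)$ and is nontrivial; I then choose a prime-order subgroup $H\subseteq\Stab_G(\tilde y)$, which is realizable as the group of a connected cover $\bE_0\to\bD$ (a Kummer extension if $\ell\ne p$, an Artin--Schreier extension if $\ell=p$), and set $\bE:=\coprod_{G/H}g\bE_0\to\bD$, a $G$-cover in which $H$ is exactly the stabilizer of the component $\bE_0$.

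Next I would shrink to the $\tilde g$-invariant open $\tilde X^{\circ}:=\tilde X\setminus\tilde g(\tilde Y\setminus\tilde Y^{\mathrm{sm}})$: because $\tilde Y^{\mathrm{sm}}$ is $G$-stable and contains the whole fibre $\tilde g^{-1}(\tilde g(\tilde y))$, the point $\tilde g(\tilde y)$ lies in $\tilde X^{\circ}$ and $\tilde Y^{\circ}:=\tilde g^{-1}(\tilde X^{\circ})$ is smooth. Applying \autoref{Cor.AbundanceNonLiftableArcs} to $\tilde g^{\circ}\colon\tilde Y^{\circ}\to\tilde X^{\circ}$, to $H$, to the $H$-fixed point $\tilde y$, and to $\bE\to\bD$, I obtain a subset $\sN\subset J_\infty\tilde X^{\circ}\subseteq J_\infty\tilde X$ of positive measure consisting of arcs $\gamma'$ with $\gamma'(\delta)=\tilde g(\tilde y)$, with $\tilde g$ \'etale over $\gamma'(\eta)$, and whose pullback along $\tilde g$ induces $\bE\to\bD$; regarded inside $J_\infty\tilde X$ its measure is unchanged, $\tilde X^{\circ}$ being open.

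Then I would descend. Put $\sN':=\sN\setminus J_\infty\big(\phi^{-1}(\phi(\Exc\phi))\big)$, still of positive measure because $\phi^{-1}(\phi(\Exc\phi))$ is a proper closed subset of $\tilde X$. For $\gamma'\in\sN'$ and $\gamma:=\phi\circ\gamma'$ we have $\gamma(\eta)=\phi(\gamma'(\eta))\notin\phi(\Exc\phi)\supseteq\Sing X$, so $\phi$ (hence also $\tilde Y\to\tilde X$ over a neighbourhood) is an isomorphism near $\gamma(\eta)$ and, $g$ being quasi-\'etale, $g$ is \'etale over $\gamma(\eta)$; therefore the pullback of $g$ along $\gamma$ restricts over $\bD^{\ast}=\Spec\kay\llparenthesis t\rrparenthesis$ to the same finite \'etale $G$-algebra as the pullback of $\tilde g$ along $\gamma'$, namely $\coprod_{G/H}\Spec L$ with $[L:\kay\llparenthesis t\rrparenthesis]=\#H>1$. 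A lift of $\gamma$ to $Y$ would then give a $\kay\llparenthesis t\rrparenthesis$-point of $Y\times_X\bD^{\ast}$, forcing one factor $L=\kay\llparenthesis t\rrparenthesis$, which is absurd; so $\gamma$ does not lift to $Y$, i.e.\ $\phi_\infty(\sN')\subset J_\infty X\setminus g_\infty(J_\infty Y)$ and in particular $g_\infty$ is not surjective. Finally $\phi_\infty$ is injective on $\sN'$ (its arcs have generic point in the locus where $\phi$ is an isomorphism) and $\mathfrak{j}_\phi<\infty$ there, so the change of variables formula yields $\mu_X\big(\phi_\infty(\sN')\big)=\int_{\sN'}\LL^{-\mathfrak{j}_\phi}\,\mathrm{d}\mu_{\tilde X}>0$, proving that $J_\infty X\setminus g_\infty(J_\infty Y)$ has positive measure. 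For the local statement I would run the same argument but choose $\tilde y$ on a ramification divisor $R_0$ of $\tilde g$ with $\psi(R_0)\ni y$: this is possible because $G$ fixing $y$ makes $g^{-1}(x)=\{y\}$, so $g$ is totally ramified over $x$ and the ramification of $\tilde g$ must meet $\psi^{-1}(y)$ (in the surface case one even has $\Ram(\tilde g)\subset\psi^{-1}(y)$ once $X$ is shrunk so $X\setminus\{x\}$ is smooth); then $\phi_\infty(\sN')$ consists of arcs through $x$, and any lift of such an arc to $Y$ has special point in $g^{-1}(x)=\{y\}$, hence lies in $(J_\infty Y)_y$, so the argument goes through verbatim.

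The hard part will be the first and last paragraphs: extracting from ``$\tilde g$ not \'etale'' an honest \emph{smooth} point of $\tilde Y$ carrying a nontrivial stabilizer, and, for the local version, one lying over $y$. This is exactly where smoothness of the resolution $\tilde X$ is indispensable, since otherwise the non-\'etale behaviour of $\tilde g$ could be entirely concentrated in $\Sing\tilde Y$; purity of the branch locus over the regular scheme $\tilde X$ is what guarantees a genuine ramification divisor, along whose smooth part the required point is found.
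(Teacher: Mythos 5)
Your treatment of the global statement is correct and is essentially the paper's argument: reduce to $\tilde{g}$ via \autoref{pro.FirstKeyproposition}, find a prime-order stabilizer $H$, realize $H$ by a Kummer or Artin--Schreier cover of $\bD$, induce up to a $G$-cover $\bE$, apply \autoref{Cor.AbundanceNonLiftableArcs}, and push the resulting positive-measure family down along $\phi_\infty$. Your two refinements --- using purity to place the fixed point in $\tilde{Y}^{\mathrm{sm}}$ and shrinking to $\tilde{X}^{\circ}$ so that the smoothness hypothesis of the corollary literally holds, and carrying out the descent by hand (non-liftability read off over $\bD^{*}$, plus change of variables) --- are genuine improvements in precision: the paper applies the corollary directly to the merely normal $\tilde{Y}$ and disposes of the descent with one sentence on almost-bijectivity.

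The local statement is where your argument has a real gap, in two places. First, the inference ``$g$ is totally ramified over $x$, hence $\Ram(\tilde{g})$ must meet $\psi^{-1}(y)$'' is not a formal implication: the paper's elliptic-cone example ($C \to C/(\bZ/p)$ totally ramified at the vertex while $\tilde{C} \to \tilde{C}/(\bZ/p)$ is \'etale) shows it fails without the KLT/rationality input. What is true is that if $\Ram(\tilde{g}) \cap \psi^{-1}(y) = \emptyset$ then $\tilde{g}$ is \'etale over a neighbourhood of $\phi^{-1}(x)$; one must then shrink $X$ to an open $X' \ni x$ with $\phi^{-1}(X')$ inside that neighbourhood and rerun \autoref{pro.FirstKeyproposition} on the restricted data to contradict total ramification. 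So the step can be repaired, but only by re-invoking the reduction lemma locally, which your one-clause justification hides. Second, and more seriously for your smoothness-careful route: to pin the arcs at $x$ you need the $H$-fixed point $\tilde{y}$ to lie in $\psi^{-1}(y)$ \emph{and} in $\tilde{Y}^{\mathrm{sm}}$, since otherwise your shrinking to $\tilde{X}^{\circ}$ discards $\tilde{g}(\tilde{y})$. Purity gives smooth points densely on the ramification divisor $R_0$, but $R_0 \cap \psi^{-1}(y)$ is a smaller closed set (codimension $\ge 2$ in $\tilde{Y}$) that could a priori be contained in $\Sing \tilde{Y}$; nothing in your argument excludes this when $d=3$. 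Your parenthetical does settle the surface case (after shrinking so the branch locus of $g$ is $\{x\}$, every ramification divisor of $\tilde{g}$ lies in $\psi^{-1}(y)$), but for threefolds the branch locus of $g$ can be a curve through $x$ and the difficulty remains. The paper sidesteps it by applying \autoref{Cor.AbundanceNonLiftableArcs} to a (possibly singular) point of $\tilde{Y}$ over $y$, i.e.\ without insisting on smoothness of the source; to complete your version you must either justify the corollary for a normal source or actually produce a smooth fixed point over $y$, and neither is done.
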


\begin{proof}
Since $g$ is not \'etale, neither is $\tilde{g} \colon \tilde{Y} \to \tilde{X}$ by \autoref{pro.FirstKeyproposition}. The map $\phi_\infty \colon J_\infty \tilde{X} \to J_\infty X$ is almost bijective and the subsets $g_\infty (J_\infty Y) $ and $\tilde{g}_\infty (J_\infty \tilde{Y}) $ correspond to each other  through this almost bijection (outside subsets of measure zero). Moreover, the map sends a subset of positive measure of $J_\infty \tilde{X}$ to such of $J_\infty X$. Therefore, it suffices to show the proposition for $\tilde{g}$ in place of $g$. Now, there exists a point $y\in \tilde{Y}(\kay)$ fixed by a nontrivial \emph{cyclic} subgroup $1\ne H\subset G$ of prime order. In particular, there exists a connected $H$-cover $\bE_0\to \bD$. Indeed, if the order of $H$ is a prime $\ell \neq p$, we can then take the cover $\Spec \kay \llbracket t^{1/\ell}\rrbracket \to \Spec \kay \llbracket t\rrbracket$. If the order is $p$, we can take, for example, the Artin--Schreier extension $\kay\llparenthesis t \rrparenthesis[u]/ (u^p -u -t^{-1})$ of $\kay\llparenthesis t \rrparenthesis$ and the corresponding cover of $\bD$. We can extend $\bE_0\to \bD$ to a $G$-cover $\bE\to \bD$. We now just apply \autoref{Cor.AbundanceNonLiftableArcs} to the above $y$ and $\bE$ to obtain the result. 
\end{proof}

\begin{remark} \label{rem.StrongerStatement}
In proving \autoref{pro.SecondProposition}, we did not use \autoref{Cor.AbundanceNonLiftableArcs} in its full power. The authors believe a sharper analysis may give a stronger version of \autoref{pro.SecondProposition} which may help in showing the DCC property by providing a stronger descent. See \autoref{que.DCCForThreefolds} below.
\end{remark}

\subsection{On the proof of \autoref{pro.FirstKeyproposition}} \label{sec.ProofOfReductionToLogSmooth}

We come back now to the proof of \autoref{pro.FirstKeyproposition}. We concentrate now on the case of threefolds. The substantial difference is that in the $2$-dimensional case we can rely on rationality whereas in the $3$-dimensional case only on $W\sO$-rationality, which is much less elementary than rationality. Recall that KLT threefold singularities are $W\sO$-rational; see \cite{HaconWitaszekMMPLowCharacteristic,GongyoNakamuraTanakaRationalPointsFanoThreefoldsFiniteFields}. However, these are rational in characteristics $p\geq 7$; see \cite{ArvidssonBernasconiLAciniKVVDelPezzop>5,HaconWitaszekRationalityKawamataLogTerminalPosChar}. It is then far from obvious how what we wrote above in the proof of \autoref{pro.FirstKeyproposition} can be generalized to threefolds and beyond. 

The key idea, however, was to exploit the topological simplicity of $\Exc \phi$ granted by the rationality of $X$. Due to the low dimensions involved, the theorem of formal functions can be utilized to prove that $H^1(\tilde{X}_x, \sO_{\tilde{X}_x}) = 0$ from the vanishing $\sO_X \xrightarrow{\simeq}\mathbf{R} \phi_* \sO_{\tilde{X}}$ (where ``$\simeq$'' means a quasi-isomorphism in the appropriate derived category); see \cite{ArtinOnIsolatedRationalSingularities}. Moreover, one may ``thicken'' $\tilde{X}_x$ if necessary to ensure that $H^0(\tilde{X}_x, \sO_{\tilde{X}_x}) = \kay$. More precisely, there is a minimal effective divisor $D=\sum_i n_i E_i$ on $\tilde{X}$ such that the $\{E_i\}$ are the irreducible components of $\tilde{X}_x$, $n_i \geq 1$ for all $i$, and $\sO_{\tilde{X}}(-D)$ is relatively nef over an open neighborhood of $x$ (i.e. $D\cdot E_i \leq 0$ for all $i$). It then follows that $\chi(\tilde{X}_x, \bar{\sO}_{\tilde{X}_x})=1$ where
\[
\bar{\sO}_{\tilde{X}_x} \coloneqq \sO_{\tilde{X}}/\sO_{\tilde{X}}(-D).
\]
See \cite{ArtinOnIsolatedRationalSingularities} for details; where Artin referred to $D$ as the \emph{fundamental cycle}.\footnote{We may always replace $D$ for any larger divisor and obtain the same Euler characteristic; the fundamental cycle is just minimal with that property.} This can be further exploited to prove that $\Exc \phi = (\tilde{X}_x)_{\mathrm{red}}$ is a tree of $\bP^1$'s; see \cite{LipmanRationalSingularities,EsnaultViehwegSurfaceSingularitiesDominatedSmoothVarieties}, which is what we had used above. 

We start by reproving \autoref{pro.FirstKeyproposition} only knowing that $\chi(\tilde{X}_x, \bar{\sO}_{\tilde{X}_x}) = 1$. 

\begin{proof}[Proof of \autoref{pro.FirstKeyproposition}: Surface case]
Since $\tilde{g}$ is \'etale, $\psi$ is a resolution of singularities. In particular, $\sO_Y \xrightarrow{\simeq} \mathbf{R} \psi_* \sO_{\tilde{Y}}$ as $Y$ also has rational singularities. Now, let us consider the fiber of \autoref{eqn.CommSquareSetup} at a point $x \in X(\kay)$:
\begin{equation} \label{eqn.FiberDiagram}
\xymatrix{\tilde{X}_x \ar[d]_-{\phi_x} & \tilde{Y}_x \ar[d]^-{\psi_x} \ar[l]_-{\tilde{g}_x} \\x & Y_x 
\ar[l]_-{g_x}}
\end{equation}
Let $D$ be the fundamental cycle of $x$ and define $\bar{\sO}_{\tilde{X}_x}$ as before. By the Hirzebruch--Riemann--Roch theorem: 
\[
  \chi(\tilde{Y}_x,\tilde{g}_x^* \bar{\sO}_{\tilde{X}_x}) = \# G \cdot \chi(\tilde{X}_x, \bar{\sO}_{\tilde{X}_x}) = \# G \] as $\tilde{g}_x$ is finite \'etale of degree $\# G$; see \cite[Example 18.3.9]{FultonIntersection}. However, since $g$ is \'etale, it also follows that
\[
\tilde{g}_x^* \bar{\sO}_{\tilde{X}_x} = \sO_{\tilde{Y}}/\sO_{\tilde{Y}}(-g^* D),
\]
where $-g^* D$ is relatively nef over a neighborhood of $x \in X$. In particular, $-g^* D$ is relatively nef over an open neighborhood of each $y \in Y(\kay)$ lying over $x$. Therefore, arguing à la Artin, we conclude that 
\[
 \chi(\tilde{Y}_x,\tilde{g}_x^* \bar{\sO}_{\tilde{X}_x}) = \dim_{\kay} H^0(\tilde{Y}_x,\tilde{g}_x^* \bar{\sO}_{\tilde{X}_x}) = \# g^{-1}(x) \leq \dim_{\kay} \sO_{Y_x}.
\]
Since $H^0(\tilde{Y}_x,\tilde{g}_x^* \bar{\sO}_{\tilde{X}_x})$ is an $\sO_{Y_x}$-algebra, this let us conclude that it must be trivial and so $\# G = \dim_{\kay} \sO_{Y_x}$. Thus, $g_* \sO_Y$ is locally free; say by \cite[II, Exercise 5.8.(c)]{Hartshorne}. Since $g$ is further quasi-\'etale, we conclude that $g$ is \'etale by the purity of the branch locus for faithfully flat finite covers \cite[VI, Theorem 6.8]{AltmanKleimanIntroToGrothendieckDuality}.
\end{proof}

Next, we remark that the above proof can be carried out without passing to the fibers by means of the Grothendieck--Riemann--Roch theorem (GRR). This has the salient feature of letting us extend our proof to higher dimensions yet still assuming rationality.

\begin{proof}[Proof of \autoref{pro.FirstKeyproposition}: General rational case]
Here, we may assume that $d$ is arbitrary but that $X$ and $Y$ have rational singularities (e.g. KLT surface singularities or KLT threefold singularities in characteristic $p \neq 2,3,5$ \cite{ArvidssonBernasconiLAciniKVVDelPezzop>5,HaconWitaszekRationalityKawamataLogTerminalPosChar}). As before, we consider the proper morphism $\gamma \colon \tilde{Y} \to X$ where $\phi \circ \tilde{g}
=\gamma = g \circ  \psi$. We use the rationality of $X$ to say that $\phi_{!} \sO_{\tilde X} = \sO_X$. Similarly, since $\tilde{g}$ is \'etale and so $\psi$ is a resolution, we may say $\psi_! \sO_{\tilde{Y}} = \sO_Y$. Now, being $g$ and $\tilde{g}$ affine morphisms, we have $g_{!} \sO_Y = g_* \sO_Y$ and likewise for $\tilde{g}$. 

The point now is that we may compute $\chern(\gamma_! \sO_{\tilde{Y}})$ in two different ways. First:
\[
\chern(\gamma_! \sO_{\tilde{Y}}) = \chern(\phi_* \tilde{g}_* \sO_{\tilde{Y}}) = \phi_*\big( \chern(\tilde{g}_* \sO_{\tilde{Y}}) \cdot \td(\sT_{\phi})\big) = \phi_*\big( \tilde{g}_*(\td(\sT_{\tilde{g}})) \cdot \td(\sT_{\phi})\big),
\]
where we used GRR in the last two equalities. Next, we use that $\tilde{g}$ is a finite \'etale map of degree $\# G$ to say $\tilde{g}_*(\td(\sT_{\tilde{g}})) = \# G$ as $\sT_{\tilde{g}} = 0$. This let us conclude that:
\[
\chern(\gamma_! \sO_{\tilde{Y}}) = \# G \cdot \phi_* \big(\td(\sT_{\phi})\big) = \#G \cdot \chern(\phi_! \sO_{\tilde{X}}) =\#G \cdot \chern(\sO_X) = \#G,
\]
where the second equality uses GRR. On the other hand, we may compute $\chern(\gamma_! \sO_{\tilde{Y}})$ as follows:
\[
\chern(\gamma_! \sO_{\tilde{Y}}) = \chern(g_* \psi_* \sO_{\tilde{Y}}) = \chern(g_* \sO_Y).
\]
In this manner, we conclude:
\[
\chern(g_* \sO_Y) = \# G.
\]
Next, we recall that Chern characters commute with pullbacks. Then,
\[
\# G = \chern\big(g_* \sO_Y \otimes_{\sO_X} \kappa(x)\big) = \dim_{\kappa(x)} \big(g_* \sO_Y \otimes_{\sO_X} \kappa(x) \big)
\]
for all $x \in X$. Using \cite[II, Exercise 5.8.(c)]{Hartshorne} gives that $g_* \sO_Y$ is locally free, i.e., $g$ is faithfully flat. We conclude as before that $g$ is \'etale by purity \cite[VI, Theorem 6.8]{AltmanKleimanIntroToGrothendieckDuality}.
\end{proof}

As mentioned in \autoref{sec.Introduction}, in studying \autoref{que.questionIntro.} in dimensions $\leq 3$, the most important case is the purely wild one (i.e., when all the Galois groups are $p$-groups) as it is the one left unsolved by the minimal model program. Under such hypothesis, it turns out that one can give simpler proofs for \autoref{pro.FirstKeyproposition}. Let us start with the rational case.

\begin{proof}[Proof of \autoref{pro.FirstKeyproposition}: Rational and purely wild case]
Say $\#G = p^e$ and both $X$ and $Y$ have rational singularities of any dimension. There is short exact sequence $0 \to \bZ/p \to G \to H \to 1$ where $\# H = p^{e-1}$. Thus, by induction on $e$, we may assume $G=\bZ/p$. Consider the following diagram induced from the Artin--Schreier theory:
\[
\xymatrix{
H^0(\tilde X, \mathcal{O}_{\tilde X}) \ar[r] & H^1_{\mathrm{\acute{e}t}}(\tilde X, \bZ/p) \ar[r] & H^1(\tilde X, \mathcal{O}_{\tilde{X}}) \\
H^0( X, \mathcal{O}_X) \ar[r]\ar@{=}[u] & H^1_{\mathrm{\acute{e}t}}( X, \bZ/p) \ar[r]\ar[u] & H^1( X, \mathcal{O}_X)\ar[u]
}
\]
Since our problem is local, we may assume that $X$ is affine and so $H^1(X,\sO_X)=0$. Since $X$ has rational singularities, then $H^1(\tilde X, \mathcal{O}_{\tilde{X}}) = 0$. Therefore, the cover $\tilde{g}\colon\tilde Y \to \tilde X$; which can be regarded as an element of the upper middle group $H^1_{\mathrm{\acute{e}t}}(\tilde X, \bZ/p)$, is realized as the normalized pullback of a $\bZ/p$-torsor over $X$. Namely, the original cover $g\colon Y\to X$ is \'etale.
\end{proof}

There is the notion of $W\sO$-rationality, which is weaker than rationality. There are KLT singularities which are not rational but $W\sO$-rational. Therefore, it is natural to look for generalization of above arguments using $W\sO$-cohomology. 
We invite the reader to consult \cite[\S 3]{PatakfalviZdanowiczOrdinaryVarietiesTrivialCanoniucalNOtUniruled} for an excellent account on Witt vector and $p$-adic \'etale cohomology as well as $W\sO$-rationality. Also see \cite{GongyoNakamuraTanakaRationalPointsFanoThreefoldsFiniteFields,ChatzistamatiouRullingHodgeWittCohomologyWittRationalSingularities,BerthelotBlochEsnaultWittVectorCohomologySingularVarieties,Chambert-LoirPointsRationnels,LeStumRigidCohomology}. This approach works well at least in the case where the Galois group is a $p$-group, as in the proof below.

\begin{proof}[Proof of \autoref{pro.FirstKeyproposition}: $W\sO$-rational and purely wild case]
Suppose that $\#G$ is a $p$-group, both $X$ and $Y$ have $W\sO$-rational singularities (e.g. KLT singularities in dimension $\leq 3$ \cite{HaconWitaszekMMPLowCharacteristic,GongyoNakamuraTanakaRationalPointsFanoThreefoldsFiniteFields}), and their dimension $d$ is arbitrary. In particular, $\phi$ is $W\sO$-rational: $W\sO_{X,\bQ} \xrightarrow{\simeq} \phi_*W\sO_{\tilde{X},\bQ}$ and $R^i\phi_*W\sO_{\tilde{X},\bQ} = 0$ for all $i>0$. According to \cite[Lemma 3.19]{PatakfalviZdanowiczOrdinaryVarietiesTrivialCanoniucalNOtUniruled}, $\phi$ is further $\bQ_p$-rational which means that $\phi_*\bQ_p=\bQ_p$ and $R^i \phi_* \bQ_p=0$ for all $i>0$. 

Now, let $x \in X(\kay)$ be arbitrary and consider the fiber diagram \autoref{eqn.FiberDiagram}; where $\tilde{X}_x$ is a proper connected $\kay$-scheme. Using proper base change for $p$-adic \'etale cohomology \cite[Proposition 3.16]{PatakfalviZdanowiczOrdinaryVarietiesTrivialCanoniucalNOtUniruled} (c.f. \cite[Expos\'e VI, 2.2.3 B]{SGA5}), we conclude that $H^0_\mathrm{\acute{e}t}(\tilde{X}_x, \bQ_p)= \bQ_p$ and $H^i_{\mathrm{\acute{e}t}}(\tilde{X}_x, \bQ_p)= 0$ if $i>0$. In particular, the $p$-adic Euler--Poincar\'e characteristic of the fiber is $\chi_{\mathrm{\acute{e}t}}(\tilde{X}_x, \bQ_p) = 1$. Now, since $\tilde{g}_x$ is a $G$-torsor with $G$ a $p$-group, we may use Crew's formula; see \cite{CrewPCoversInCharacteristicP}, \cite[Lemma 8.5]{Chambert-LoirPointsRationnels}, to get that $\chi_{\mathrm{\acute{e}t}}(\tilde{Y}_x,\bQ_p) = \# G \cdot \chi_{\mathrm{\acute{e}t}}(\tilde{X}_x,\bQ_p) = \# G$.

Next, since $\tilde{g}$ is \'etale, $\psi$ is a resolution and so it is $\bQ_p$-rational. Hence, by proper base change for $p$-adic \'etale cohomology once again, we get that $\chi_{\mathrm{\acute{e}t}}(\tilde{Y}_x,\bQ_p)= H^0(Y_x,\bQ_p) = \#Y_x(\kay)$. 

Putting these two observations together, $\# G = \#Y_x(\kay)$ for all $x \in X(\kay)$. We finish by using \autoref{claim:NatureOfEtaleNessGCovers}.
\end{proof}

We may wonder whether $W\sO$-rationality and/or rationality are necessary for the statement of \autoref{pro.FirstKeyproposition} (and of \autoref{pro.SecondProposition}). The following example shows this to be the case.

\begin{example}[{\cf \cite[Remark 5.3]{NakamuraShibataInversionOdAdjunctionForQuotientSingularities}}]
Let $E$ be an ordinary elliptic curve and $O\ne P\in E$ be a $p$-torsion point. The automorphism of $E$ sending a point $Q$ to $Q+P$ defines an action of $\bZ/p $ on $E$. This action lifts to one on the ample invertible sheaf $\mathcal O_E (P_0+\cdots +P_{p-1})$ with $P_i$ the $p$-torsion points. We also have natural actions on the affine cone $C$ and the standard resolution $\tilde C$ over $E$ associated to this invertible sheaf; the latter is a line bundle over $E$. The action on $C$ has the unique fixed point; namely the vertex of the cone, whereas the action on $\tilde C$ is free. The Galois cover $C \to C/G$ with $G=\bZ/p$ is ramified while the Galois cover $\tilde C \to \tilde C / G$ is \'etale. This shows that every arc on $C/G$ lifts to $C$ unless it maps $\eta \in \bD$ to the branch point. Observe that $C$ is not KLT nor ($W\sO$-)rational.
\end{example}

\subsection{Main result} We are now ready to establish our main result:

\begin{theorem}[Strict decent] \label{thm.StringyInvariantGoesDown} Work in \autoref{setup.MainTheorems}. Suppose that $(X_0,\Delta_0)$ is KLT of dimension $d\leq 3$. Assume that $g_1$ and $g_2$ are quasi-\'etale. In the case $d=3$ and $0<p \leq 5$, further assume that $G_1$ and $G_2$ are $p$-groups. If $f$ is not \'etale, then:
\[
M_{\st}(X_{1},\Delta_1)/G_{1}>M_{\st}(X_{2},\Delta_2)/G_{2}.
\]
Similarly, if $x_i\in X_i$ is the preimage of $x_0$ then 
\[
M_{\st}(X_{1},\Delta_1)_{x_1}/G_{1}>M_{\st}(X_{2},\Delta_2)_{x_2}/G_{2}.
\]
\end{theorem}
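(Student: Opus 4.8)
The plan is to read off both inequalities from \autoref{cor.KeyCorollary} together with \autoref{pro.SecondProposition}. Recall that \autoref{cor.KeyCorollary} already provides the non-strict bounds
\[
M_{\st}(X_1,\Delta_1)/G_1 = M_{\st}^{\utd}(\sX_1,\bar{\Delta}_1) \ge M_{\st}^{\utd}(\sX_2,\bar{\Delta}_2) = M_{\st}(X_2,\Delta_2)/G_2
\]
and their local counterparts at $x_1,x_2$, and that it promotes the global (resp.\ local) bound to a strict one exactly when $J_\infty X_1 \setminus f_\infty(J_\infty X_2)$ (resp.\ $(J_\infty X_1)_{x_1}\setminus f_\infty((J_\infty X_2)_{x_2})$) has positive measure. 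So the whole statement reduces to exhibiting abundant non-liftable arcs of $X_1$ across $f$, which is precisely \autoref{pro.SecondProposition}---once we know that $f\colon(X_2,\Delta_2)\to(X_1,\Delta_1)$ is itself a non-\'etale Galois quasi-\'etale log-cover meeting the hypotheses of that proposition.

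The first step is therefore this verification. Since $(X_0,\Delta_0)$ is KLT and $g_1$ is a crepant quasi-\'etale cover, $(X_1,\Delta_1)$ is again KLT (\cite[Corollary 2.43]{KollarSingulaitieofMMP}), of dimension $d\le 3$. The ramification divisors in the tower $g_2=g_1\circ f$ satisfy $\Ram_f + f^*\Ram_{g_1} = \Ram_{g_2}$; since $g_1$ and $g_2$ are quasi-\'etale, $\Ram_{g_1}=\Ram_{g_2}=0$, so the effective divisor $\Ram_f$ vanishes and $f$ is quasi-\'etale. Hence $K_{X_2}=f^*K_{X_1}$, and combining this with $K_{X_i}+\Delta_i=g_i^*(K_{X_0}+\Delta_0)$ for $i=1,2$ gives $K_{X_2}+\Delta_2=f^*(K_{X_1}+\Delta_1)$ and therefore $\Delta_2=f^*\Delta_1$. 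For the Galois property, from $K(X_0)\subseteq K(X_1)\subseteq K(X_2)$ with $K(X_2)/K(X_0)$ Galois of group $G_2$ and $K(X_1)/K(X_0)$ Galois of group $G_1$, the Galois correspondence furnishes a normal subgroup $H\trianglelefteq G_2$ with $K(X_1)=K(X_2)^H$, $G_2/H\cong G_1$, and $K(X_2)/K(X_1)$ Galois with group $H$. Finally, when $d=3$ and $0<p\le 5$ the standing hypothesis makes $G_2$ a $p$-group, hence so is its subgroup $H$---exactly the extra assumption that \autoref{pro.SecondProposition} requires.

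The second step is to apply \autoref{pro.SecondProposition} to $f$, now a non-\'etale Galois quasi-\'etale log-cover with group $H$ over the KLT pair $(X_1,\Delta_1)$ of dimension $\le 3$ (so that \autoref{set:MainSetup} genuinely applies). It yields that $J_\infty X_1\setminus f_\infty(J_\infty X_2)$ has positive measure, and \autoref{cor.KeyCorollary} then makes the global inequality strict. For the local statement, $g_1^{-1}(x_0)=\{x_1\}$ and $g_2^{-1}(x_0)=\{x_2\}$ force $f^{-1}(x_1)=\{x_2\}$, and $x_2$ is fixed by $G_2$ and hence by $H$; the second half of \autoref{pro.SecondProposition} then gives that $(J_\infty X_1)_{x_1}\setminus f_\infty((J_\infty X_2)_{x_2})$ has positive measure, and \autoref{cor.KeyCorollary} again upgrades the local bound to a strict one.

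Essentially all of the difficulty lives upstream: the hard analytic input is the abundance of non-liftable arcs (\autoref{pro.SecondProposition}, itself resting on \autoref{lem.PositiveMeasureLemma} and \autoref{pro.FirstKeyproposition}) together with the change-of-variables identity behind \autoref{cor.KeyCorollary}. The only genuinely new work is the bookkeeping of the first step---the chain rule for ramification divisors, the Galois correspondence along the tower of function fields, and the descent of the $p$-group hypothesis to $H$---and the one point to be careful about is to record explicitly that $(X_1,\Delta_1)$ is KLT of dimension $\le 3$, and that it is the failure of $f$ (not of $g_1$ or $g_2$) to be \'etale that is being used.
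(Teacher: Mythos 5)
Your proposal is correct and takes the same route as the paper: the paper's own proof is a one-line instruction to combine \autoref{cor.KeyCorollary} with \autoref{pro.SecondProposition}. What you add is the bookkeeping that the paper leaves implicit---verifying that $f$ is itself a non-\'etale Galois quasi-\'etale log-cover over the KLT pair $(X_1,\Delta_1)$ (via the chain rule for ramification divisors, crepancy, and the Galois correspondence giving $H=\Gal(K(X_2)/K(X_1))\trianglelefteq G_2$), that $H$ inherits the $p$-group hypothesis as a subgroup of $G_2$, and that $x_2$ is fixed by $H$ with $f(x_2)=x_1$. This is exactly the right content to record, since without it the application of \autoref{pro.SecondProposition} to $f$ with group $H$ is not justified on the face of the hypotheses of \autoref{setup.MainTheorems}.
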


\begin{proof} Put together \autoref{cor.KeyCorollary} and \autoref{pro.SecondProposition}.
\end{proof}

\section{DCC for Stringy Motives of Log Terminal Surface Singularities} \label{sec.DCCSurfaces}

In this section, we discuss the \emph{Descending Chain Condition} (DCC) for stringy motives of log terminal surface singularities. We work in the setup of \autoref{sec.ExplicitDescriptionSurfaceCase}. We commence by using the formula \autoref{eqn.DescriptionOfStringyInvariantsForCurves} to see that, as a Laurent power series in $\bL^{-1/r}$, the invariant $M_{\st}(X)_x/G$ can be expressed as: 
\begin{multline*}
\sum_{\iota \in I/G}(\bL+m_{\iota})(\bL-1)(\bL^{-a_{\iota}}+\bL^{-2a_{\iota}}+\cdots)\\
+\sum_{ [\iota,\kappa] \in H/G }(\bL-1)^{2}(\bL^{-a_{\iota}}+\bL^{-2a_{\iota}}+\cdots)(\bL^{-a_{\kappa}}+\bL^{-2a_{\kappa}}+\cdots),
\end{multline*}
which has degree $2-\min\{a_{\iota}\}$, where we set $\deg\bL^{1/r}=1/r$. Modulo terms of degrees $<1$, the above series becomes
\begin{equation}
\sum_{\iota \in I/G}\bL^{2}(\bL^{-a_{\iota}}+\bL^{-2a_{\iota}}+\cdots)+\sum_{[\iota,\kappa] \in H/G}\bL^{2}(\bL^{-a_{\iota}}+\bL^{-2a_{\iota}}+\cdots)(\bL^{-a_{\kappa}}+\bL^{-2a_{\kappa}}+\cdots).\label{eq:mod deg < 1}
\end{equation}
This shows that $M_{\st}(X)_x/G$ has nonnegative coefficients for the terms $\bL^{b}$ with $1\le b<2$, which motivates the following definition.
\begin{definition}
With notation as above, we define $N(x,X,G)\in\bN[\bL^{1/r}]$ to be $M_{\st}(X)_x/G$ with all the terms of degree $<1$ removed. We define $C(x,X,G)\in\bN$ to be the sum of the coefficients of $N(x,X,G)$. 
\end{definition}

\begin{remark} \label{rem.Monotonicity}
Clearly, if $M_{\st}(X)_x/G \geq M_{\st}(X')_{x'}/G'$ then $N(x,X,G)\ge N(x',X',G')$. 
\end{remark}

Note that the expression \autoref{eq:mod deg < 1} shows that each vertex of $\Gamma/G$ with log discrepancy $\le 1$ contributes at least 1 to $C(x,X,G)$. From \autoref{lem:log discrep at most 1}, we then obtain:

\begin{lemma} \label{lem.BoundingVertices}
The graph $\Gamma/G$ has at most $C(x,X,G)$ non-special vertices (i.e., the vertices coming from the minimal resolution) and
at most $C(x,X,G)+1$ vertices.
\end{lemma}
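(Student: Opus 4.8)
The plan is to read the bound off directly from the normal form \eqref{eq:mod deg < 1} for $M_{\st}(X)_x/G$, together with the log discrepancy estimate of \autoref{lem:log discrep at most 1}. First I would recall that, by the definition of $N(x,X,G)$ and by \eqref{eq:mod deg < 1}, the polynomial $N(x,X,G)$ is precisely the part of degree $\ge 1$ of
\[
\sum_{\iota}\LL^{2}\big(\LL^{-a_{\iota}}+\LL^{-2a_{\iota}}+\cdots\big)+\sum_{E_{\iota}\cap E_{\kappa}\ne\emptyset}\LL^{2}\big(\LL^{-a_{\iota}}+\cdots\big)\big(\LL^{-a_{\kappa}}+\cdots\big);
\]
in particular every coefficient of $N(x,X,G)$ is nonnegative, so that no cancellation occurs between the contributions of the various vertices and edges of $\Gamma/G$.

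Next I would observe that each vertex $\iota$ of $\Gamma/G$ with $a_{\iota}\le 1$ produces the monomial $\LL^{2}\cdot\LL^{-a_{\iota}}=\LL^{2-a_{\iota}}$, whose degree $2-a_{\iota}$ is $\ge 1$; hence this monomial survives in $N(x,X,G)$ and contributes (at least) $1$ to the coefficient of $\LL^{2-a_{\iota}}$. Summing over all such vertices, and using that the edge contributions are nonnegative and that there is no cancellation, gives
\[
C(x,X,G)\ \ge\ \#\{\,\iota\in I/G : a_{\iota}\le 1\,\}.
\]
Finally I would invoke \autoref{lem:log discrep at most 1}: among the exceptional prime divisors of the modified minimal resolution, at most one --- the one arising from the extra blow-up at a $G$-fixed point --- can have log discrepancy $>1$, and that divisor is $G$-invariant, hence gives a single vertex of $\Gamma/G$, which is the unique special vertex. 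Therefore every non-special vertex $\iota$ satisfies $a_{\iota}\le 1$, so $\Gamma/G$ has at most $C(x,X,G)$ non-special vertices, and adding back the at most one special vertex yields at most $C(x,X,G)+1$ vertices in total.

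I do not expect a genuine obstacle here, since the relevant computation is already performed in \eqref{eq:mod deg < 1}. The only points needing a moment's care are that deleting all terms of degree $<1$ cannot damage any of the surviving leading monomials $\LL^{2-a_{\iota}}$ (they all have degree $\ge 1$), and that distinct non-special vertices sharing the same log discrepancy each still contribute to the coefficient sum $C(x,X,G)$ --- both being immediate from the nonnegativity recorded in the first step. One should also double-check that the extra blow-up divisor is indeed $G$-stable (it is, being the exceptional divisor of a $G$-equivariant blow-up of a $G$-fixed point), so that it accounts for only one vertex of $\Gamma/G$.
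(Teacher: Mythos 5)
Your argument is correct and coincides with the paper's own (very short) justification: each vertex of $\Gamma/G$ with $a_{\iota}\le 1$ contributes the nonnegative term $\LL^{2-a_{\iota}}$ of degree $\ge 1$ to $N(x,X,G)$, hence at least $1$ to $C(x,X,G)$, and by \autoref{lem:log discrep at most 1} only the (at most one, $G$-invariant) special vertex can fail this bound. Your extra remarks on non-cancellation and on the special vertex being a single vertex of $\Gamma/G$ are exactly the implicit points in the paper's one-line deduction.
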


\begin{definition}Let $r$ be a positive integer. We define $\sA_r^2 \subset \bZ \llparenthesis \bL^{-1/r} \rrparenthesis$ to be the subset of elements $M_{\st}(X)_x/G$ such that $X$ is a log terminal surface with an action of a finite group $G$ and such that $rK_X$ is Cartier and the $G$-action fixes a point $x\in X$.
\end{definition}
\begin{lemma}
\label{lem:bounded C(X,G)} Fix a positive integer $r$ and a polynomial $N \in \bN[\bL^{1/r}]$. Then, there exist only finitely many elements $\alpha \in \sA_r^2$ being equal to a stringy motive $M_{\st}(X)_x/G$ such that $N(x,X,G)=N$. 
\end{lemma}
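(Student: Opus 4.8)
The plan is to reduce everything to finitely much combinatorial data via the explicit formula \autoref{eqn.DescriptionOfStringyInvariantsForCurves}, working in the setup of \autoref{sec.ExplicitDescriptionSurfaceCase}. Fix $r$ and $N\in\bN[\LL^{1/r}]$, and let $C$ denote the sum of the coefficients of $N$; this is a fixed nonnegative integer. Let $(x,X,G)$ be any datum as in the definition of $\sA_r^2$ with $N(x,X,G)=N$; then $C(x,X,G)=C$ by definition. By \autoref{lem.BoundingVertices}, the quotient graph $\Gamma/G$ of the modified minimal resolution of $x\in X$ has at most $C+1$ vertices, and since $\Gamma/G$ is a tree of the shape of \autoref{fig:straight} or \autoref{fig:three-branches} it has at most $C$ edges. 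Up to isomorphism there are only finitely many such graphs.

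Next I would bound, once $\Gamma/G$ is fixed, the two families of labels entering \autoref{eqn.DescriptionOfStringyInvariantsForCurves}. The log discrepancies satisfy $a_\iota\in\tfrac1r\bZ$ and $a_\iota>0$, while \autoref{lem:log discrep at most 1} gives $a_\iota\le1$ for every $\iota$ with at most one exception, the exception satisfying $a_\iota\le2$. Hence each $a_\iota$ lies in the finite set $\tfrac1r\bZ\cap(0,2]$ (of cardinality $2r$), so the tuple $(a_\iota)_{\iota\in I/G}$ ranges over a finite set. As for the integers $m_\iota$ in $\{E_\iota^\circ/G\}=\LL+m_\iota$, we have $m_\iota\le1$ by the discussion preceding \autoref{eqn.DescriptionOfStringyInvariantsForCurves}; moreover $E_\iota^\circ/G$ is a rational curve obtained from $E_\iota/G\cong\bP^1$ by deleting the (at most $\deg_{\Gamma/G}(\iota)\le3$) points which are images of the intersections of $E_\iota$ with the adjacent components, so $m_\iota\ge1-\deg_{\Gamma/G}(\iota)\ge-2$. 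Thus $(m_\iota)_\iota$ also ranges over a finite set.

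Putting this together, the isomorphism class of the labeled graph $\bigl(\Gamma/G,\,(a_\iota)_\iota,\,(m_\iota)_\iota\bigr)$ varies in a finite set as $(x,X,G)$ runs over all data realizing $N(x,X,G)=N$. Since \autoref{eqn.DescriptionOfStringyInvariantsForCurves} exhibits $M_{\st}(X)_x/G\in\bZ\llparenthesis\LL^{1/r}\rrparenthesis$ as an expression depending only on this isomorphism class, it follows that $M_{\st}(X)_x/G$ can take only finitely many values; a fortiori the set $\{\alpha\in\sA_r^2 : \alpha=M_{\st}(X)_x/G \text{ for some } (x,X,G) \text{ with } N(x,X,G)=N\}$ is finite, which is the assertion.

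I do not expect a serious obstacle; the argument is bookkeeping on top of \autoref{lem.BoundingVertices} and \autoref{lem:log discrep at most 1}. The two places where something real happens are: (i) fixing $N$ pins down $C(x,X,G)$ and hence, via \autoref{lem.BoundingVertices}, bounds the size of $\Gamma/G$ independently of $(x,X,G)$; and (ii) the log discrepancies cannot accumulate, because $r$ is fixed (so their denominators are bounded) and \autoref{lem:log discrep at most 1} bounds them from above. The only step needing a short geometric argument rather than a citation is the lower bound $m_\iota\ge-2$, and even that could be weakened to the bare observation that $m_\iota$ is constrained once the (now bounded) graph $\Gamma/G$ is known.
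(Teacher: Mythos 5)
Your argument is correct and follows essentially the same route as the paper's proof: $N$ determines $C(x,X,G)$, \autoref{lem.BoundingVertices} bounds the number of vertices of $\Gamma/G$, and the log discrepancies range over the finite set $\frac{1}{r}\ZZ\cap(0,2]$ by \autoref{lem:log discrep at most 1}. Your only addition is the explicit bound on the $m_{\iota}$, which the paper subsumes by asserting that $M_{\st}(X)_x/G$ is determined by $\Gamma/G$ together with the $a_{\iota}$ (indeed $m_{\iota}=1-\deg_{\Gamma/G}(\iota)$ is itself read off from the graph, since each adjacent orbit contributes exactly one deleted point of $E_{\iota}/G\cong\PP^1$), so your extra step is harmless and merely makes that bookkeeping explicit.
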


\begin{proof}
The invariant $M_{\st}(X)/G$ is determined by the data of the dual graph $\Gamma/G$ associated to the modified minimal resolution of $X$ and the log discrepancies $a_{\iota}$ assigned to their vertices. Since $N(x,X,G)$
determines $C(x,X,G)$, the number of vertices of $\Gamma/G$ is bounded by \autoref{lem.BoundingVertices}. Thus, there are only finitely many possibilities for the graph $\Gamma/G$. For each possibility, there are only finitely many ways to assign numbers $a_{\iota}$ to vertices as these numbers must belong to the finite set $\frac{1}{r}\bZ\cap(0,2]$. The result then follows.
\end{proof}

\begin{proposition}[DCC for surface stringy motives] 
\label{prop:DCC} Fix a positive integer $r$. Then, $\sA_r^2$ satisfies DCC: every descending chain
\[
M_{\st}(X_{0})_{x_0}/G_{0}\ge M_{\st}(X_{1})_{x_1}/G_{1}\ge M_{\st}(X_{2})_{x_2}/G_{2} \ge \cdots
\]
of elements in $\sA_r^2$ eventually stabilizes. 
\end{proposition}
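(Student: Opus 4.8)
The plan is to reduce the descending chain condition for $\sA_r^2$ to two ingredients already established: the monotonicity of the truncation $N(-)$ recorded in \autoref{rem.Monotonicity}, and the finiteness statement of \autoref{lem:bounded C(X,G)}. Concretely, given a descending chain $M_{\st}(X_0)_{x_0}/G_0 \ge M_{\st}(X_1)_{x_1}/G_1 \ge \cdots$ in $\sA_r^2$, I would set $N_i \coloneqq N(x_i,X_i,G_i)\in\bN[\LL^{1/r}]$ and invoke \autoref{rem.Monotonicity} to get a descending chain $N_0 \ge N_1 \ge N_2 \ge \cdots$ in $\bN[\LL^{1/r}]$, where the order is the one coming from the Poincar\'e realization (compare leading coefficients, i.e.\ highest-degree terms).

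The second step is to observe that $\bN[\LL^{1/r}]$, with this order, satisfies DCC. The key point is that degrees cannot grow: since each $N_i$ has nonnegative coefficients, if $\deg N_i>\deg N_0$ then $N_0-N_i$ would have negative leading coefficient, contradicting $N_0\ge N_i$; hence $\deg N_i\le\deg N_0 =: D$ for all $i$. (The same nonnegativity shows every nonzero $N_i$ is $>0$, so the $N_i$ are bounded below by $0$, and once some $N_i=0$ the tail is identically $0$.) Thus every $N_i$ lies in the finitely generated submonoid $\bigoplus_{k=0}^{rD}\bN\cdot\LL^{k/r}$, on which our order is precisely the lexicographic order on the coefficient vectors read from the top exponent downward; since the lexicographic order on $\bN^{rD+1}$ is a well-order, the chain $(N_i)_i$ stabilizes, say $N_i=N$ for all $i\ge i_0$.

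The last step is to feed this fixed polynomial $N$ into \autoref{lem:bounded C(X,G)}: there are only finitely many $\alpha\in\sA_r^2$ with $\alpha=M_{\st}(X)_x/G$ and $N(x,X,G)=N$. Hence $\{M_{\st}(X_i)_{x_i}/G_i : i\ge i_0\}$ is a finite subset of the totally ordered set $\bZ\llparenthesis\LL^{1/r}\rrparenthesis$, and a non-increasing sequence taking finitely many values in a totally ordered set is eventually constant; so the original chain stabilizes.

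I do not expect a genuine obstacle in this argument: its substance has already been packed into the preceding lemmas, above all \autoref{lem:bounded C(X,G)}, which itself rests on \autoref{lem.BoundingVertices} (bounding the number of vertices of $\Gamma/G$) and on the explicit shape of \autoref{eqn.DescriptionOfStringyInvariantsForCurves}. The only point that needs a little care is the elementary DCC for $\bN[\LL^{1/r}]$ — specifically the remark that discarding the terms of degree $<1$ together with nonnegativity of coefficients forces the degrees of the $N_i$ to stay bounded, so that the comparison ultimately takes place in a finite-rank lattice with the lexicographic order.
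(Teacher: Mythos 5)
Your proposal is correct and follows essentially the same route as the paper: monotonicity of $N(-)$ gives a descending chain in $\bN[\LL^{1/r}]$, which stabilizes, and then \autoref{lem:bounded C(X,G)} forces the original chain to stabilize. The only difference is that you spell out why $\bN[\LL^{1/r}]$ satisfies DCC (bounded degree plus lexicographic well-order on a finite-rank lattice), a step the paper simply asserts; your justification is sound.
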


\begin{proof}According to \autoref{rem.Monotonicity}, from the chain in the statement, we obtain the following decreasing chain
\[
N(x_0, X_0, G_0) \geq N(x_1,X_1,G_1) \geq N(x_2,X_2,G_2) \geq \cdots.
\]
of polynomials in $\bN[\bL^{1/r}]$. Therefore, it must stabilize. By \autoref{lem:bounded C(X,G)}, the chain of the statement stabilizes as well.
\end{proof}

\begin{question}[DCC for threefold stringy motives] \label{que.DCCForThreefolds}
Fix a positive integer $r$. Let $\sA_r^3 \subset \hat{\sM}'_{\kay,r}$ be the subset of stringy motives $M_{\st}(X,\Delta)_x/G$ where $(X,\Delta)$ is a KLT $3$-dimensional log pair such that $r(K_X + \Delta)$ is Cartier and $G$ acts on $X$ fixing the boundary $\Delta$ as well as $x \in X$. Does $\sA_r^3$ satisfy the DCC? If $0<p \leq 5$, does DCC hold when we consider $p$-groups only? See \autoref{rem.StrongerStatement}. 
\end{question}

\section{Applications to \'Etale Fundamental Groups}

We finish by establishing some consequences of the above results. The main consequence is that \autoref{que.questionIntro.} has an affirmative answer if $(X,\Delta)$ is a KLT surface. Indeed:

\begin{theorem}
Let 
\[
X_0 \xleftarrow{f_0} X_1 \xleftarrow{f_1} X_2 \xleftarrow{f_2} X_3 \leftarrow \cdots
\]
be a tower of Galois quasi-\'etale covers of log terminal surfaces over an algebraically closed field. Then, $f_i$ is \'etale for all $i \gg 0$.
\end{theorem}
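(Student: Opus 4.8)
The plan is to play off the strict descent of stringy motives under ramified Galois quasi-\'etale covers (\autoref{thm.StringyInvariantGoesDown}) against the DCC for surface stringy motives (\autoref{prop:DCC}). Write $g_n\coloneqq f_{n-1}\circ\cdots\circ f_0\colon X_n\to X_0$; by hypothesis (as in \autoref{que.questionIntro.}) each composite $g_n$ is a Galois quasi-\'etale cover, with finite Galois group $G_n\coloneqq\Gal\big(K(X_n)/K(X_0)\big)$. Since each $g_n$ is crepant we get $r_{X_n}\mid r_{X_0}\eqqcolon r$, so $rK_{X_n}$ is Cartier for all $n$; moreover each $f_n$ is itself a Galois quasi-\'etale log-cover, being crepant and \'etale in codimension one, with Galois group $\ker(G_{n+1}\twoheadrightarrow G_n)$. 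Assume, towards a contradiction, that infinitely many $f_n$ fail to be \'etale.

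First I would localise at the singular locus of $X_0$. As a log terminal surface, $X_0$ is $\bQ$-factorial with a finite singular set $\{p_1,\dots,p_k\}$, and by purity of the branch locus each $g_n$ is \'etale over $X_0\setminus\{p_1,\dots,p_k\}$; hence $\Sing X_n\subseteq g_n^{-1}\big(\{p_1,\dots,p_k\}\big)$ and each $f_n$ is \'etale away from $g_n^{-1}\big(\{p_1,\dots,p_k\}\big)$. For each $j$ choose a compatible system of closed points $q_0^{(j)}=p_j$ and $q_{n+1}^{(j)}\in f_n^{-1}\big(q_n^{(j)}\big)$, and let $D_n^{(j)}\subseteq G_n$ be the decomposition group of $q_n^{(j)}$ over $X_0$. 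Then $\widehat{\sO}_{X_n,q_n^{(j)}}$ is a Galois extension of $\widehat{\sO}_{X_0,p_j}$ with group $D_n^{(j)}$, which fixes the closed point, and since $M_{\st}(X_n)_{q_n^{(j)}}/D_n^{(j)}$ depends only on this germ with its $D_n^{(j)}$-action (\autoref{rem.StringyMotivesLogResolution}), one obtains for every $j$ a tower of Galois quasi-\'etale covers of log terminal surface germs exactly in the situation of \autoref{setup.MainTheorems}, all of whose local stringy motives lie in $\sA_r^2$.

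Next I would run the descent and invoke the DCC. The local part of \autoref{thm.StringyInvariantGoesDown} applied to the germ tower $\widehat{(X_0,p_j)}\leftarrow\widehat{(X_n,q_n^{(j)})}\leftarrow\widehat{(X_{n+1},q_{n+1}^{(j)})}$ — in dimension $2$ there is no constraint on the groups — gives, for all $n$ and $j$,
\[
M_{\st}(X_n)_{q_n^{(j)}}/D_n^{(j)}\ \ge\ M_{\st}(X_{n+1})_{q_{n+1}^{(j)}}/D_{n+1}^{(j)},
\]
with strict inequality as soon as $f_n$ is ramified over $q_n^{(j)}$. Because the whole tower is equivariant, if $f_n$ is not \'etale then it is ramified over some $p_j$, hence over every point of the orbit $g_n^{-1}(p_j)$, in particular over $q_n^{(j)}$. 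Since there are only finitely many indices $j$, the assumption that infinitely many $f_n$ are non-\'etale forces, by the pigeonhole principle, a single $j$ for which $f_n$ is ramified over $q_n^{(j)}$ for infinitely many $n$. Then $\big(M_{\st}(X_n)_{q_n^{(j)}}/D_n^{(j)}\big)_{n\ge 0}$ is a non-increasing chain in $\sA_r^2$ that strictly decreases infinitely often, contradicting \autoref{prop:DCC}. Hence only finitely many $f_n$ are non-\'etale, which is precisely the assertion.

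I expect the main obstacle to be the bookkeeping in the localisation step, rather than any new idea: a priori $g_n^{-1}(p_j)$ is a set of several points transitively permuted by $G_n$, so one does not land directly in the hypotheses of \autoref{thm.StringyInvariantGoesDown} (whose local version needs the acting group to fix the chosen point). Passing to decomposition groups and formal germs cures this, since over a fixed point the completion is Galois over the base with the full decomposition group; the equivariance of the tower then makes ``$f_n$ ramified somewhere over $p_j$'' equivalent to ``$f_n$ ramified over the chosen compatible point $q_n^{(j)}$'', so that a single local DCC chain already detects all ramification occurring over $p_j$ throughout the tower.
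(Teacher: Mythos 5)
Your overall strategy (argue by contradiction, drive a strictly decreasing chain via strict descent of stringy motives, and contradict the DCC of \autoref{prop:DCC}) is the same as the paper's, and most of your bookkeeping (surjectivity of $G_{n+1}\to G_n$, $G$-stability of the \'etale locus of $f_n$, so ``ramified over $p_j$'' $\Leftrightarrow$ ``ramified at the chosen $q_n^{(j)}$'', the pigeonhole over the finitely many singular points, and the uniform index $r$) is fine. The gap is in the step where you claim the germ tower $\widehat{(X_0,p_j)}\leftarrow\widehat{(X_n,q_n^{(j)})}\leftarrow\widehat{(X_{n+1},q_{n+1}^{(j)})}$ with decomposition groups is ``exactly in the situation of \autoref{setup.MainTheorems}'' and then invoke the local statement of \autoref{thm.StringyInvariantGoesDown}. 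That theorem (and \autoref{cor.KeyCorollary}, \autoref{pro.SecondProposition}) is proved for \emph{varieties}, and its local form requires the chosen point of the base to have a \emph{single} preimage in each cover, fixed by the full acting group; it is not proved for spectra of complete local rings, and the remark that $M_{\st}(-)_x/G$ only depends on the completion does not by itself transport the descent inequality to germ covers. If you try to algebraize your germ data, the natural model has base $X_n/D_n^{(j)}$ (whose germ at the image $z$ of $q_n^{(j)}$ is indeed $\widehat{\sO}_{X_0,p_j}$), but then $X_{n+1}\to X_n/D_n^{(j)}$ is generically Galois with group $\pi^{-1}(D_n^{(j)})$ (where $\pi\colon G_{n+1}\to G_n$), not $D_{n+1}^{(j)}$, and the preimage of $z$ in $X_{n+1}$ is the whole fiber $f_n^{-1}(q_n^{(j)})$, which may consist of several points; so the hypotheses of the local statements fail as written, and your chain of inequalities between $M_{\st}(X_n)_{q_n^{(j)}}/D_n^{(j)}$ and $M_{\st}(X_{n+1})_{q_{n+1}^{(j)}}/D_{n+1}^{(j)}$ is asserted rather than derived.

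The gap is fillable, and the fix is exactly the maneuver the paper uses and that you skipped: pass through a \emph{global} inequality and then subtract the smooth part using \autoref{eqn.GlobalvsLocalSurfaceCaseManyPoints}. The paper's route is to shrink $X_0$ to a single singular point $x_0$, apply the global strict descent \autoref{thm.StringyInvariantGoesDown} with the full groups $G_n$ over $X_0$, and then subtract the common class $\{X_0\setminus\{x_0\}\}$, using $M_{\st}(X_n)/G_n=\{X_0\setminus\{x_0\}\}+M_{\st}(X_n)_{x_n}/\Stab(x_n)$, to land in $\sA_r^2$; no decomposition-group germs and no pigeonhole are needed. Alternatively, to salvage your per-point chains: shrink $Z\coloneqq X_n/D_n^{(j)}$ around $z$ so that $z$ is the only branch point, apply the global theorem to $X_n\to Z$ and $X_{n+1}\to Z$ with groups $D_n^{(j)}$ and $\pi^{-1}(D_n^{(j)})$, and then use \autoref{eqn.GlobalvsLocalSurfaceCaseManyPoints} on both sides together with $\Stab_{\pi^{-1}(D_n^{(j)})}(q_{n+1}^{(j)})=D_{n+1}^{(j)}$ to convert the global strict inequality into the local one you want. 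Either way, the inequality must be obtained at the level of varieties with a subsequent local extraction, not by quoting the local descent statement for formal germs.
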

\begin{proof}
Suppose for the sake of contradiction that $f_i$ is not \'etale for infinitely many $i$. Let $G_i \coloneqq \Gal(X_i/X_0)$. 
Since each $f_i$ is \'etale over the regular locus of $X_0$, which has only finitely many singular points, there exists a singular point $x_0\in X_0$ such that for infinitely many $i$, $f_i$ is not \'etale over $x_0$. Shrinking $X_0$ around this point, we may suppose that $X_0$ has the unique singular point $x_0$.  
Note that, if $r$ is the Gorenstein index of $X_0$, then $r$ is a multiple of the Gorenstein index of every $X_{i}$. By \autoref{thm.StringyInvariantGoesDown}, we get a decreasing sequence
\[
M_{\st}(X_{0})/G_{0}\ge M_{\st}(X_{1})/G_{1}\ge M_{\st}(X_{2})/G_{2}\ge \cdots
\]
such that infinitely many inequalities are strict. 
For each $i$, let us choose a point $x_i\in X_i$ lying over $x_0$. Then,
\[
M_{\st}(X_i)/G_i = \{X_0 \setminus \{x_0\}\} +  M_{\st}(X_i)_{x_{i}}/\Stab(x_{i})
\]
by using \autoref{eqn.GlobalvsLocalSurfaceCaseManyPoints}. Hence, by subtracting $\{X_0 \setminus \{x_0\}\}$ from the above sequence, we obtain a non-terminating decreasing sequence 
\[
M_{\st}(X_0)_{x_{0}}/\Stab(x_{0}) \ge
M_{\st}(X_1)_{x_{1}}/\Stab(x_{1}) \ge
M_{\st}(X_2)_{x_{2}}/\Stab(x_{2}) \ge \cdots
\]
of elements in the set $\sA_r^2\subset\bZ \llparenthesis \bL^{-1/r} \rrparenthesis$ (which was defined in \autoref{sec.DCCSurfaces}). This contradicts \autoref{prop:DCC}.
\end{proof}

\begin{remark}
A little more effort enables us to prove the above theorem by the DCC of the sequence $M_{\st}(X_i)/\Stab(x_{i})$ without shrinking $X_0$ (but still reducing to the DCC of the ``local'' stringy motives $M_{\st}(X_i)_{x_i}/\Stab(x_{i})$). This approach would be a more faithful practice of the strategy explained in \autoref{sec.Introduction}.
\end{remark}

A similar (but simpler) argument shows the following; see \autoref{cor.FinitenessFUndGrouosMixChar} and its proof. 

\begin{theorem} \label{thm.ApplicationFundamentalGroups}
The local \'etale fundamental group of a log terminal surface germ is finite.
\end{theorem}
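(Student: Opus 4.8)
The plan is to run the same argument as in the preceding theorem, which here is lighter because the ``bad point'' is the germ's point itself, so no preliminary reduction to a singular point is needed. Let $x_0\in X_0$ be a log terminal surface germ. Since KLT surfaces are automatically $\bQ$-factorial and log terminal, there is no loss in taking $\Delta=0$, and after shrinking we may assume $X_0$ is a normal affine surface whose only singular point is $x_0$. Suppose for contradiction that the local \'etale fundamental group $\Pi$ of $x_0\in X_0$ (that is, $\piet$ of the punctured henselian, or formal, germ) is infinite. An infinite profinite group admits a strictly descending chain of open normal subgroups $\Pi=N_0\supsetneq N_1\supsetneq N_2\supsetneq\cdots$; passing to the associated connected Galois covers, normalizing $\sO_{X_0,x_0}^{\mathrm h}$ inside them, and algebraizing (replacing $X_0$ by \'etale neighborhoods of $x_0$ as needed, which affects none of the invariants below by \autoref{rem.StringyMotivesLogResolution}), we obtain a tower
\[
X_0\xleftarrow{f_0}X_1\xleftarrow{f_1}X_2\xleftarrow{f_2}\cdots
\]
of Galois quasi-\'etale covers of log terminal surfaces with $G_i\coloneqq\Gal(X_i/X_0)=\Pi/N_i$ of strictly increasing order; the $f_i$ are crepant, so each $X_i$ is again log terminal, and since each $X_i$ is, at the unique point $x_i$ over $x_0$, the spectrum of a strictly henselian local normal domain, $G_i$ fixes $x_i$. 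This places each consecutive triangle $X_i\xleftarrow{f_i}X_{i+1}$ over $X_0$ into \autoref{setup.MainTheorems} with $\Delta=0$.

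Next I would verify that every $f_i$ is non-\'etale. Indeed $f_i\colon X_{i+1}\to X_i$ is a connected finite cover of degree $[K(X_{i+1}):K(X_i)]=\#G_{i+1}/\#G_i>1$; were it \'etale, then after henselizing at $x_i$ it would become a finite \'etale cover of the strictly henselian local ring $\sO_{X_i,x_i}^{\mathrm h}$, hence trivial, forcing $\deg f_i=1$ --- a contradiction (compare the discussion around \autoref{claim:NatureOfEtaleNessGCovers}). Consequently \autoref{thm.StringyInvariantGoesDown} applies to each such triangle: since $d=2\le 3$ the $p$-group hypothesis is vacuous, and we conclude
\[
M_{\st}(X_i)_{x_i}/G_i\;>\;M_{\st}(X_{i+1})_{x_{i+1}}/G_{i+1}\qquad\text{for every } i\ge 0.
\]

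Finally, let $r$ be the Cartier index of $K_{X_0}$; since the $f_i$ are crepant, $rK_{X_i}$ is Cartier for all $i$, so the displayed relations exhibit an infinite strictly descending chain inside the set $\sA_r^2\subset\bZ\llparenthesis\bL^{-1/r}\rrparenthesis$ of \autoref{sec.DCCSurfaces}. This contradicts \autoref{prop:DCC}. Hence $\Pi$ is finite, as claimed.

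The only genuine work is in the first paragraph: turning the hypothesis ``$\Pi$ infinite'' into an honest tower of quasi-\'etale covers of finite-type normal surfaces to which \autoref{thm.StringyInvariantGoesDown} and the stringy-motive formalism literally apply (algebraization of the henselian-local covers, together with the bookkeeping that the distinguished points $x_i$ are $G_i$-fixed so that the local stringy motives lie in $\sA_r^2$). I do not anticipate a real obstacle --- every substantive input is already contained in \autoref{thm.StringyInvariantGoesDown} and \autoref{prop:DCC} --- which is presumably why the authors describe this argument as ``similar (but simpler).''
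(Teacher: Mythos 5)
Your proposal is correct and follows essentially the route the paper intends: the authors leave this as ``a similar (but simpler) argument'' to the tower theorem, and your reduction---converting an infinite local \'etale fundamental group into an infinite tower of non-\'etale Galois quasi-\'etale covers of the germ, algebraizing, and then invoking \autoref{thm.StringyInvariantGoesDown} together with \autoref{prop:DCC} on the local stringy motives in $\sA_r^2$---is exactly that argument. The supporting points you flag (strictly henselian local rings forcing non-\'etaleness of each $f_i$, invariance of $M_{\st}(X)_x/G$ under passage to the henselization as in \autoref{rem.StringyMotivesLogResolution}, and $G_i$ fixing the unique point $x_i$ over $x_0$) are all sound.
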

As an immediate consequence, we obtain:
\begin{corollary}
Big opens of weak del Pezzo surfaces have finite \'etale fundamental group.
\end{corollary}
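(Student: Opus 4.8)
The plan is to first reduce the statement to the finiteness of $\piet$ of the smooth locus, and then to feed an infinite tower of quasi-\'etale covers into \autoref{thm.StringyInvariantGoesDown} and \autoref{prop:DCC} (equivalently, into the theorem preceding \autoref{thm.ApplicationFundamentalGroups}). Let $S$ be a weak del Pezzo surface --- a normal projective surface with log terminal singularities and $-K_S$ nef and big (the smooth case being immediate from purity and the simple connectedness of smooth projective rational surfaces) --- and let $U\subseteq S$ be a big open, so that $S\setminus U$ is a finite set of points. I would set $U^{\circ}\coloneqq U\cap S_{\reg}$. Since $S_{\reg}\setminus U^{\circ}$ consists of finitely many regular points, hence is of codimension $2$, Zariski--Nagata purity identifies $\piet(U^{\circ})$ with $\piet(S_{\reg})$; and because $U^{\circ}$ is a dense open of the normal surface $U$, every connected finite \'etale cover of $U$ restricts to a connected one of $U^{\circ}$, so $\piet(U^{\circ})$ surjects onto $\piet(U)$. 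Thus it suffices to prove that $\piet(S_{\reg})$ is finite.

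Assume $\piet(S_{\reg})$ is infinite. An infinite profinite group admits a strictly descending chain of open normal subgroups, which corresponds to a tower of connected finite \'etale covers $S_{\reg}=V_0\xleftarrow{}V_1\xleftarrow{}V_2\xleftarrow{}\cdots$ in which every $V_i\to V_0$ is Galois and every $f_i^{\circ}\colon V_{i+1}\to V_i$ has degree $>1$. Let $X_i$ be the normalization of $S$ in $K(V_i)$ and $g_i\colon X_i\to X_0\coloneqq S$, $f_i\colon X_{i+1}\to X_i$ the induced finite morphisms. Each $g_i$ is \'etale over $S_{\reg}$, hence \'etale in codimension $1$ on the surface $S$, and $K(X_i)/K(S)$ is Galois; since $K_S$ is $\bQ$-Cartier ($S$ is klt) and the ramification divisor of a cover \'etale in codimension $1$ vanishes, one gets $K_{X_i}=g_i^{*}K_S$, so $g_i$ and $f_i$ are Galois quasi-\'etale log-covers with trivial boundary. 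As klt is preserved by crepant quasi-\'etale covers and $-K_{X_i}=g_i^{*}(-K_S)$ is nef and big, each $X_i$ is again a weak del Pezzo surface, hence rational, hence a smooth projective model of $X_i$ is simply connected and $\piet(X_i)=1$. Therefore $X_i$ has no connected \'etale cover of degree $>1$, and $f_i$ --- a quasi-\'etale cover of degree $>1$ --- fails to be \'etale for every $i$.

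I would then conclude by the theorem preceding \autoref{thm.ApplicationFundamentalGroups}: since the compositions $g_i\colon X_i\to S$ form a tower of Galois quasi-\'etale covers of log terminal surfaces, $f_i$ must be \'etale for all $i\gg 0$, contradicting the previous paragraph. Equivalently, after shrinking $S$ to an affine neighborhood of a singular point over which infinitely many $f_i$ are not \'etale and stripping the common term $\{S_{\reg}/G_i\}$ off of \autoref{eqn.GlobalvsLocalSurfaceCaseManyPoints}, \autoref{thm.StringyInvariantGoesDown} produces a non-terminating strictly decreasing chain of local stringy motives in $\sA_r^2$ (with $r$ the Gorenstein index of $S$), which contradicts \autoref{prop:DCC}. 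Either way $\piet(S_{\reg})$, and therefore $\piet(U)$, is finite. One may alternatively read this as assembling the local fundamental groups of $S$, finite by \autoref{thm.ApplicationFundamentalGroups}, with the triviality of $\piet(S)$ that comes from rationality.

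The homotopy-theoretic bookkeeping aside, the point that needs care --- and which I expect to be the main obstacle --- is the package of geometric inputs in positive characteristic: that the class of weak del Pezzo surfaces is stable under crepant quasi-\'etale covers, that such surfaces are rational, and that a smooth projective rational surface has trivial \'etale fundamental group in every characteristic (via birational invariance of $\piet$ for smooth proper surfaces together with $\piet(\bP^2)=1$); together with the small commutative-algebra verification that a quasi-\'etale cover of a klt surface is automatically crepant with vanishing ramification divisor, which is exactly what licenses the appeal to \autoref{thm.StringyInvariantGoesDown}.
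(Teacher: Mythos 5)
Your argument is correct and is essentially the proof the paper delegates to \cite[Theorem 2.6]{CarvajalRojasStablerKollarFundamentalGroupKLTthreefolds}: reduce to $\piet(S_{\reg})$ via purity, observe that the normalized covers in a tower of Galois quasi-\'etale covers of a klt weak del Pezzo surface are again klt weak del Pezzo, hence rational and with trivial \'etale fundamental group as proper varieties, so none of the covers can be \'etale, contradicting the tower theorem (equivalently \autoref{thm.StringyInvariantGoesDown} together with \autoref{prop:DCC}). Only your closing ``alternative reading''---assembling the finite local fundamental groups from \autoref{thm.ApplicationFundamentalGroups} with $\piet(S)=1$---should not be mistaken for a proof on its own (the normal closure of finitely many finite subgroups of $\piet(S_{\reg})$ need not be finite); the tower argument you actually run is the correct mechanism.
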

\begin{proof}
See \cite[Theorem 2.6]{CarvajalRojasStablerKollarFundamentalGroupKLTthreefolds} and its proof.
\end{proof}

\begin{remark}\label{rem.FundamentalGropuSurfaceCase}
To the best of the author's knowledge, the results of this section were unknown at least in characteristics $2$ and $3$. In characteristic $p>5$, one may use that log terminal surfaces are strongly $F$-regular and use \cite{CarvajalSchwedeTuckerEtaleFundFsignature,BhattCarvajalRojasGrafSchwedeTucker}. In characteristic $p=5$, one may use that the canonical cover of log terminal surface singularities is a rational point; see \cite{KawamataIndex1CoversOfLogTerminalSurfaceSingularities,ArimaOnIndexOneCoversOfTwoDimensionalPLTSingsInPosChar}, and then use Artin's explicit description of them in \cite{ArtinCoveringsOfTheRtionalDoublePointsInCharacteristicp} to conclude. In characteristics $p=2,3$, we know the canonical cover trick fails by \cite{KawamataIndex1CoversOfLogTerminalSurfaceSingularities} hence a new, different approach was needed. Our methods addressed these cases with the salient feature of providing a unified approach which is conceptual and independent of the characteristic of the groundfield. For instance, it gives a conceptual proof for the finiteness of the \'etale local fundamental group of rational double points in the low characteristics, which Artin had accomplished only on a case-by-case classification analysis \cite{ArtinCoveringsOfTheRtionalDoublePointsInCharacteristicp}. 
\end{remark}

\section{Mixed Characteristic Surface Case} \label{sec.Mixed_characteristic_surface case}

In this section, we aim to establish \autoref{thm.ApplicationFundamentalGroups} in the mixed characteristic case as well. We may use the exact same proof as long as we establish \autoref{thm.StringyInvariantGoesDown} in mixed characteristics. The first problem we face with this is that there is no suitable theory of motivic integration in mixed characteristic; see \cite[Problem 10.1]{YasudaOpenProblems}. Therefore, we cannot see the strict descent of stringy motives formally as the non-liftability of arcs as we did in \autoref{sec.StrictDecentRamification}. However, the surface case is simple enough for this descent to be seen by direct analysis, which is what we do in this section.

Let $(R,\fram,\kay,K)$ be a $2$-dimensional complete normal integral domain with algebraically closed residue field $\kay$ and field of fractions $K$. Suppose that it has mixed characteristic $(\Char K =0,\Char\kay = p>0)$. In that case, $R$ admits a canonical module which is a reflexive module of rank $1$ and unique up to isomorphism. For instance, we may use that $R$ is a finite extension over $A= \Lambda\llbracket t \rrbracket $ where $(\Lambda, (p), \kay)$ is a complete DVR (see \cite[\href{https://stacks.math.columbia.edu/tag/032D}{Tag 032D}]{stacks-project}) to set $\omega_R \coloneqq \omega_{R/A} \coloneqq \Hom_A(R,A)$. We let $K_X$ denote a corresponding canonical divisor on $X \coloneqq \Spec R$. The choice of $K_X$ is unique up to linear equivalence and so the canonical class $K_X\in \Cl X$ is well-defined. Further, we write $x \coloneqq \Spec \kay$ and identify it with the closed point of $X$. 

We will assume that $X$ is $\bQ$-Gorenstein, i.e. $K_X \in \Cl X$ is torsion, and denote by $r \geq 1$ the index of $K_X$ in $\Cl X$. In particular, we may define log discrepancies with respect to a minimal resolution and define $(x,X)$ as log terminal if these are positive. In doing this, we are setting our base to be $A=\Lambda\llbracket t \rrbracket$; with notation as in the previous paragraph. 

\begin{remark}
Sometimes one considers a relative canonical module $\omega_{R/B}$ with respect to a different base $B$. For example, one may consider a scheme $Y$ of finite type over a complete DVR $B$ with an algebraically closed residue field and suppose that $R$ is the complete local ring $\hat{\sO}_{Y,y}$ at a closed point $y$. Then, the relative canonical module is defined as $\omega_{R/B}=\omega_{Y/B}\otimes \hat{\sO}_{Y,y}$. However, whether we use  $\omega_R$ or $\omega_{R/B}$ does not make any change to the following argument; for we get the same discrepancies in either way. See ``Comments'' on pages 7 and 8 of  \cite{KollarSingulaitieofMMP}. In what follows, all canonical divisors are defined with respect to $A$ as above. However, as in \cite{KollarSingulaitieofMMP}, we will omit the base in our notation for canonical divisors.
\end{remark}

Suppose that $(x,X)$ is a log terminal singularity and let $\phi \colon\tilde{X} \to (x,X)$ be the minimal resolution. Then, we \emph{define} $M_{\st}(X)_x$ by the explicit formula in \autoref{ex.Surfaces}. Namely,
\[
M_{\st}(X)_x \coloneqq \sum_{i\in I} (\bL +1 - m_i) \frac{\bL-1}{\bL^{a_{i}}-1}+\sum_{[i,j]\in H}\frac{(\bL-1)^{2}}{(\bL^{a_i}-1)(\bL^{a_j}-1)} \in \bZ\llparenthesis\bL^{-1/r} \rrparenthesis \subset \hat{\sM}'_{\kay,r},
\]
where we use the exact same notation as in \autoref{ex.Surfaces}. Recall that $a_i \in\frac{1}{r}\bZ \cap (0,1]$ as $(x,X)$ is log terminal with Gorenstein index $r$. 
 
Likewise, if $G$ is a finite (discrete) group acting on $X$ and fixing $x$, we may further define the quotient stringy invariant $M_{\st}(X)_x/G$ using the modified minimal log resolution as explained in \autoref{sec.ExplicitDescriptionSurfaceCase}. That is, we take \autoref{eqn.DescriptionOfStringyInvariantsForCurves} as the definition of $M_{\st}(X)_x/G$ in mixed characteristics:
\[
M_{\st}(X)_x/G \coloneqq \sum_{\iota\in I/G} (\bL +1 - m_{\iota}) \frac{\bL-1}{\bL^{a_{\iota}}-1}+\sum_{[\iota,\kappa]\in H/G}\frac{(\bL-1)^{2}}{(\bL^{a_{\iota}}-1)(\bL^{a_{\kappa}}-1)} \in \bZ\llparenthesis\bL^{-1/r} \rrparenthesis \subset \hat{\sM}'_{\kay,r}.
\]

For the sake of concreteness, we have used minimal resolutions and minimal $G$-normal resolutions to define the stringy motives above. However, using strong factorization, we may take any log resolution as the following lemma makes precise.
\begin{lemma}\label{lemma.StringyMotivesAreWellDefined}
With notation as above, let $\psi \colon Y \to (x,X)$ be a log resolution and $\Gamma$ be the corresponding dual graph with set of vertices $I$ and set of edges $H$. Further, assume that $E_i$ has log discrepancy $a_i$ for all $i \in I$. Then,
\[
M_{\st}(X)_x=\sum_{i\in I} (\bL +1 - m_i) \frac{\bL-1}{\bL^{a_{i}}-1}+\sum_{[i,j]\in H}\frac{(\bL-1)^{2}}{(\bL^{a_i}-1)(\bL^{a_j}-1)},
\]
where $m_i$ is the number of edges of $\Gamma$ sticking out of $i$. Suppose further that $\psi$ is a $G$-normal log resolution, then
\[
M_{\st}(X)_x/G=\sum_{\iota\in I/G} (\bL +1 - m_{\iota}) \frac{\bL-1}{\bL^{a_{\iota}}-1}+\sum_{[\iota,\kappa]\in H/G}\frac{(\bL-1)^{2}}{(\bL^{a_{\iota}}-1)(\bL^{a_{\kappa}}-1)},
\]
where $m_{\iota}$ is the number of edges of $\Gamma/G$ sticking out of $\iota$ and $a_{\iota} = a_i$ for all $i \in I$.
\end{lemma}
\begin{proof}
Any two log resolutions are related by a sequence of blowups at a point (strong factorization). Thus, it suffices to consider $\theta \colon Y' \to Y$ the blowup of $Y$ at an edge of $\Gamma$ and prove that our formula remains invariant when computed using the log resolution $Y' \to Y \to (x,X)$. Note that the dual graph of $Y'$, say $\Gamma'$, is obtained from $\Gamma$ by replacing the edge that is blown up, say $[i,j]$, by $[i,k]\cup [k,j]$ where $k$ is the new vertex of $\Gamma'$ corresponding to the exceptional divisor of $Y' \to Y$, whose log discrepancy over $X$ is $a_k=a_i+a_j$. All other discrepancies remain unchanged. In particular, it suffices to prove that
\[
(\bL-1) \frac{\bL-1}{\bL^{a_k}-1} + \frac{(\bL-1)^2}{(\bL^{a_i}-1)(\bL^{a_k}-1)}+\frac{(\bL-1)^2}{(\bL^{a_k}-1)(\bL^{a_j}-1)} = \frac{(\bL-1)^2}{(\bL^{a_i}-1)(\bL^{a_j}-1)},
\]
which is a straightforward consequence of the equality $a_k=a_i+a_j$.
\end{proof}

We aim to prove the following.

\begin{theorem} \label{thm.MainTheoremMixChar}
Let $(x,X)$ be a log terminal surface singularity as above and $G$ be a nontrivial finite group acting on $X$ and fixing $x$. Let $\bar{x}$ be the image of $x$ on the quotient $X/G$. Assume that the quotient morphism $f\colon (x,X) \to (\bar{x},X/G)$ is \'etale away from $\bar{x}$. Then,
\[
M_{\st}(X/G)_{\bar{x}}>M_{\st}(X)_x/G,
\]
with respect to the lexicographic order in $\bZ\llparenthesis \bL^{-1/r}\rrparenthesis$.
\end{theorem}

As a corollary, we obtain the following.

\begin{corollary} \label{cor.FinitenessFUndGrouosMixChar}
Let $(x,X)$ be a log terminal surface singularity as above. Then, the \'etale fundamental group $\pi_1^{\mathrm{\Acute{e}t}}(X\setminus \{x\})$ is finite.
\end{corollary}
\begin{proof}
The same arguments in \autoref{sec.DCCSurfaces} shows that the set $\sA^2_{\kay,r} \subset \hat{\sM}'_{\kay,r}$ of stringy motives $M_{\st}(X)_x/G$ such that $(X,x)$ is a log terminal singularity with $rK_X = 0 \in \Cl X$ and admitting an action of a finite group $G$ fixing $x=\Spec{\kay}$ satisfies a descending chain condition (DCC).

If $\pi_1^{\mathrm{\Acute{e}t}}(X\setminus \{x\})$ were not finite, then there would be an infinite chain of finite Galois local covers $(x,X) \leftarrow (x_1,X_1) \leftarrow (x_2,X_2) \leftarrow \cdots $ such that $X_{i+1}\setminus \{x_{i+1}\} \to X_i \setminus \{x_i\}$ is \'etale but $X_{i+1} \to X_i$ is not \'etale. In particular, the Galois group $1\neq \Gal(X_i/X)$ acts on $X_i$ fixing $x_i$ and the quotient is $(x,X)$. Further, every $(x_i,X_i)$ is a log terminal surface singularity with $x_i = \Spec \kay$ and such that $rK_{X_i} = 0\in \Cl X_i$. Then, by applying \autoref{thm.MainTheoremMixChar}, we obtain an strictly ascending chain
\[
M_{\st}(X)_x > M_{\st}(X_1)_{x_1}/G_1>M_{\st}(X_2)_{x_2}/G_2 > \cdots
\]
violating the DCC on $\sA^2_{\kay,r}$.
\end{proof}

The rest of this section is dedicated to the proof of \autoref{thm.MainTheoremMixChar}. Since we cannot use motivic integration to compare $M_{\st}(X/G)_{\bar{x}}$ and $M_{\st}(X)_x/G$, we have to compare the explicit formulas defining them. However, the main difficulty in making this comparison is that the $G$-quotient of a $G$-normal log resolution $\phi \colon \Tilde{X} \to (x,X)$ is not a log resolution of $X/G$. Our strategy to bypass this issue is to compare stringy motives along some constructible subsets and then take limits. To this end, we shall need some auxiliary stringy motives along closed subsets. First, we explain the setup in which discrepancies can be compared directly.

\subsection{Comparing discrepancies}
Consider the following commutative square between normal integral schemes
\[
\xymatrix{
\Tilde{W} \ar[d]_-{\phi} & \Tilde{X} \ar[d]^-{\psi} \ar[l]_-{\Tilde{f}} \\
(w,W) & \ar[l]_-{f} (x,X)
}
\]
where $(w,W)$ is a log terminal surface singularity, $f$ is a quasi-\'etale cover of surface singularity germs, and the vertical arrows are proper birational morphisms. When $f$ is a Galois cover, we also assume that the Galois action on $X$ lifts to $\Tilde{X}$.
Let $F\subset \Tilde{X}$ be a prime divisor over $X$ and let $E\subset \Tilde{W}$ be its image. Suppose that these divisors have (non-log) discrepancies $b_F$ and $b_E$ over $X$ and $W$; respectively. 

\begin{proposition}\label{prop:compare-discreps}
With notation as above, suppose that either 
    \begin{enumerate}
        \item the free $\Hat{\sO}_{\Tilde{W},\eta_E}$-module $\Hat{\sO}_{\Tilde{X},\eta_F}$ has prime-to-$p$ rank, or
        \item $f$ is a Galois cover.
    \end{enumerate}
Then, $b_F\ge b_E$. Moreover, if $\Tilde{f}$ is ramified along $F$, then $b_F > b_E$. 
\end{proposition}

\begin{proof}
Concerning the divisors appearing in this proof, we are mainly interested in the coefficients of $F$ and $E$ so that we will omit other prime divisors by writing ``$\cdots$''. 

We can write 
\[
K_{\Tilde{X}}= \psi^* K_X + b_F F + \cdots
\]
and 
\[
K_{\Tilde{W}}= \phi^* K_W + b_E E + \cdots.
\]
Let $e$ be the ramification index at $F$. Namely, the uniformizer of $\Hat{\sO}_{\Tilde{W},E}$ has order $e$ with respect to the normalized valuation of $\Hat{\sO}_{\Tilde{X},F}$. Let $\delta$ denote the different at $F$, which is defined to be the length of the $\Hat{\sO}_{\Tilde{X},F}$-module $\hat{\Omega}_{\Tilde{X}/\Tilde{W},F}$. 
Since $f$ is quasi-\'etale, pulling back the second equality gives
\[
K_{\Tilde{X}} = \Tilde{f}^*K_{\Tilde{W}} + \delta F+\cdots 
= \psi ^* K_X + (eb_E+\delta) F+\cdots.
\]
 Comparing the coefficients of $F$, we get
\[
b_F = eb_E + \delta. 
\]

\begin{proof}[Proof of case (a)]
In this tame case, it is well-known that $\delta=e-1$. It follows that 
\[
b_F+1=e(b_E+1) \ge b_E +1. 
\]
Moreover, if $\Tilde{f}$ is ramified along $F$, then $e>1$ and hence $b_F+1 > b_E +1$. The assertion follows.
\end{proof}

\begin{proof}[Proof of case (b)]
From Hyodo's formula \cite[(1-4)]{HyodoWildImperfect}, we have
\[
\delta = e -1 + d_L(M/L),
\]
where $M$ and $L$ are the fraction fields of $\Hat{\sO}_{\Tilde{X},\eta_F}$ and $\Hat{\sO}_{\Tilde{W},\eta_E}$, and $d_L(M/L) \in \bN$ is the depth of ramification of the extension $M/L$. We have that $d_L(M/L)=0$ if and only if $\Tilde{f}$ is tamely ramified along $F$. Let $G=\Gal(L/K)$ be the Galois group.

 \emph{The case $G\cong \bZ/p$:} Let $k_F$ and $k_E$ be the residue fields at $F$ and $E$ respectively. This case is further divided into three subcases according to ramification type: unramified ($e=1$ and $k_F/k_E$ is separable), wildly ramified ($e=p$ and $[k_F:k_E]=1$), and fiercely ramified ($e=1$ and $k_F/k_E$ is inseparable). The unramified case is contained in the tamely ramified case (a).
 In the other two cases, from \cite[Section 2.1]{XiaoZhukovRamifLocalFields}, $\delta \ge p-1$.
 In the wildly ramified case, 
 \[
b_F = pb_E +\delta \ge 
p(b_E+1)-1,
 \]
 and
 \[
 b_F+1 > b_E+1,
 \]
as $b_E+1>0$ from the log terminal condition. In the fiercely ramified case, 
 \[
 b_F-b_E = \delta =d_L(M/L)>0. 
 \]
 Thus we get the desired assertion in every subcase.
 
 \emph{The case $G$ is a $p$-group:} Suppose that $G$ has order $p^{a}$ with $a>0$. Let $C\subset G $ be a central group of order $p$. It is well-known that every $p$-group admits such a subgroup. Suppose that the assertion holds when the Galois group is a $p$-group of order $\le p^{a-1}$. Let $F'$ be the image of $F$ in $\Tilde{X}/C$. From the degree-$p$ case and the induction hypothesis, we get 
 \[
 b_E \le b_{F'} \le b_F.
 \]
 If $\Tilde{X}\to \Tilde{W}$ is ramified along $F$, then either $\Tilde{X}\to \Tilde{X}/C$ is ramified along $F$ or $\Tilde{X}/C\to \Tilde{W}$ is ramified along $F'$. Thus, at least one of the above intermediate inequalities is strict, and hence $b_E<b_F$, if $\Tilde{X}\to \Tilde{W}$ is ramified along $F$. We have proved the assertion in the case of $p$-groups.
 
 \emph{The general Galois case:} Let $H \subset G$ be the stabilizer of $F$ and $S\subset H$ be a Sylow $p$-subgroup. Let $F'$ be the image of $F$ in $\Tilde{X}/S$. From the $p$-group case and the tame (non-Galois) case, we get 
 \[
 b_E \le b_{F'} \le b_F.
 \]
In the ramified case, we get the strict inequality $b_E<b_F$ as in the last case. We have completed the proof of (b).
\end{proof}
This concludes the proof of \autoref{prop:compare-discreps}.
\end{proof}

\subsection{Tweak by constructible subsets}

In this subsection, we define a version of stringy motives, denoted by $M_{\st}(X)_{x,C}/G$, in the following situation. We consider a $G$-equivariant proper birational morphism $\phi \colon \Tilde{X} \to (x,X)$ and a $G$-invariant constructible subset $C \subset \phi^{-1}(x)$ such that
$(\Tilde{X},\phi^{-1}(x))$ is a $G$-normal SNC pair in a neighborhood of $C$. We write $\phi^{-1}(x)=\bigcup_{i\in I} E_i$ as usual, where $E_i$ are prime divisors. Following the notation in  \autoref{lem.SurfaceCaseDescription} and the paragraph before it, 
we define:
\begin{equation}\label{eq.def-formula}
\begin{split}
M_{\st}(X)_{x,C}/G 
\coloneqq & \sum_{\iota\in I/G}\{(C \cap E_\iota^\circ)/G\}\frac{\bL-1}{\bL^{a_{\iota}}-1} \\ 
&+\sum_{\{\iota, \kappa\}\subset I/G}\{(C\cap E_\iota\cap E_\kappa)/G\}\frac{(\bL-1)^{2}}{(\bL^{a_{\iota}}-1)(\bL^{a_{\kappa}}-1)}
\end{split}
\end{equation}
Note that curves $E_i$ as well as their quotients by finite group actions are all rational. Thus, their classes in our Grothendieck ring are of the form $\bL+n$ with $n$ an integer. Therefore, the above invariant belongs to $\bZ\llparenthesis\bL^{-1/r} \rrparenthesis $ with $r$ the Gorenstein index of $X$ or any multiple of it. 
It should be noted that if $\phi$ is a $G$-normal log resolution, then
\[
M_{\st}(X)_{x}/G = M_{\st}(X)_{x,\phi^{-1}(x)}/G.
\]
As a special case, we define $M_{\st}(X)_{x,C} \coloneqq M_{\st}(X)_{x,C}/\{e\}$ where $\{e\}$ is the trivial group. Thus, we may readily generalize \autoref{lemma.StringyMotivesAreWellDefined} as follows:
\begin{lemma}
Consider the following commutative diagram of $G$-equivariant proper birational morphisms:
\[
\xymatrix{
\Tilde{X} \ar[dr]_-{\phi} & & \Tilde{X}' \ar[dl]^-{\phi'} \ar[ll]_-{\psi}\\
& (x,X) &
}
\]
Let $C \subset \phi^{-1}(x)$ be a $G$-invariant constructible subset. Suppose that $(\Tilde{X},\phi^{-1}(x))$ is a $G$-normal SNC pair in a neighborhood of $C$ and $(\Tilde{X}',\phi'^{-1}(x))$ is a $G$-normal SNC pair in a neighborhood of $\psi^{-1}(C)$. Then, the following equality holds:
\[
M_{\st}(X)_{x,C}/G = M_{\st}(X)_{x,\psi^{-1}(C)}/G.
\]
\end{lemma}

The following lemma is a direct consequence of the definition.

\begin{lemma}
Let $\phi \colon \Tilde{X} \to (x,G)$ be a $G$-equivariant proper birational morphism and let $C_{\lambda} \subset \phi^{-1}(x)$ be pairwise disjoint $G$-invariant constructible subsets. Suppose that $(\Tilde{X},\phi^{-1}(x))$ is a $G$-normal SNC pair in a neighborhood of $\bigsqcup_\lambda C_\lambda$. Then
\[
M_{\st}(X)_{x,\bigsqcup_{\lambda} C_{\lambda}}/G = \sum_{\lambda} M_{\st}(X)_{x,C_{\lambda}}/G.
\]
\end{lemma}

\subsection{Approximation by constructible subsets}
The following construction will let us approximate $M_{\st}(X/G)_{\bar{x}}$ and $M_{\st}(X)_x/G$ by stringy motives of the form $M_{\st}(X/G)_{\bar{x},\overline{C}}$ and $M_{\st}(X)_{x,C}/G$ where the inequality $M_{\st}(X/G)_{\bar{x},\overline{C}} \geq M_{\st}(X)_{x,C}/G$ holds.
We first construct the following diagram in a way explained below:
\begin{equation}\label{dia:ladder}
\xymatrix@C=3em{
X=X_0 \ar[d]^-{f} & X_i \ar[d] \ar@{..>}[l] & Y_i \ar[d] \ar[l] & X_{i+1} \ar[d] \ar[l] &  \ar@{..>}[l]  \\
X/G=X_0/G & X_i/G \ar@{..>}[l] & Y_i/G \ar[l] & X_{i+1}/G \ar[l] &  \ar@{..>}[l]
}
\end{equation}
Suppose that we have constructed up to $X_i$ and $X_i/G$. We then take a log resolution $Y_i/G\to X_i/G$ and define $Y_i$ as the normalization of the fiber product $X_i \times_{X_i/G} Y_i/G$ so that $Y_i/G$ is the quotient of $Y_i$ by the induced $G$-action as the notation suggests. Then, we take a $G$-normal log resolution $X_{i+1} \to Y_i$ and define $X_{i+1}/G$ to be its quotient. We suppose that the log resolutions $Y_i/G\to X_i/G$ and $X_{i+1} \to Y_i$ are isomorphisms over the SNC loci of the targets paired with the exceptional loci of $X_i/G\to X/G$ and $Y_i \to X$; respectively. Repeating this procedure produces a diagram as above. 

Let $B_i\subset X_i$ be the exceptional locus of $X_i \to X$ and let $\bar{B}_i\subset X_i/G$ be its image. 
Let $\bar{C}_i\subset \bar{B}_i$ be the largest open subset along which the pair $(X_i/G,\bar{B}_i)$ is SNC.  Let $C_i$ be the preimage of $\bar{C}_i$ in $X_i$, which is an open subset of $B_i$. From the construction, $X_{i+1}\to X_i$ is an isomorphism over $C_i$. Observe that $\bar{B}_i \setminus \bar{C}_i$ and ${B}_i \setminus {C}_i$ are finite sets of closed points. 

\begin{lemma}\label{lem.five-assertions}
With the above notation, the following statements hold:
\begin{enumerate}
    \item $M_{\st}(X/G)_{\bar{x},\bar{C}_i} \geq M_{\st}(X)_{x,C_i}/G$
    \item $M_{\st}(X/G)_{\bar{x}} = \lim_{i \rightarrow \infty} M_{\st}(X/G)_{\bar{x},\bar{C}_i}$
    \item $M_{\st}(X)_{x}/G = \lim_{i \rightarrow \infty} M_{\st}(X)_{x,C_i}/G$
    \item $M_{\st}(X/G)_{\bar{x}} \geq M_{\st}(X)_{x}/G$
    \item If $X_i \to X_i/G$ is ramified along a prime divisor $E\subset X_i$, then $M_{\st}(X/G)_{\bar{x},\bar{C}_i} > M_{\st}(X)_{x,C_i}/G$. Moreover, if $E$ has log discrepancy $a$ relative to $X$, then
    \[
    \dim M_{\st}(X/G)_{\bar{x},\bar{C}_i} - M_{\st}(X)_{x,C_i}/G \ge 2-a.
    \]
\end{enumerate}
\begin{proof}
The statement (a) follows from the defining formula \autoref{eq.def-formula} of a stringy motive and the comparison of discrepancies in \autoref{prop:compare-discreps}.

For (b), we first note that the ring $\hat{\sM}'_{\kay,r}$, where our stringy motives live, is the completion of a ring denoted by $\sM'_{\kay,r}$ with respect to a descending filtration $\{F_m\}_{m\in (1/r)\bZ}$ \cite[p.~222]{YasudaMotivicIntegrationDeligneMumfordStack}. Hence, $\hat{\sM}'_{\kay,r}$ is complete with respect to the filtration $\{\hat F_m\}_{m\in (1/r)\bZ}$, where $\hat F_m := \varprojlim _{m'\ge m} F_m /F_{m'}$. Thus, to show (b), it suffices to show that for any $m$, if $i$ is sufficiently large, then 
\[
\big( M_{\st}(X/G)_{\bar{x}}-M_{\st}(X/G)_{\bar{x},\bar{C}_{i+1}} \big) \in \hat F_m. 
\]
In turn, proving it is reduced to proving that for each $i$, 
\begin{equation}\label{eq.2}
\dim \big( M_{\st}(X/G)_{\bar{x}}-M_{\st}(X/G)_{\bar{x},\bar{C}_{i+1}} \big) < 
\dim \big(M_{\st}(X/G)_{\bar{x}}-M_{\st}(X/G)_{\bar{x},\bar{C}_i}\big).
\end{equation}
Since the preimage of $\bar{C}_i$ in $X_{i+1}/G$ is contained in  $\bar{C}_{i+1}$,
\[
M_{\st}(X/G)_{\bar{x},\bar{C}_{i+1}}\ge M_{\st}(X/G)_{\bar{x},\bar{C}_i}.
\]
Let $D\subset Y_i/G$ be the preimage of $\bar{B}_i\setminus \bar{C}_i$, which has pure dimension 1. We have
\[
M_{\st}(X/G)_{\bar{x}}-M_{\st}(X/G)_{\bar{x},\bar{C}_i} = M_{\st}(X/G)_{\bar{x},D}.
\]
Let $D^\circ$ be the open dense subset of $D$ obtained by removing the image of 
$\bar{B}_{i+1}\setminus \bar{C}_{i+1}$ from $D$. Since $\bar{C}_i$ and $D^\circ$ are disjoint and the preimage of their union in $X_{i+1}/G$ is contained in $\bar{C}_{i+1}$, we have
\[
M_{\st}(X/G)_{\bar{x},\bar{C}_{i+1}} \ge M_{\st}(X/G)_{\bar{x},\bar{C}_{i}} + M_{\st}(X/G)_{\bar{x},D^\circ}.
\]
Since the preimage of $D\setminus D^\circ$ in $X_{i+1}/G$ contains $\bar{B}_{i+1}\setminus \bar{C}_{i+1}$, we have
\[
M_{\st}(X/G)_{\bar{x}}-M_{\st}(X/G)_{\bar{x},\bar{C}_{i+1}}
=M_{\st}(X/G)_{\bar{x},\bar{B}_{i+1}\setminus \bar{C}_{i+1}} 
\le  M_{\st}(X/G)_{\bar{x},D\setminus D^\circ}.
\]
Note that for every $y\in D \setminus D^\circ$, 
\begin{equation}\label{eq.1}
    \dim M_{\st}(X/G)_{\bar{x},y} < \dim M_{\st}(X/G)_{\bar{x},D}.
\end{equation}
If $y$ is a smooth point of $B_{i+1}$ and if $a$ denotes the log discrepancy at the prime divisor containing $y$, then for some integer $n$, we have
\[
\dim M_{\st}(X/G)_{\bar{x},y}= \dim \frac{\bL-1}{\bL^{a}-1} < \dim (\bL + n) \frac{\bL-1}{\bL^{a}-1} \le \dim M_{\st}(X/G)_{\bar{x},D}.
\]
If $y$ is a node of $B_{i+1}$ and if $a$ and $b$ are the log discrepancies at the two prime divisors containing $y$, then for some integer $n$, we have
\[
\dim M_{\st}(X/G)_{\bar{x},y}= \dim \frac{(\bL-1)^2}{(\bL^{a}-1)(\bL^{b}-1)} < \dim (\bL + n) \frac{\bL-1}{\bL^{a}-1} \le \dim M_{\st}(X/G)_{\bar{x},D}.
\]
Thus, in either case, \autoref{eq.1} is valid, and \autoref{eq.2} follows as desired.
Point (c) can be shown as in (b). 

To show the  statement (d), fixing $i$, we write $B_i = \bigcup_{j\in I} E_j$. Then, we have $\bar{B}_i = \bigcup_{\iota \in I/G} \bar{E}_\iota$, where $\bar{E}_\iota$ is the image of $E_\iota$ in $X_i/G$.
Note that the natural morphism $E_\iota/G\to \bar{E}_\iota$ is a universal homeomorphism.
Let $\bar{E}_\iota^\circ:=\bar{E}_\iota \setminus \bigcup _{\kappa \ne \iota} \bar{E}_\kappa$ and let $a_\iota$ (resp.\ $\bar{a}_\iota$) be the discrepancy at $E_\iota$ (resp.\ $\bar{E}_\iota$) relative to $X$ (resp.\ $X/G$).
Then, from the definition, $M_{\st}(X)_{x,C_i}/G$ is written as
\begin{align}\label{eq.surf1}
 \sum_{\iota\in I/G}\{(C_i \cap E_\iota^\circ)/G\}\frac{\bL-1}{\bL^{a_{\iota}}-1} 
+\sum_{\{\iota, \kappa\}\subset I/G}\{(C_i\cap E_\iota\cap E_\kappa)/G\}\frac{(\bL-1)^{2}}{(\bL^{a_{\iota}}-1)(\bL^{a_{\kappa}}-1)} ,
\end{align}
while $M_{\st}(X/G)_{x,\bar{C}_i}$ is written as
\begin{equation}\label{eq.surf2}
\begin{aligned}
& \sum_{\iota\in I/G}\{\bar{C}_i \cap \bar{E}_\iota^\circ\}\frac{\bL-1}{\bL^{\bar{a}_{\iota}}-1} 
+\sum_{\{\iota, \kappa\}\subset I/G}\{\bar{C}_i\cap \bar{E}_\iota\cap \bar{E}_\kappa\}\frac{(\bL-1)^{2}}{(\bL^{\bar{a}_{\iota}}-1)(\bL^{\bar{a}_{\kappa}}-1)}\\
& = \sum_{\iota\in I/G}\{(C_i \cap E_\iota^\circ)/G\}\frac{\bL-1}{\bL^{\bar{a}_{\iota}}-1} 
+\sum_{\{\iota, \kappa\}\subset I/G}\{(C_i\cap E_\iota\cap E_\kappa)/G\}\frac{(\bL-1)^{2}}{(\bL^{\bar{a}_{\iota}}-1)(\bL^{\bar{a}_{\kappa}}-1)}.
\end{aligned}
\end{equation}
Here, the last equality holds, for the maps $(C_i \cap E_\iota ^\circ)/G\to \bar{C}_i\cap \bar{E}_\iota^\circ$ are universal homeomorphisms.
Thus, the only difference in these formulas for $M_{\st}(X)_{x,C_i}/G$ and $M_{\st}(X/G)_{x,\bar{C}_i}$ is the exponents $a_\iota$ and $\bar{a}_\iota$. From \autoref{prop:compare-discreps}, we have $a_\iota \ge \bar{a}_\iota$,  which shows that $M_{\st}(X/G)_{x,\bar{C}_i}\ge M_{\st}(X)_{x,C_i}/G$. Taking  limits as $i\to \infty$, we get the statement (d). 

As noted just before this lemma, $B_i\setminus C_i$ consists of finitely many points and hence contains the generic point of every irreducible component of $B_i$, in particular, the generic point of a prime divisor where $X_i \to X_i/G$ is ramified. 
From \autoref{prop:compare-discreps}, we have strict inequality $a = a_\iota > \bar{a}_\iota $ for $\iota \in I/G$ such that $E\subset E_\iota$. 
From \autoref{eq.surf1} and \autoref{eq.surf2}, we have $M_{\st}(X/G)_{x,\bar{C}_i}> M_{\st}(X)_{x,C_i}/G$ and
\[
\dim\big(M_{\st}(X/G)_{\bar{x},\bar{C}_i} - M_{\st}(X)_{x,C_i}/G\big) \ge 
\dim \{(C_i \cap E_\iota^\circ)/G\}\frac{\bL-1}{\bL^a-1} = 2-a.
\]
We have proved the last statement (e). 
\end{proof}
\end{lemma}

\subsection{Proof of \autoref{thm.MainTheoremMixChar}}

In this subsection, we prove \autoref{thm.MainTheoremMixChar}. 
Let $(x,X)\to (\bar{x},X/G)$ as in the theorem. We then construct diagram \autoref{dia:ladder} and follow the notation there.

\begin{lemma}
    There exists a prime divisor $F \subset \bar{B}_1  \subset Y_1/G$ along which $Y_1\to Y_1/G$ is ramified.
\end{lemma}

\begin{proof}
 This is basically \autoref{pro.FirstKeyproposition} for surfaces in mixed characteristic. The same proof as ``Elementary proof of \autoref{pro.FirstKeyproposition} for surfaces'' shows the lemma. 
\end{proof}

We fix a divisor $F$ as in the last lemma. Let $a>0$ be its log discrepancy. For every $i$, the map $X_i \to X_i/G$ is ramified along the strict transform of $F$. 
From (d) of \autoref{lem.five-assertions}, 
\[
\dim\big(M_{\st}(X/G)_{\bar{x},\bar{C}_i} - M_{\st}(X)_{x,C_i}/G\big) \ge 
 2-a.
\]
On the other hand, 
from (b) and (c) of \autoref{lem.five-assertions}, if we take sufficiently large $i$, then 
\begin{align*}
2-a &>\dim \big(M_{\st}(X/G)_{\bar{x}}-M_{\st}(X/G)_{\bar{x},\bar{C}_i} \big),\\
2-a &>\dim \big(M_{\st}(X)_{x}/G-M_{\st}(X)_{x,C_i}/G \big).
\end{align*}
These inequalities together with 
\begin{align*}
M_{\st}(X/G)_{\bar{x}} - M_{\st}(X)_{x}/G
={}&
(M_{\st}(X/G)_{\bar{x},\bar{C}_i} - M_{\st}(X)_{x,C_i}/G) \\
&+(M_{\st}(X/G)_{\bar{x}}-M_{\st}(X/G)_{\bar{x},\bar{C}_i} ) \\
&+(M_{\st}(X)_{x}/G-M_{\st}(X)_{x,C_i}/G )
\end{align*}
show that $M_{\st}(X/G)_{\bar{x}} - M_{\st}(X)_{x}/G$ and 
$
M_{\st}(X/G)_{\bar{x},\bar{C}_i} - M_{\st}(X)_{x,C_i}/G$
have the same leading term, which has a positive coefficient. Thus, $M_{\st}(X/G)_{\bar{x}} > M_{\st}(X)_{x}/G$; as desired.

\bibliographystyle{skalpha}
\bibliography{MainBib}

\end{document}